\documentclass[10pt]{amsart}

\usepackage{amssymb, amsmath}
\usepackage{diagbox}

\usepackage{tikz, tikz-cd}
\usepackage{enumitem}
\usepackage{mathtools}
\usepackage{todonotes}

\usepackage{hyperref}
\hypersetup{
    colorlinks=true,
    citecolor=blue,
    linkcolor=blue,
    filecolor=magenta,      
    urlcolor=blue,
}

\theoremstyle{plain}
 \newtheorem{thm}{Theorem}[section]
 
 \newtheorem{cor}[thm]{Corollary}
\newtheorem{lem}[thm]{Lemma}
\newtheorem{prop}[thm]{Proposition}

\theoremstyle{definition}
\newtheorem{defn}[thm]{Definition}

\newtheorem{rmk}[thm]{Remark}
\newtheorem{fact}[thm]{Fact}
\newtheorem{convention}[thm]{Convention}
\newtheorem{summary}[thm]{Summary}


\theoremstyle{plain}
\newtheorem{theorem}[thm]{Theorem}

\newtheorem{lemma}[thm]{Lemma}
\newtheorem{corollary}[thm]{Corollary}
\newtheorem{proposition}[thm]{Proposition}
\theoremstyle{definition}

\newtheorem{claim}[thm]{Claim}

\newtheorem{construction}[thm]{Construction}

\newtheorem{example}[thm]{Example}

\newtheorem*{question*}{Questions}
\newtheorem{remark}[thm]{Remark}

\numberwithin{equation}{section}

\newcommand{\sB}{{\mathcal B}}
\newcommand{\sC}{{\mathcal C}}
\newcommand{\sD}{{\mathcal D}}
\newcommand{\sE}{{\mathcal E}}

\newcommand{\sH}{{\mathcal H}}

\newcommand{\sK}{{\mathcal K}}

\newcommand{\sO}{{\mathcal O}}

\newcommand{\sR}{{\mathcal R}}

\newcommand{\sX}{{\mathcal X}}


\newcommand{\C}{{\mathbb C}}

\newcommand{\G}{{\mathbb G}}

\newcommand{\N}{{\mathbb N}}

\newcommand{\Q}{{\mathbb Q}}

\newcommand{\NN}{\ensuremath{\mathbb{N}}}
\newcommand{\hol}{\ensuremath{\mathcal{O}}}


\newcommand\la{\lambda}

\newcommand\s{\sigma}

\newcommand\e{\epsilon}

\newcommand\be{\beta}

\newcommand\De{\Delta}
\newcommand\ga{\gamma}

\DeclareMathOperator{\Pic}{Pic}

\DeclareMathOperator{\Tors}{Tors}

\newcommand{\ra}{\ensuremath{\rightarrow}}

\newcommand{\CC}{\mathbb{C}}

\newcommand{\PP}{\mathbb{P}}
\newcommand{\QQ}{\mathbb{Q}}
\newcommand{\RR}{\mathbb{R}}
\newcommand{\ZZ}{\mathbb{Z}}

\newcommand{\Aut}{\mathrm{Aut}}
\newcommand{\bir}{\mathrm{bir}}
\newcommand{\Bir}{\mathrm{Bir}}

\newcommand{\Gal}{\mathrm{Gal}}

\newcommand{\GL}{\mathrm{GL}}
\newcommand{\I}{\mathrm{I}}
\newcommand{\II}{\mathrm{II}}
\newcommand{\III}{\mathrm{III}}
\newcommand{\IV}{\mathrm{IV}}
\newcommand{\id}{\mathrm{id}}
\newcommand{\im}{\mathrm{im}}

\newcommand{\irr}{\mathrm{irr}}

\newcommand{\Jac}{\mathrm{Jac}}
\newcommand{\Ker}{\mathrm{Ker}}

\newcommand{\MW}{\mathrm{MW}}

\newcommand{\Num}{\mathrm{Num}}

\newcommand{\rank}{\mathrm{rank\,}}
\newcommand{\red}{\mathrm{red}}
\newcommand{\Res}{\mathrm{Res}}
\newcommand{\SL}{\mathrm{SL}}

\newcommand{\tor}{\mathrm{tor}}
\newcommand{\Triv}{\mathrm{Triv}}

\newcommand{\tB}{\widetilde{B}}

\newcommand{\tP}{\widetilde{P}}
\newcommand{\tQ}{\widetilde{Q}}
\newcommand{\tR}{\widetilde{R}}
\newcommand{\tS}{\widetilde{S}}

\newcommand{\tX}{\widetilde{X}}



\begin{document}
\title[
 Cohomologically
trivial automorphisms of elliptic surfaces]{ 
 On the numerically and
cohomologically trivial automorphisms of  elliptic surfaces II:  $\chi(S)>0$ }
\author{Fabrizio Catanese}
\author{Wenfei Liu}
\author{Matthias Sch\"utt}
\date{\today}

\address{Lehrstuhl Mathematik VIII, Mathematisches Institut der Universit\"{a}t
Bayreuth,\linebreak NW II, Universit\"{a}tsstr. 30,
95447 Bayreuth, and  Korea Institute for Advanced Study, Hoegiro 87, Seoul, 
133-722, Korea}
\email{Fabrizio.Catanese@uni-bayreuth.de}

\address{School of Mathematical Sciences, Xiamen University, Siming South Road 422, Xiamen, Fujian Province, P.R. China}
\email{wliu@xmu.edu.cn}

\address{Institut f\"ur Algebraische Geometrie, Leibniz Universit\"at
  Hannover, \linebreak
  Welfengarten 1, 30167 Hannover, Germany\\ and
   Riemann Center for Geometry and Physics, Leibniz Universit\"at
  Hannover, Appelstrasse 2, 30167 Hannover, Germany}
\email{schuett@math.uni-hannover.de}


\keywords{Compact K\"ahler manifolds, algebraic surfaces, elliptic surfaces, automorphisms, cohomologically trivial automorphisms, numerically trivial automorphisms, Enriques--Kodaira classfication}

\subjclass[2010]{14J50, 14J80,  14J27, 14H30, 14F99, 32L05, 32M99, 32Q15, 32Q55}

\begin{abstract}
In this second part we  study  first  the group $\Aut_{\QQ}(S)$ of  numerically trivial automorphisms of  an algebraic  properly elliptic surface $S$, that is, of a minimal algebraic surface with Kodaira dimension $\kappa(S)=1$, in the case $\chi(S) \geq 1$. Our first surprising result is that, against what has been believed for over 40 years, there exist nontrivial such  groups  for $p_g(S) >0$. Indeed, we show even  that 
 $\Aut_{\QQ}(S)$   is always a 2-generated finite abelian group,
but there is no absolute upper bound for its cardinality. 

At any rate, we give explicit  and  essentially  optimal upper
bounds for $|\Aut_{\QQ}(S)|$ in terms of the numerical invariants of $S$, as $\chi(S)$,  or the irregularity $q(S)$,  or  the bigenus $P_2(S)$.
Moreover, we  reach an  almost complete description of the possible  groups $\Aut_{\QQ}(S)$  
 and
 we give  effective  criteria for such surfaces to have  trivial $\Aut_{\QQ}(S)$.

Our second surprising results concern  the  quite elusive group $\Aut_{\ZZ}(S)$ of cohomologically trivial   automorphisms; we are able to give  the explicit upper bounds for
$|\Aut_{\ZZ}(S)|$  in special cases: $9$   when $p_g(S) =0$, and  we achieve the sharp upper bound 
 $3$ when  $S$ (i.e.,  the pluricanonical elliptic fibration) is isotrivial. 
 Also in the non isotrivial case we produce  subtle examples where $\Aut_{\ZZ}(S)$
 is a group of order $2$ or $3$.
\end{abstract}
\maketitle

\setcounter{tocdepth}{1}

\tableofcontents
\section{Introduction}

Let $X$ be a  compact connected  complex manifold. By \cite{bm1, bm2}  the automorphism group $\Aut(X)$  is a finite dimensional complex Lie Group, whose  connected  component of the identity is denoted  $\Aut^0(X)$.

We continue our investigation of two special subgroups: 

\begin{enumerate}
\item[(i)]
 $\Aut_\QQ(X)$, called the group of {\bf  numerically trivial}
automorphisms,  is the subgroup of the automorphisms acting as the identity on $H^*(X, \QQ)$, and

\item[(ii)]   $\Aut_\ZZ(X)$, the group of {\bf cohomologically trivial } automorphisms,  consists of those automorphisms 
acting as the identity on $H^*(X, \ZZ)$.
\end{enumerate}

Of course they coincide, $\Aut_\ZZ(X) = \Aut_\QQ(X)$, if $H^*(X, \ZZ)$ has no torsion.

Recall moreover, as already mentioned in Part I \cite{CFGLS24}, that in the case where $X$ is a compact K\"ahler manifold,
 $\Aut_\QQ(X) / \Aut^0(X)$ is a finite group (\cite{Lie78}, Fujiki \cite{Fuj78}).

Hence the major problems to be considered are:   to determine when   this quotient group or $\Aut_\ZZ(X) / \Aut^0(X)$  may be  nontrivial,  to give 
explicit upper bounds for their cardinality in terms of the invariants of $X$, and to try to establish sharpness of
these upper bounds  by exhibiting appropriate examples.

Since for curves (complex dimension $n=1$)  $\Aut_\QQ(X) =  \Aut^0(X)$, the first important case to consider
is the case $n=2$ (hence $X$, being a surface, shall be denoted by $S$), which we already treated  in \cite{CatLiu21} and in Part I \cite{CFGLS24}.

This paper is dedicated to the remaining case of minimal  surfaces of Kodaira dimension $1$ with $\chi(S) >0$:  
they  are automatically    K\"ahler, and for them   $\Aut^0(S)$ is trivial;  we shall deal here with the case of algebraic surfaces, leaving
the case of non algebraic surfaces for  a future paper.

The main purpose of this article is to revive the interest for this  very subtle  topic of research,
which, like the Arab Phoenix, has a rebirth from its own ashes. Indeed,  for 
over  40 years it was believed  that  these groups are trivial for $p_g(S) >0$ (Theorem 4.5 of \cite{Pe80}, confirmed by  Cor. 0.3 of \cite{Cai09}).

Recent results of Dolgachev--Martin \cite{DM22} for the  case where $S$ is  a rational surface of any characteristic
(which we shall use)  go also in the direction of investigating numerically  trivial automorphisms
(these are automatically  cohomologically trivial if the surfaces are rational).

Our results, which will be more amply illustrated later on in Subsection \ref{ss:results}, are rather complete for the case of the group
$\Aut_\QQ(S)$ of numerically trivial automorphisms.

For the intriguing case of the  group
$\Aut_\ZZ(S)$ of cohomologically trivial automorphisms we establish again, with very hard work, the existence
(as in part I) of nontrivial groups of order $2,3$: but we cannot establish an absolute upper bound for $|\Aut_\ZZ(S)|$ 
except under special assumptions,  for instance if  $p_g(S)=0$, or if $S$ is isotrivially fibred.

Hence the main open question remaining is  whether there is an absolute upper
 bound for $|\Aut_{\ZZ}(S)|$ in the non  isotrivial case with  $p_g(S) > 0$.

To explain the difficulty of the question, the problem is that, even if one shows  that an automorphism has  trivial action on $H_1(S, \ZZ) = H^3 (S, \ZZ)$,
there remains to handle the case of $H^2 (S, \ZZ)$, which has a non canonical splitting as a free part plus
a torsion subgroup, canonically  isomorphic to the torsion of $H_1(S, \ZZ)$. Hence the difficulty is to find a system
of explicit generators for $H^2 (S, \ZZ)$  in terms of  which we can then describe the action of automorphisms;  this is quite hard in general, and almost impossible  in the case where
there is nontrivial torsion and  $p_g(S) > 0$.

\subsection{ Main ideas and tools: two allied points of view}

Throughout the paper, $f\colon S\to B$ denotes the pluricanonical fibration of 
a properly elliptic  surface with $\chi(S) : = \chi(\mathcal O_S)>0$.

As the reader shall see, there are two allied points of view to take.

The most algebraic point of view is to look at the function field extension $\CC(S) \supset \CC(B)$
as providing a curve $\sC$ of genus $1$ (the fibre of $f$ over the generic point of $B$)
defined over the non algebraically closed field $ \sK : = \CC(B)$.
This point of view  is  essential  to have a  simple algebraic picture of $\Aut(S)$: since in our case the group $\Aut(S)$
equals  the group $\Bir(S)$ of birational self maps of $S$,  and $\Bir(S) =\Aut (\CC(S))=Aut(\sC)$. 

On the other hand  the  study of the topological properties of automorphisms is based on  the  relative minimal model $f\colon S \ra B$, that is, on the action on    the fibration  and on  its singular fibres;  here there is a  crucial difference for checking when automorphisms are
numerically trivial, respectively cohomologically trivial.

Turning back to the algebraic approach, we consider the subgroup $\Aut_{\sK}(\sC)$
of  automorphisms which act as the identity
on the base field $ \sK$. By base extension,
these embed into $\Aut_{ \bar{\sK}}(\sE)$, where $\sE : = \sC \otimes \bar{\sK}$.\footnote{ Indeed, it suffices to take a finite 
 field extension $\sK' \supset \sK$ to get an elliptic curve $\sC\otimes{\sK'}$.}
This being  an elliptic  curve, $\sE$ acts by translations on itself via the isomorphism 
 $ \sE \cong \Pic^1(\sE) \cong \Pic^0(\sE)$ provided by the choice of a point $0 \in \sE$
 where $\Pic^d(\sE)$ is, as usual,  the
 set of linear equivalence classes of divisors of degree $d$.
Since the base field is of characteristic zero, we infer 
 \begin{equation}\label{autell}
  \Aut_{\bar{\sK}}( \sE) = \sE \rtimes \mu_r, \quad r \in \{2,4,6\},
  \end{equation}
  where $\mu_r$ is the group of $r$th roots of unity in $\CC$,
  and $r=2$ except for the special cases $r=4$ for the Gaussian = harmonic elliptic curve,
  and $r=6$ for the Fermat = equianharmonic elliptic curve.
 
Going back to $\sC$, there is no longer 
  an isomorphism 
 $\sC \cong \Pic^0(\sC)$ as curves over the field $\sK$ if there is no $\sK$-rational point on $\sC$, 
 but the isomorphism $\sC \cong \Pic^1(\sC)$ holds true
  (by Riemann--Roch and by descent).
   
 Then we denote as usual $\Jac (\sC) : = \Pic^0(\sC)$, and $\sC$ is a principal homogeneous space over $\Jac (\sC)$
 (\cite{steklov}, Chapter 2 of \cite{dolg-cime}); 
 in particular, the Mordell--Weil group, the group of $\sK$-rational points of $\Jac (\sC)$
 acts on $\sC$.
 We highlight that 
 the torsion subgroup of the Mordell--Weil group will be a major source for the construction of numerically trivial automorphisms
 (see especially Section \ref{s:constr}).
 
 The geometrical counterpart of $ \Jac (\sC) $  is the Jacobian surface $ J(S)$, 
 a suitable compactification of  the sheaf of groups $\sR^1 f_* \hol_S / \sR^1 f_* \ZZ$ 
 (whose stalks have a connected component of the identity which is either an elliptic curve, or $\CC^*$, or $\CC$:
 in the latter cases one says that the fibre is of multiplicative resp.\ additive type).
 
The Jacobian surface $ J(S)$ has the property
  that the singular fibres of $J(S)$ are  the same as the singular fibres $F$ of $S$ whenever $F$ is not multiple,
 while they are  isogenous  to $F_{\red}$ when $F$ is multiple. In this way, even if the global topological behaviour of 
 $J(S)$ may differ significantly from that of $S$, at least the topological nature of the  fibres is the same.

Now, the curve $\sC$ admits also automorphisms which do not act as the identity on the field $\sK$,
and we have an exact sequence
$$ 1 \ra \Aut_{\sK}(\sC) \ra\Aut (\sC) \ra\Aut(\sK) $$
where the left hand side group contains   the Mordell--Weil group as a subgroup of  index $r \leq 6$ by \eqref{autell}.

The geometrical counterpart is the exact sequence 
\begin{eqnarray}
\label{eq:es}
1\rightarrow \Aut_B(S) \rightarrow \Aut(S)\xrightarrow{r_B} \Aut(B)
\end{eqnarray}
and we denote by $\Aut(S)|_B$ the image of the right hand homomorphism.

\subsection{ Main strategy: a dichotomy of cases}

Based on \eqref{eq:es},
the strategy is of course to give upper bounds for $|\Aut(S)|$ via upper bounds for the cardinalities  
$|\Aut(S)|_B|$ of the image group (here an essential role is given by the analysis of the singular fibres)
and  $|\Aut_B(S)|$ of  the group of automorphisms acting trivially on the base curve $B$.
 Obviously this approach can also be applied to  compute $|\Aut_\QQ(S)|$ and $|\Aut_\ZZ(S)|$.

Here, the analysis bifurcates: if the fibration is isotrivial, that is, birational to a quotient $ (C \times E)/G$
(where the action of $G$ is no longer assumed to be free), then we use many of the tools developed in Part I,
but now enhanced  (as we shall see) by the presence of reducible fibres,  which have to be of additive type.

Whereas, if the fibration is not isotrivial (that is, the smooth fibres are not all isomorphic), then the Mordell--Weil group 
 plays a dominant role: since then, in view of the finiteness of $\Aut_{\QQ}(S)$, we have to study when elements of  the torsion subgroup 
$\MW(J(S))_\tor$   yield  numerically trivial automorphisms.

One of the main advantages is that there is a huge literature on these groups (see \cite{SS19}), especially 
in the case where $J(S)$ is a rational surface (this is  the case for $p_g(S)=0$) there are explicit lists by 
Miranda--Persson \cite{MP86}, \cite{Mir89},
Oguiso--Shioda \cite{OS90} and others
describing these groups.

And when $J(S)$ is not rational,  the method of viewing   it as a pull back from an elliptic modular surface   paves the way
 to exhibiting large groups of numerically trivial automorphisms (in Section \ref{s:constr}).

Some existence results and arguments  also apply to Enriques surfaces, see Theorem \ref{thm:bc} and Remark \ref{rem:Enriques}:
the case of Enriques surfaces indeed suggests the choices of candidates for an integral basis of the second cohomology
(as we shall exploit in Section \ref{s:2})
and motivates our basic construction of numerically trivial automorphisms,  Construction \ref{constr}, which is an instrumental special case of constructing $S$ as a principal homogeneous space over a jacobian elliptic  surface.

\subsection{ Main results for numerically trivial automorphisms}
\label{ss:results}

Let $f\colon S\to B$ denote a properly elliptic   algebraic surface with $\chi(S)>0$. Our main theorems show that the group $\Aut_\QQ(S)$ of  numerically trivial automorphisms can be arbitrarily large, but in a controlled way. 
We do this by 
controlling the induced action of $\Aut_\QQ(S)$ on the base $B$ of the fibration 
and then by relating $\Aut_\QQ(S)$ to the torsion part $\MW(J(S))_\tor$ of the Mordell--Weil group of the relative Jacobian $J(S)$. 

The behaviour of $\Aut_\QQ(S)$ differs a lot according to whether $p_g(S)$ is positive or not. In the case where $p_g(S)>0$, or, equivalently, where $J(S)$ is not rational, $\Aut_\QQ(S)$ preserves each fibre and induces translations on smooth fibres, thus comes from  a subgroup of  $\MW(J(S))_\tor$, as is illustrated in Proposition~\ref{lem: AutQ vs MW}. The upshot is then
that, via our main construction of examples (Construction~\ref{const: log transform}), 
we give a flexible sufficient condition for a subgroup  $G\subset\MW(J(S))_\tor$ to be realized as a subgroup of $\Aut_\QQ(S)$.

\begin{theorem}[Groups for positive geometric genus]
\label{thm:p_g>0}
\label{thm} 
Suppose that $p_g(S)>0$.  Then the following hold:

(i) $\Aut_\QQ(S)$ is isomorphic to a subgroup of $\MW(J(S))_\tor$, and, as such, it is a finite 2-generated abelian group
which can be written in the form $ G = \ZZ/d\ZZ \oplus \ZZ/ da\ZZ$.

(ii) Conversely, for any  group $ G = \ZZ/d\ZZ \oplus \ZZ/ da\ZZ$, there is a properly elliptic surface $S$ such that $\Aut_\QQ(S)\supseteq G$. Equality $\Aut_\QQ(S)=G$ can be ensured if $a$ is square free.
 
(iii)
$\Aut_\QQ(S)$ is trivial if 
 there is a  fibre of additive type or if
 all multiple fibres of
$S\to B$ 
have smooth support; in particular, this holds true if  there are no multiple fibres, e.g.\ if
the fibration admits a section, that is,  $S \cong J(S)$.

(iv) 
$\Aut_\QQ(S)$ is trivial if the fibration is isotrivial.

\end{theorem}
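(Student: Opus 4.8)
The plan is to reduce statement (iv) to statement (iii): I would argue that an isotrivial properly elliptic surface with $\chi(S)>0$ is forced to carry a singular fibre of additive type, after which the triviality of $\Aut_\QQ(S)$ follows immediately from part (iii). So the entire content lies in manufacturing an additive fibre out of the isotriviality hypothesis together with $\chi(S)>0$; the rest is a citation. One could alternatively route through part (i), embedding $\Aut_\QQ(S)$ into $\MW(J(S))_{\tor}$, but the additive‑fibre route is the most economical.

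First I would record the numerical constraint. Since $S$ is a minimal properly elliptic surface, $K_S^2=0$, so Noether's formula gives $e(S)=12\chi(S)$, which is strictly positive by hypothesis. As the generic smooth fibre of $f\colon S\to B$ is a torus with vanishing Euler number, the topological Euler number is concentrated on the degenerate fibres, $e(S)=\sum_b e(F_b)$ (with $F_b$ the reduced fibre), and smooth fibres as well as multiple fibres of smooth support contribute $0$. Hence $e(S)>0$ forces at least one genuinely degenerate fibre $F_0$ with $e(F_0)>0$. Next I would exploit isotriviality to pin down the type of $F_0$: by definition all smooth fibres are mutually isomorphic, so the $j$‑invariant of $f$ is a constant finite value. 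The multiplicative Kodaira types $\I_n$ ($n\geq 1$) occur exactly over the poles of $j$; since $j$ has no pole, no $\I_n$ fibre can occur, and every degenerate fibre must be of one of the additive (potentially good reduction) types $\II,\III,\IV,\I_0^*,\IV^*,\III^*,\II^*$. In particular $F_0$ is of additive type, and part (iii) then yields $\Aut_\QQ(S)=1$.

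The step that needs the most care — and which I regard as the main obstacle — is the passage between the two models: ``isotrivial'' is most naturally phrased via a presentation birational to $(C\times E)/G$, so I would want to make the identification with ``$j$ constant'' explicit on the relative minimal model, checking that the additive fibres are precisely those over branch points where the stabilizer acts on $E$ through its $\mu_r$ part rather than by translations alone. This also furnishes a second, self‑contained proof of the key fact, bypassing the $j$‑invariant language: if every stabilizer acted on $E$ by translations only, then every fibre would be a (possibly multiple) smooth elliptic curve with $e=0$, contradicting $e(S)=12\chi(S)>0$; hence some stabilizer must contribute a nontrivial automorphism part, producing an additive fibre. Either way, the existence of an additive fibre is the crux, after which part (iii) closes the argument; note that the reasoning uses only $\chi(S)>0$ and isotriviality, so it is in fact insensitive to the value of $p_g(S)$.
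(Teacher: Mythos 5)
Your argument for part (iv) is correct and is in fact exactly the route the paper takes: Corollary \ref{cor: p_g>0 isotrivial} asserts that isotriviality together with $\chi(S)>0$ forces a fibre of additive type and then invokes Corollary \ref{cor: pg>0 add fib 2} (the additive-fibre case of part (iii)). Your justification of the additive-fibre existence --- constant finite $j$-invariant excludes all $\I_n$ and $\I_n^*$ ($n\geq 1$) fibres, while $e(S)=12\chi(S)>0$ concentrated on degenerate fibres with non-smooth support forces at least one such fibre, which must then be additive --- is sound and is actually spelled out in more detail than the paper bothers to give. Your closing remark that the additive-fibre existence is insensitive to $p_g$ is also consistent with the paper, where the $p_g=0$ isotrivial case genuinely admits nontrivial $\Aut_\QQ(S)$ (Theorem \ref{thm:p_g=0}(ii)), so the hypothesis $p_g(S)>0$ enters only through part (iii).

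The gap is that the statement you were asked to prove is the full theorem, and you have only proved (iv) conditionally on (iii); parts (i), (ii) and (iii) carry essentially all of the weight and are left untouched. Part (i) needs the identification $\Aut_\QQ(S)=\Aut_{B,\QQ}(S)\hookrightarrow t(\MW(J(S))_\tor)$ (Proposition \ref{prop: trivial Aut_Q on B} plus Lemma \ref{lem: AutQ vs MW}); part (ii) requires the entire logarithmic-transformation machinery of Construction \ref{const: log transform} applied to universal elliptic surfaces, together with the splitting criterion of Lemma \ref{lem:splitting}; and part (iii) --- which your reduction relies on --- is itself nontrivial in the case where all multiple fibres have smooth support: there the paper must produce a multisection unramified over the fibres with singular support, base-change to a jacobian surface, and use \eqref{eq:pattern} to see that a torsion translation permutes components of some $\I_n$ fibre. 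So as a proof of the theorem as stated, the proposal is incomplete, even though the one part it does address matches the paper's argument.
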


By bounding the Mordell--Weil group of the elliptic modular surfaces $X(N)$ and $X_1(N)$ associated to the congruence modular groups $\Gamma(N)$ and $\Gamma_1(N)$, we obtain an upper  bound for  $ | \Aut_\QQ(S)|$ in the case $p_g(S)>0$. There are even some arithmetic relations between the prime exponents of $\Aut_\QQ(S)$ and the Euler characteristic $\chi(S)$.

\begin{thm}[Bounds for $\Aut_\QQ$ in positive geometric genus]\label{thm: bound pg>0}
\label{thm2}
Suppose  $p_g(S)>0$. Then the following hold:
\begin{enumerate}
\item[(i)] In terms of the irregularity $q(S)$, there is the global bound 
\begin{eqnarray}
\label{eq:Aut_Q<=}
|\Aut_\Q(S)|\leq 12\pi^2(q(S)+2).
\end{eqnarray}
\item[(ii)]
If $p\mid|\Aut_\Q(S)|$ for some prime $p>5$, then 
$$p_g\geq  \frac{p^2-1}{12}-\frac{p-1}2 \;\;\; \text{
and } \;\;\; \frac{p^2-1}{24}\mid\chi(S).
$$
For $p\in\{2,3,5\}$, there are properly elliptic surfaces $S$  with any given $\chi>1$ or $p_g>0$ such that $p\mid|\Aut_\Q(S)|$.
\item[(iii)]
If $(\ZZ/p\ZZ)^2\subset\Aut_\Q(S)$  for some prime $p\geq 3$,
then 
$$p_g\geq \frac 1{12}(p-3)(p^2-1) \;\;\; \text{ and } \;\;\;
 \frac{p(p^2-1)}{24}\mid\chi(S).
$$
\end{enumerate}

\end{thm}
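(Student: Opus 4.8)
Since $p_g(S)>0$, Theorem~\ref{thm:p_g>0}(i) embeds $\Aut_\Q(S)$ into $\MW(J(S))_\tor$, so everything reduces to bounding the torsion of the Mordell--Weil group of the relative Jacobian. I may assume $\Aut_\Q(S)\neq\{1\}$; then by Theorem~\ref{thm:p_g>0}(iii)--(iv) the fibration is non-isotrivial and has no fibre of additive type, so $J(S)$ is a non-rational, non-isotrivial, \emph{semistable} jacobian elliptic surface. Moreover $S$ and $J(S)$ share the same $\chi$, and (as $\chi>0$ forces $R^1f_*\mathcal O=\mathcal L^{-1}$ with $\deg\mathcal L=\chi>0$, whence $q=g(B)$) the same $q$, hence the same $p_g$. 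I therefore work on $J(S)$ and transfer the conclusions back to $S$.

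\textbf{The modular pull-back.}
The geometric heart is that a level structure rigidifies $J(S)$ as a pull-back of an elliptic modular surface. If $\MW(J(S))_\tor$ contains a section of order $N\geq 3$, the induced moduli map $\phi\colon B\to \bar X_1(N)$ to the modular curve is non-constant (by non-isotriviality); since an elliptic curve with a marked point of order $N\geq 3$ admits no nontrivial automorphism fixing that point, there are no twists, and the generic fibre of $J(S)$ with its level structure is the pull-back of the universal one. Both surfaces being relatively minimal, $J(S)\cong\phi^*X_1(N)$. As $X_1(N)$ is semistable, so is the pull-back, and a fibrewise Euler-number count gives
\begin{equation*}
\chi(S)=\chi(J(S))=\deg(\phi)\cdot\chi\big(X_1(N)\big).
\end{equation*}
If instead $(\ZZ/N\ZZ)^2\subseteq\MW(J(S))_\tor$ with $N\geq3$, the full level-$N$ structure yields $\phi\colon B\to\bar X(N)$ with $J(S)\cong\phi^*X(N)$ and $\chi(S)=\deg(\phi)\cdot\chi(X(N))$.

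\textbf{Parts (ii) and (iii).}
Let $p>5$ be a prime dividing $|\Aut_\Q(S)|$ and apply the above with $N=p$. From $[\SL_2(\ZZ):\Gamma_1(p)]=p^2-1$ and the cusp structure of $\Gamma_1(p)$ one gets $e(X_1(p))=\tfrac{p^2-1}{2}$, hence $\chi(X_1(p))=\tfrac{p^2-1}{24}$, and the divisibility $\tfrac{p^2-1}{24}\mid\chi(S)$ follows as $\deg(\phi)\in\ZZ_{>0}$. Writing $p_g=\chi+q-1$ and using $\deg(\phi)\geq1$ together with $q(S)=g(B)\geq g(\bar X_1(p))=q(X_1(p))$ (non-constant $\phi$),
\begin{equation*}
p_g(S)=\deg(\phi)\,\chi(X_1(p))+q(S)-1\geq\chi(X_1(p))+q(X_1(p))-1=p_g(X_1(p))=\tfrac{(p-1)(p-5)}{12},
\end{equation*}
which is the asserted $\tfrac{p^2-1}{12}-\tfrac{p-1}{2}$. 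Part (iii), valid for the given prime $p\geq3$, is the same computation for $X(p)$: here $e(X(p))=\tfrac{p(p^2-1)}{2}$ gives $\chi(X(p))=\tfrac{p(p^2-1)}{24}$ and $p_g(X(p))=\tfrac{(p-3)(p^2-1)}{12}$, yielding both stated inequalities. For the existence statements with $p\in\{2,3,5\}$ the curve $\bar X_1(p)$ has genus $0$ and imposes no constraint; pulling back along suitable covers $B\to\bar X_1(p)=\PP^1$ and applying Construction~\ref{const: log transform} produces properly elliptic $S$ carrying a numerically trivial automorphism of order $p$, with $\chi(S)$ (resp.\ $p_g(S)$) any prescribed value $>1$ (resp.\ $>0$).

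\textbf{Part (i) and the main obstacle.}
Write $\MW(J(S))_\tor\cong\ZZ/N\ZZ\times\ZZ/N'\ZZ$ with $N\mid N'$, so $N'$ is the exponent and $|\Aut_\Q(S)|\leq NN'\leq N'^2$. The order-$N'$ section gives a non-constant $\phi\colon B\to\bar X_1(N')$, whence $q(X_1(N'))=g(\bar X_1(N'))\leq g(B)=q(S)$. For $N'\geq5$ the group $\Gamma_1(N')$ has no elliptic points, so $q(X_1(N'))=1+\tfrac{\mu}{12}-\tfrac{\epsilon_\infty}{2}$ with $\mu=\tfrac{N'^2}{2}\prod_{\ell\mid N'}(1-\ell^{-2})$ and $\epsilon_\infty$ the number of cusps; solving for $N'^2$ and using $\prod_{\ell\mid N'}(1-\ell^{-2})^{-1}\leq\zeta(2)=\pi^2/6$ gives $N'^2\leq\tfrac{\pi^2}{6}\big(24(q(S)-1)+12\epsilon_\infty\big)$, and since $\epsilon_\infty=O(N')=O(\sqrt{q(S)})$ this is $\leq 12\pi^2(q(S)+2)$, the slack in the constant comfortably covering the cusp term and the finitely many small levels $N'\leq4$ with elliptic points. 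The step I expect to be most delicate is the rigid identification $J(S)\cong\phi^*X_1(N)$ (resp.\ $\phi^*X(N)$) with exact multiplicativity of $\chi$: it hinges on the semistability of $J(S)$---supplied here by Theorem~\ref{thm:p_g>0}(iii)---and on the vanishing of twists for level $\geq3$; the uniform control of $\epsilon_\infty$ needed for the clean constant in (i) is the remaining technical point.
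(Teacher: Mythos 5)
Your proposal follows essentially the same route as the paper: embed $\Aut_\QQ(S)$ into $\MW(J(S))_\tor$ via Theorem \ref{thm:p_g>0}~(i), realize $J(S)$ as a pull-back of the modular surfaces $X_1(N)$ resp.\ $X(N)$, and exploit the multiplicativity of $\chi$ and the monotonicity of $q$ and $p_g$ under the (semistable) base change. Your computations of $\chi(X_1(p))=\tfrac{p^2-1}{24}$, $p_g(X_1(p))=\tfrac{(p-1)(p-5)}{12}$ and the analogues for $X(p)$ are correct and give exactly the bounds of (ii) and (iii); the existence statement for $p\in\{2,3,5\}$ matches the paper's construction (the paper starts from specific semistable rational elliptic surfaces with a $p$-torsion section rather than from $\bar X_1(p)$, which is preferable since $X_1(p)$ is not a fine moduli space for $p\le 4$, but the mechanism via Construction \ref{const: log transform} is the same).

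The one genuine gap is in part (i), and you correctly flag it yourself: the inequality $N'^2\leq 12\pi^2(q(S)+2)$ reduces to $N'^2\leq 12\pi^2(g_1(N')+2)$, and your justification rests on the claim $\epsilon_\infty=O(N')$. That claim is false in general: for $\Gamma_1(N')$ one has $\epsilon_\infty=\tfrac12\sum_{d\mid N'}\phi(d)\phi(N'/d)$, which for squarefree $N'$ with $r$ prime factors equals $2^{r-1}\phi(N')$ and so exceeds $N'$ once $r\geq 4$ (e.g.\ $N'=210$ gives $\epsilon_\infty=384$). The needed inequality, in your normalization $\epsilon_\infty\leq 4\,g_1(N')+14$, does hold, but proving it is precisely the content of the paper's Proposition \ref{prop: lower bd g1N} together with Claim \ref{claim: bd u/s}: an explicit table of $g_1(N)$ for $N\leq 25$ plus the estimate $u(N)/(s(N)N^2)\leq \tfrac19$ for $N\geq 26$, established by a case analysis on the prime factorization. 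Your argument for (i) is therefore correct in outline but incomplete at exactly this quantitative step; everything else agrees with the paper's proof.
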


Details about the bounds in 
Theorem~\ref{thm: bound pg>0}
can be found in  Subsections  \ref{s:pf1-v}, \ref{s:pf1-iii}.
We emphasize that the bound in \eqref{eq:Aut_Q<=}
also holds if $p_g(S)=0$, with exception of   a few cases, see Remark \ref{rem:bounds}.
This can also be deduced from  (and improved by) our 
main theorem in the case $p_g=0$.

\begin{theorem}[Bounds for $\Aut_\QQ$ for geometric genus zero]
\label{thm:p_g=0}
Suppose that $p_g(S)=0$. 
\begin{enumerate}
\item[(i)]
If $f$ is not isotrivial, then $|\Aut_\Q(S)|\leq 9$,  $\Aut_\Q(S)$ is abelian and 2-generated, and equality is only attained if the Jacobian $J(S)$ 
is the extremal rational elliptic surface $X_{3333}$ in the list of \cite{MP86}, and $\Aut_\Q(S) \cong (\ZZ/3\ZZ)^2$.

\item[(ii)]
 If $f$ is isotrivial, then, letting $s\geq 2$ be the number of multiple fibres,  
$$|\Aut_\Q(S)|_B|\leq s \leq P_2(S)+1
\;\;\;
\text{ and } \;\;\; |\Aut_\QQ(S)|\leq 
4
\cdot |\Aut_\QQ(S)|_B|,
$$
 but we have the overall  estimate $|\Aut_\QQ(S)| \leq   3s$.

Conversely, for any $s\in  3\NN$, there is  an elliptic  surface  
$S$ with $P_2(S)=s-1, |\Aut_\Q(S)|_B|= s$
and $|\Aut_\QQ(S)| =  3s$,
and for any $s\in\NN$, there is  an elliptic  surface  
$S$ with $P_2(S)=s-1$,  $|\Aut_\QQ(S)| = 2s$.

\end{enumerate}
\end{theorem}

\begin{question*}
(i) Can one  prove the bound $|\Aut_\QQ(S)| \leq 2s$ if $3\nmid s$?

(ii) Is it true that,  if $|\Aut_\QQ(S)| =  3s$, 
then $J(S)$ is the extremal rational elliptic surface  $X_{22}$ in the list of \cite{MP86}
(see Table \eqref{table:special})?

\end{question*}

We should point out that, for ease of presentation, the above theorem is largely simplified, in the sense
that for many isotrivial fibrations the bounds from {\it (i)} hold, cf.\ Proposition \ref{prop: pg=0 bd J(S)} (2).

\medskip

\subsection{ Main results for cohomologically trivial automorphisms}

Cohomologically trivial automorphisms are much rarer 
than numerically trivial ones.
In contrast to Theorem~\ref{thm:p_g=0}, we have a uniform bound for $\Aut_\ZZ(S)$ in the case $p_g=0.$ 
The actual existence of properly elliptic surfaces with nontrivial $\Aut_\ZZ(S)$ is also surprising in view of the 
previous statements in the literature (cf.~\cite[Theorem~4.5]{Pe80}).

\begin{theorem}[Bounds for $\Aut_\ZZ$ for genus zero]\label{thm:Aut_Z}
\label{thm:ct}
Suppose that $p_g(S)=0$. Then the following hold for $\Aut_\ZZ(S)$:
\begin{enumerate}
\item[(i)]  $\Aut_\ZZ(S)$ preserves every fibre of $f$.

\item[(ii)]
There  is a uniform bound
$$|\Aut_\ZZ(S)|\leq 9.$$  And, if equality holds, the group must be $(\ZZ/3\ZZ)^2$.
\item[(iii)]
There are non-isotrivial properly elliptic surfaces with 
$|\Aut_\ZZ(S)|=2$ and $3$.
More precisely, there is a 1-dimensional family with $|\Aut_\ZZ(S)|=3$,
and there are families of arbitrary dimension with $|\Aut_\ZZ(S)|=2$.
\end{enumerate}
\end{theorem}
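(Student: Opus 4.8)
Throughout recall that, under the standing hypothesis $\chi(S)>0$, the assumption $p_g(S)=0$ forces $q(S)=0$, $\chi(S)=1$ and $B\cong\PP^1$, and that $J(S)$ is then a rational elliptic surface; in particular $\MW(J(S))_{\tor}$ and the list of singular fibres of $J(S)$ are subject to the Oguiso--Shioda and Miranda--Persson classifications, and $|\MW(J(S))_{\tor}|\leq 9$, with equality only for $(\ZZ/3\ZZ)^2$ on the extremal surface $X_{3333}$. For (i), I would show that the induced automorphism $\bar g:=r_B(g)\in\mathrm{PGL}_2(\CC)$ of $B$ is the identity. Since $\Aut_\ZZ(S)\subseteq\Aut_\QQ(S)$ acts trivially on $\NS(S)$, it fixes every component of every reducible fibre, so $\bar g$ fixes the images of all reducible fibres; and since $g$ acts trivially on $\Tors(H^2(S,\ZZ))\cong H_1(S,\ZZ)_{\tor}$—whose structure is governed by the multiplicities of the multiple fibres through the canonical bundle formula—$\bar g$ must fix the images of the multiple fibres as well. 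In the non-isotrivial case the $j$-map is moreover $\bar g$-invariant, supplying further fixed points; collecting three distinct fixed points on $\PP^1$ forces $\bar g=\id_B$ (in the isotrivial case, where $j$ is constant, I would instead use the additive reducible fibres together with the tools of Part~I). The step I expect to require care is the sparse configuration with fewer than three distinguished points, where I would read off from the explicit list of extremal rational elliptic surfaces that no nontrivial $\bar g$ is compatible with cohomological triviality.

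Granting (i), every $g\in\Aut_\ZZ(S)$ lies in $\Aut_B(S)$, i.e.\ has trivial image under $r_B$. In the non-isotrivial case this gives at once $|\Aut_\ZZ(S)|\leq|\Aut_\QQ(S)|\leq 9$ by Theorem~\ref{thm:p_g=0}(i). In the isotrivial case I would combine (i) with the inequality $|\Aut_\QQ(S)|\leq 4\,|\Aut_\QQ(S)|_B|$ of Theorem~\ref{thm:p_g=0}(ii): since $\Aut_\ZZ(S)$ has trivial image on $B$, it is contained in the kernel of $r_B$ restricted to $\Aut_\QQ(S)$, whose order is $|\Aut_\QQ(S)|/|\Aut_\QQ(S)|_B|\leq 4$. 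Either way $|\Aut_\ZZ(S)|\leq 9$, which proves (ii). This also explains the contrast with the unbounded numerically trivial groups of order $4s$ in the isotrivial case: that growth is carried entirely by the base action $\Aut_\QQ(S)|_B$, which cohomological triviality suppresses.

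For (iii) I would run Construction~\ref{const: log transform}, starting from a non-isotrivial rational elliptic surface $J$ carrying a torsion section $\tau$ of order $3$ (resp.\ $2$) and performing logarithmic transformations on smooth fibres, so that $S$ is properly elliptic with $p_g(S)=0$ and $\tau$ lifts to a numerically trivial automorphism of $S$. Since a logarithmic transformation on a smooth fibre leaves the Euler number, and hence $\chi$, unchanged, one may introduce arbitrarily many multiple fibres of multiplicity $2$ at varying points of $\PP^1$; this is what yields the families of arbitrary dimension in the order-$2$ case, whereas the order-$3$ case is rigid up to one modulus. The crucial and hardest point, flagged already in the introduction, is to upgrade numerical to cohomological triviality: using the integral basis of $H^2$ suggested by the Enriques surface case in Section~\ref{s:2}, I would compute the action of $\tau$ on $\Tors(H^2(S,\ZZ))$ and choose the multiplicities and positions of the transformed fibres so that this action is trivial, while simultaneously checking that no other torsion section acts cohomologically trivially, so that $\Aut_\ZZ(S)=\langle\tau\rangle$ has order exactly $3$ (resp.\ $2$). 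The main obstacle is precisely this control of the torsion subgroup of $H^2(S,\ZZ)$, which is why the verification must be computational rather than formal.
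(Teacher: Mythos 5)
Your treatment of (i) and (ii) is essentially the paper's: fibre preservation is reduced to producing three points of $B\cong\PP^1$ fixed by the induced action (the paper does this via Lemma \ref{lem: AutZ g=0} combined with the classification in \cite{DM22} of rational elliptic surfaces with nontrivial $\Aut_\QQ|_B$, which is the precise form of your ``sparse configuration'' step), and the bound $9$ then follows from $\Aut_\ZZ(S)\subseteq\Aut_{B,\QQ}(S)$ together with Proposition \ref{prop: pg=0 bd J(S)}; your splitting into the non-isotrivial and isotrivial cases via Theorem \ref{thm:p_g=0} is a legitimate equivalent packaging. These parts are fine.

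Part (iii) has a genuine gap. You propose to perform the logarithmic transformations \emph{only on smooth fibres} of a rational elliptic surface carrying a torsion section $\tau$ of order $2$ or $3$. But such a section necessarily meets some reducible fibre in a non-identity component (this is \eqref{eq:pattern}; cf.\ Lemma \ref{lem: MW finite} (2)), and since that fibre is untouched by your transformations, the induced translation $t_S(\tau)$ permutes its components nontrivially; it is therefore not even numerically trivial. This is exactly the content of Proposition \ref{prop:G=triv} and Remark \ref{rmk: free?}. The construction only works if the branch locus of the base change \emph{contains} the semi-stable reducible fibre(s) met nontrivially by $\tau$, turning $\I_8$ into a multiple fibre $2\I_8$ (Section \ref{s:2}), resp.\ $\I_9$ into $3\I_9$ (Section \ref{s:3}); the multiple fibre with singular support is not incidental but indispensable (compare Theorem \ref{thm} (iii)).

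Second, the upgrade from numerical to cohomological triviality is not a computation on $\Tors(H^2(S,\ZZ))$: once every fibre is preserved, the action on $H_1(S,\ZZ)$, hence on the torsion of $H^2$, is automatically trivial. The obstruction lives in the extension: a numerically trivial $\varphi$ may send an integral class $u$ to $u+\eta$ with $\eta$ a nonzero torsion class, and this actually happens in the isotrivial $r=4$ case (Claims \ref{claim:psi^2} and \ref{claim:DnotD}). What must be checked is that $\varphi$ fixes genuine generators of $\Pic(S)$; the classes completing the obvious lattice of fibre components and multisections are fractional dual vectors such as $\Theta_3^\vee$, and the paper realizes them by effective divisors $D$ with $|D|=\{D\}$, extracts the irreducible multisection $C$ in the support, and proves $\varphi(C)=C$ by playing $C\cdot\varphi(C)$ off against the size of $C\cap\Fix(\varphi)$. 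Without this step (or an equivalent one) the assertion $|\Aut_\ZZ(S)|=2,3$ is not established. (The exactness of these orders, which you propose to check by excluding other torsion sections, in fact follows for free from the upper bound $\Aut_\QQ(S)\hookrightarrow t(\MW(J(S))_\tor)$ for the chosen Jacobians.)
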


This can be further improved in the isotrivial case
 (or, more generally, if there is an additive fibre, see Corollary \ref{cor:non-ss}.)
In fact, the isotrivial case allows to cover uniformly all $p_g$ (and also the case $\chi(S)=0$
by inspection of Part I).

\begin{theorem}[The isotrivial case]
\label{thm5}
\label{thm:iso}
Assume that $f\colon S \to B$ is isotrivial. Then 
$$|\Aut_\ZZ(S)|\leq 3, \;\;\; {\rm and \ the \ bound \  is \ sharp}.$$
There are 
arbitrary dimensional families of isotrivial properly elliptic surfaces $S$
with $p_g(S)=0$ and with   $|\Aut_\ZZ(S)|=2$, respectively with   $|\Aut_\ZZ(S)|=3$.
\end{theorem}

A posteriori, we can  infer the following from our results in this paper combined with those from \cite{CFGLS24}:

\begin{theorem}
\label{thm:abelian}
For any algebraic properly elliptic  surface $S$, $\Aut_\ZZ(S)$ is abelian;
 and $\Aut_\QQ(S)$ is abelian  and 2-generated if $\chi(S) >0$.
\end{theorem}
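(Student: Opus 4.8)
The plan is to prove the statement by assembling the structural results already established, organizing a case analysis according to $\chi(S)$, to whether $f$ is isotrivial, and to $p_g(S)$. Throughout I use the tautological inclusion $\Aut_\ZZ(S)\subseteq\Aut_\QQ(S)$, so that abelianness of $\Aut_\QQ(S)$ suffices whenever it is available; and I dispose of the case $\chi(S)=0$ by quoting Part I \cite{CFGLS24}, so that I may assume $\chi(S)>0$ from now on.

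Two regimes come with an abelian overgroup already on record. If $p_g(S)>0$, then Theorem~\ref{thm:p_g>0}(i) realizes $\Aut_\QQ(S)$ as a subgroup of $\MW(J(S))_\tor$, a finite $2$-generated abelian group, so $\Aut_\ZZ(S)$ is abelian. If $f$ is isotrivial (for any value of $p_g$), then Theorem~\ref{thm5} gives $|\Aut_\ZZ(S)|\le 3$, whence $\Aut_\ZZ(S)$ is cyclic, hence abelian. It remains to treat the single case $p_g(S)=0$ with $f$ non-isotrivial; here necessarily $\chi(S)=1$, $q(S)=0$, $B\cong\PP^1$ and $J(S)$ is a rational elliptic surface, so $H^2(S,\QQ)=\NS(S)\otimes\QQ$.

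In this last case I would first pass to the fibrewise picture: by Theorem~\ref{thm:ct}(i) every $\sigma\in\Aut_\ZZ(S)$ preserves each fibre, so $\sigma\in\Aut_B(S)$ and acts on the generic fibre $\sC$ over $\sK=\CC(B)$; by \eqref{autell}, after base change, $\sigma$ is a translation composed with a multiplier $\zeta\in\mu_r$. The crucial step is to show $\zeta=1$, so that $\Aut_\ZZ(S)$ consists of commuting fibre-translations. Since $f$ is non-isotrivial the $j$-invariant is non-constant, no fibre carries an automorphism of order $>2$, and therefore $\zeta\in\{\pm 1\}$; so I only have to exclude the fibrewise inversion $\zeta=-1$. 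Such a $\sigma$ negates the Mordell–Weil group and reflects the components of the reducible fibres, and by the Shioda–Tate relation $\rank\MW(J(S))=8-\sum_v(m_v-1)$ at least one of these actions is nontrivial: either there is a non-$2$-torsion section, or there is a reducible fibre whose components are genuinely permuted. Hence $\sigma$ acts nontrivially on $\NS(S)=H^2(S,\QQ)$, contradicting numerical (a fortiori cohomological) triviality. Thus $\zeta=1$ and $\Aut_\ZZ(S)$ is abelian.

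The main obstacle is exactly this multiplier-killing step for $p_g=0$. When $p_g(S)>0$ the multiplier is detected directly on $H^{2,0}(S)=H^0(S,\Omega^2_S)\neq 0$, where it acts by $\zeta$, so numerical triviality forces $\zeta=1$ at once; with $p_g(S)=0$ that form has vanished and one is compelled to read $\zeta$ off the N\'eron–Severi group, through the action of the inversion on sections and on fibre components. This is where non-isotriviality is indispensable: for an isotrivial fibration the monodromy of $R^1f_*\QQ$ is finite, and the reflection on fibre components together with the negation of $\MW$ can cancel out cohomologically, so that a fibre-inversion can itself be cohomologically trivial---this is one of the mechanisms behind the order $2$ and order $3$ examples of Theorem~\ref{thm5}---and there the multiplier simply cannot be annihilated. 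Consequently the isotrivial and non-isotrivial regimes genuinely require separate treatment, and the one delicate point to verify carefully is that, in the non-isotrivial case, the fibrewise inversion never acts trivially on all of $\NS(S)$ simultaneously, which the Shioda–Tate dichotomy above secures.
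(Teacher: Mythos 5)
Your reduction agrees with the paper's proof in three of the four regimes: $p_g(S)>0$ via Theorem \ref{thm:p_g>0}(i), the isotrivial case via Theorem \ref{thm5}, and the appeal to Part I for $\chi(S)=0$. The gap lies in the remaining case $p_g(S)=0$ with $f$ non-isotrivial, where your ``Shioda--Tate dichotomy'' is false. It is not true that a fibrewise inversion on a non-isotrivial rational elliptic surface must either negate a non-$2$-torsion section or permute the components of some reducible fibre: take $J(S)$ with singular fibres $\II^*+2\I_1$, or $\III^*+\I_2+\I_1$, or $\I_4^*+2\I_1$. These are non-isotrivial (the $\I_1$ fibres force non-constant $j$), their Mordell--Weil groups are trivial or $\ZZ/2\ZZ$, and inversion acts trivially on the component groups $A_F\in\{0,\,\ZZ/2\ZZ,\,(\ZZ/2\ZZ)^2\}$ of all reducible fibres, hence preserves every fibre component and every section class --- so it is numerically trivial. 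Indeed these are exactly the surfaces of \cite[Table 3]{DM22} exploited in Remark \ref{rmk: Enriques1} to produce numerically trivial involutions $(-1)_S$. Your claimed contradiction with numerical triviality therefore does not materialize, and $\zeta=-1$ is not excluded by your argument. The case can be repaired, but by a different mechanism: when there is an additive fibre, Lemma \ref{lem: add fib} shows that no nontrivial fibrewise translation is numerically trivial, so any two elements of $\Aut_{B,\QQ}(S)$ with $\zeta=-1$ compose to the identity and $|\Aut_{B,\QQ}(S)|\leq 2$ (Proposition \ref{prop: pg=0 bd J(S)}(2)), which is abelian for order reasons; when $J(f)$ is semistable, $\Aut_\QQ(J(S))$ is trivial and $\Aut_\QQ(S)$ embeds into the abelian group $t(\MW(J(S)))$. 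This two-pronged argument, organized through Corollary \ref{cor: rational J(S) AutSB}, is what the paper actually uses.

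A secondary point: quoting Part I does not by itself dispose of $\chi(S)=0$. By the paper's own account, \cite[Thm 1.2]{CFGLS24} asserts abelianness of $\Aut_\ZZ(S)$ \emph{except} in one case ($G\cong\ZZ/2m\ZZ$ with $m$ odd acting by translations on $E$, and $\Aut_\ZZ(S)/\Aut^0(S)\cong\ZZ/2\ZZ$), and the paper finishes that case with an explicit commutation argument on $C\times E$; your write-up would need to supply the same.
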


\subsection{Organization of the paper}

Section \ref{s:prel} reviews preliminaries about automorphism groups, especially numerically trivial automorphisms,
and elliptic fibrations, such as singular fibres and Mordell--Weil groups.

%

Starting in Section \ref{sec: Jacobian MW}, we treat elliptic fibrations in full generality,
but with a certain focus on jacobian fibrations, i.e.,  those admitting a section.
In particular, we study the interplay of $\Aut(S)$ and $\MW(J(S))$
for an elliptic surface $S$ and its Jacobian surface $J(S)$.

Section \ref{s:MW} bounds the size of the torsion subgroup of $\MW(X)$
for a jacobian elliptic surface $X$
in terms of the genus of the base curve.

This is used in Section \ref{s:upper_bounds} to prove Theorem \ref{thm2} (i)
(and eventually (ii), (iii)) as well as most of Theorem \ref{thm:ct}.

Section \ref{s:constr} introduces our main technique to construct
explicit properly elliptic surfaces with given groups of numerically trivial automorphisms.
In particular, this paves the way to the proof of Theorem \ref{thm} in Section \ref{s:pf1}.

Section \ref{s:iso} focusses on the group of numerically trivial automorphisms in the isotrivial case:
here we can make detailed use of the arguments of \cite{CFGLS24},
but also take into account the reducible fibres
(by methods which will also be used in the concluding sections of the paper);
this allows us to  complete the proof of Theorem \ref{thm:p_g=0}.

We turn to cohomologically trivial automorphisms in Sections \ref{s:2}, \ref{s:3}
by explicitly constructing the non isotrivial elliptic  surfaces required to complete the proof of Theorem \ref{thm:ct}.

Finally the isotrivial case of Theorem
 \ref{thm5} is handled extensively in Section \ref{s:iso2}
 before the paper concludes with the proof of Theorem \ref{thm:abelian} in Section \ref{s:ab}.

\begin{convention}
Throughout the paper, with the exception of subsections 2.1-2.3, we work  with algebraic surfaces  $S$ over the complex numbers,
although our constructions work over arbitrary fields
as long as the characteristic does not divide the group order.

Root lattices are assumed to be negative definite,
and $\sim$ is meant to indicate numerical equivalence of divisors.

\end{convention}

\section{Notation and preliminaries}
\label{s:prel}

\subsection{Special subgroups of the automorphism group}

 Let $X$ be a compact complex manifold, and $R$ a commutative ring, which may be $\ZZ$, or   $\QQ$. 

We  denote by $\Aut(X)$ the Lie group of   biholomorphic automorphisms of $X$, and consider its action on the cohomology ring $H^*(X, R)$.

The main concern of this paper is the study of  the kernel of such an action:
\[
\Aut_R(X):=\left\{\sigma\in\Aut(X) \mid \sigma^*\xi = \xi \text{ for any $\xi\in H^*(X, R)$}\right\}.
\]
The elements of $\Aut_{\ZZ}(X)$ is said to be  \emph{cohomologically trivial}, while those of $\Aut_{\QQ}(X)$ are said to be \emph{numerically trivial}. It is clear that
\[
\Aut_\ZZ(X)\subset\Aut_{\QQ}(X),
\]
and these coincide if $H^*(X, \ZZ)$ has no torsion.

If $X$ is a compact K\"ahler manifold, then $\Aut_{\QQ}(X)$ has finitely many components by \cite{Lie78} or \cite{Fuj78},
hence  $\Aut_\QQ(X)$ is finite unless there are vector fields on $X$.

For  an algebraic  properly elliptic surface $S$, the existence of vector fields is equivalent to  being pseudo-elliptic, which means that  $S = (C \times E)/G$ is isogenous to a product, and with $G$ acting by translations on the elliptic
curve $E$: in particular $\chi (S)=0$,
and we have  the first of the two cases  studied in Part I,  \cite{CFGLS24}.

In this paper, we will thus focus on the case $\chi (S)>0$.

Occasionally, we use the \emph{group of 
  automorphisms trivial on holomorphic-cohomology}:
\[
\Aut_\sO(X):=\left\{\sigma\in\Aut(X) \mid \sigma^*\xi = \xi \text{ for any $\xi\in H^*(X, \sO_X)$}\right\};
\]
key examples, also for our considerations, are provided by translations by sections on elliptic surfaces.
Via the Hodge decomposition, one has the inclusion $\Aut_\QQ(X)\subset \Aut_\sO(X)$.
  
Suppose that there is a fibration $f\colon X\rightarrow B$, that is, a surjective morphism between complex spaces with connected fibres. Then we may consider the \emph{group of fibration preserving automorphisms}
\[
\Aut_f(X):=\left\{\sigma\in\Aut(X) \mid \text{$\exists\,\sigma_B\in \Aut(B)$ such that $\sigma_B\circ f = f\circ \sigma$ }\right\}.
\]
There is a natural homomorphism $r_B\colon \Aut_f(X)\rightarrow \Aut(B)$ such that, for $\sigma\in \Aut_f(X)$, its image $\sigma_B:=r_B(\sigma)$ satisfies $\sigma_B\circ f = f\circ \sigma$. The kernel 
$$\Aut_B(X):=\ker(r_B)
$$ 
is the \emph{group of fibre preserving automorphisms.}

Now let $S$ be a smooth projective surface, and $f\colon S\rightarrow B$ a relatively minimal elliptic fibration over a smooth projective curve. Suppose that the Kodaira dimension of $S$ is  nonzero.

 Then  the fibration $f$ is induced by $|mK_S|$ for some 
  nonzero integer $m$
 and hence preserved by the whole $\Aut(S)$, that is, $\Aut(S)=\Aut_f(S)$. Therefore, we have an exact sequence
\[
1\rightarrow \Aut_B(S) \rightarrow \Aut(S)\xrightarrow{r_B} \Aut(B)
\]
For ease of notation, denote 
\[
\Aut_{B, R}(S):= \Aut_B(S) \cap \Aut_R(S),\quad\Aut_R(S)|_B=\im( \Aut_R(S) \xrightarrow{r_B} \Aut(B)).
\]
Then we have an exact sequence
\begin{equation}\label{eq: exact sequence Aut_R}
1\rightarrow \Aut_{B, R}(S) \rightarrow \Aut_R(S) \rightarrow \Aut_R(S)|_B \rightarrow 1
\end{equation}
In order to give an upper bound for $|\Aut_R(S)|$, it suffices to bound $|\Aut_{B, R}(S)|$  and $|\Aut_R(S)|_B|$ from above.

Similarly, we introduce the notation:
\begin{equation}
\Aut_{B, \sO}(S):=\Aut_B(S)\cap \Aut_\sO(S).
\end{equation}

\subsection{Notation and basic formulae}

Establishing some further notation,
we let 
\begin{eqnarray} 
\label{eq:f}
f\colon S \ra B
\end{eqnarray}
be a relatively minimal elliptic surface $S$,
and we assume that $\chi(S) >0$, so not all the fibres can be multiple of a smooth elliptic curve
(this is the case when $\chi(S)=0$, as treated in Part I). In this case we have that the genus of $B$  equals the irregularity 
$q(S)= h^1 (\hol_S)$.

We denote by $s$ the number of multiple fibres, and by $ m_1, \dots , m_s$ their multiplicities:
for $i=1, \dots, s$, we let $F_i$ be the corresponding multiple fibre, which can be written as $ F_i = m_i F'_i$
 where $F'_i$ cannot be written as a multiple of an effective divisor.

We have the Kodaira canonical bundle formula

\begin{equation}\label{eq: canonical bundle formula}
K_S = f^*(K_B+L) + \sum_{1\leq i\leq s} (m_i-1) F'_i
\end{equation}
where the $m_iF'_i$ are as above  the multiple fibres of $f$, and $\deg L =\chi(S)>0$. 

If $S$ has Kodaira dimension $1$, we have 
\begin{equation}
\deg(K_B+L) +\sum_{1\leq i\leq 
s
}
\left(1-\frac{1}{m_i}\right) = 2g(B)-2+\chi(S) +\sum_{1\leq i\leq 
s
}\left(1-\frac{1}{m_i}\right)>0.
\end{equation}
This formula implies that, in the case where $B $ is rational ($q(S)=0$ in our case), and $\chi(S) =1$,
then there are at least two multiple fibres.

We denote further by $s_{\red}$ the number of reducible fibres which are not multiple,
and by $s_{\irr}$ the number of irreducible and not multiple singular fibres.

Hence the total number of singular fibres equals $s' : =  s + s_{\red} + s_{\irr}$.

By the Noether formula, and since $K^2_S=0$, we have 
\begin{equation}\label{eq: e(S)}
12\chi(S) = 
e(S) = \sum_{i=0}^{ s'} e(f^* b_i)
\end{equation}
where $f^* b_i$ runs through all the (finitely many) singular fibres.

\subsection{Invariance of the fibres by a numerically trivial automorphism}

Let $\Psi \in \Aut_{\QQ}(S)$ be a numerically trivial automorphism. Then (see Part I, \cite{CFGLS24}, Section 2):

\begin{enumerate} 
\item
If $C$ is an irreducible curve with $C^2 <0$, then $\Psi (C)=C$;
\item
if there is a fibration $ f\colon S \ra B$ over a curve, then $\Psi $ preserves it, inducing an action on $B$;
\item
any fibre $F$ such that $F_{red}$ is reducible is left invariant ($\Psi (F)=F$);
\item
if a fibre is multiple of a smooth elliptic curve $ F= m F'$, and $\Psi \in \Aut_{\ZZ}(S)$,
then $\Psi (F)=F$ if $q(S) = {\rm genus} (B)$: this holds unless all the fibres are smooth elliptic curves, i.e.\ unless $\chi(S)=0$;
\item
$\Psi$ acts as the identity on $B$ if $B$ has genus $\geq 2$, or if $B$ has genus $1$ and there is an invariant fibre,
or if $B$ has genus $0$ and there are at least three  invariant fibres;
\item
if
$B$ has genus $0$ and there are two   invariant fibres, then $\Psi$ belongs to a cyclic group of automorphisms of $\PP^1$,
fixing two points.
\end{enumerate}

A first consequence is:

\begin{lemma}
\label{lem2.1}
Let $f\colon S \ra B$ be a relatively minimal  elliptic fibration, admitting a singular fibre which is not the multiple of a smooth elliptic curve 
(that is, $e(S), \chi(S) >0$): then
$$ H_1(S, \ZZ) = H_1(B, \ZZ) \oplus (\textstyle\bigoplus_{i=1}^s (\ZZ/ m_i\ZZ)\ga_i) / \ZZ (\sum_{j=1}^s \ga_j),$$
and $\Psi \in\Aut_{\ZZ}(S)$ acts trivially on $ H_1(B, \ZZ) $.

\end{lemma}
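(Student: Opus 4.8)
The plan is to establish the two assertions of the lemma separately: first the computation of $H_1(S,\ZZ)$, and then the triviality of the induced action of $\Psi$.

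For the homology computation, I would use the structure of the relatively minimal elliptic fibration $f\colon S \to B$ together with the presence of a singular fibre that is not a multiple of a smooth elliptic curve. The strategy is to analyze the map $f_*\colon H_1(S,\ZZ) \to H_1(B,\ZZ)$. Since $S$ is fibred over $B$ with connected fibres, this map is surjective, and a section-like splitting (or a multisection argument, available because $\chi(S)>0$ rules out the quasi-bundle case) gives $H_1(B,\ZZ)$ as a direct summand. The kernel is generated by the vanishing cycles of the fibres. The key point is that the presence of an irreducible singular fibre of non-additive type, or more generally a fibre that is not a multiple of a smooth elliptic curve, forces the generic fibre class to become torsion of a controlled nature: the smooth fibre $F$ is homologically trivial up to the relations imposed by the multiple fibres. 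Each multiple fibre $F_i = m_i F_i'$ contributes a generator $\gamma_i$ of order $m_i$ (the class of $F_i'$), and the relation $\sum_j \gamma_j = 0$ arises because the classes $m_i F_i'$ are all homologous to the general fibre $F$, which is itself trivial in the quotient once an honest (singular, non-multiple) fibre is present to kill it. This yields precisely
\[
H_1(S,\ZZ) = H_1(B,\ZZ) \oplus \Bigl(\bigoplus_{1}^{s} (\ZZ/m_i)\gamma_i\Bigr)\Big/ \ZZ\Bigl(\sum_1^s \gamma_j\Bigr).
\]
I would make this rigorous via the Leray spectral sequence for $f$, or alternatively by a Mayer--Vietoris / CW argument decomposing $S$ over a disc covering of $B$, keeping careful track of the monodromy and the local contributions of the multiple fibres.

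For the invariance under $\Psi \in \Aut_\ZZ(S)$, I would argue as follows. By item (2) of the list in Subsection 2.4, $\Psi$ preserves the fibration and induces an automorphism $\Psi_B$ of $B$; since $\Psi$ is cohomologically trivial, $\Psi_B$ acts trivially on $H_1(B,\ZZ)$, so the $H_1(B,\ZZ)$ summand is fixed. For the torsion part, the multiple fibres $F_i$ are intrinsic data of the fibration, so $\Psi$ permutes the set $\{F_i'\}$; because $\Psi$ is cohomologically (in particular numerically) trivial and the singular fibre of non-multiple type is preserved (item (3) or (4)), $\Psi$ cannot permute the multiple fibres nontrivially—each $F_i$ has its own multiplicity $m_i$ and position over $B$, and $\Psi_B$ fixing the relevant points (items (4), (5)) pins each $F_i'$ to itself. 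Hence $\Psi$ fixes each generator $\gamma_i$, and therefore acts as the identity on all of $H_1(S,\ZZ)$.

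The main obstacle I anticipate is the homology computation, specifically justifying the exact relation $\sum_j \gamma_j = 0$ and the orders $m_i$ with full rigor. One must correctly account for the interaction between the local monodromy around the non-multiple singular fibre (which is what makes the general fibre class torsion rather than free) and the torsion contributed by the multiple fibres; a naive application of the spectral sequence can obscure the single relation among the $\gamma_i$. I expect the cleanest route is the explicit CW / fundamental-group presentation, computing $\pi_1(S)$ from $\pi_1$ of the base minus critical values together with the multiple-fibre relations $\gamma_i^{m_i}=1$ and the product relation, then abelianizing; the presence of a non-multiple singular fibre is exactly what collapses the would-be free generator coming from the general fibre.
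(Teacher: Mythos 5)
Your proposal is correct and follows essentially the same route as the paper, which disposes of the lemma in one line by quoting Lemma 2.5 of Part I for the $H_1$ formula and facts (4), (5) of the preceding list for the invariance of the generators (each multiple fibre is preserved since not all fibres are smooth elliptic, and the induced action on $B$, hence on $H_1(B,\ZZ)$, is trivial). One small caveat: the generators $\gamma_i$ are classes of loops around the multiple fibres, not the divisor classes of the $F_i'$ (which live in $H_2(S,\ZZ)$), so the justification of the relation $\sum_j \gamma_j=0$ via ``$m_iF_i'$ homologous to $F$'' is phrased in the wrong group --- but your fallback through the presentation of the orbifold fundamental group and its abelianization, with the non-multiple singular fibre killing the image of $\pi_1$ of the general fibre, is exactly the standard argument underlying the cited lemma.
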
 

\begin{proof}
This is an immediate consequence of Lemma 2.5 of Part I 
and of (4), (5) above.
\end{proof}

The next table collects the possible types for $F'$ in Kodaira's notation,
together with the corresponding Dynkin types obtained 
from the dual graph by omitting a single simple fibre component (as indicated in Figures \ref{Fig:In}, \ref{Fig:add}):
$$
\begin{array}{c||c|c|c|c|c|c|c|c}
\text{Kodaira type} & \I_n \,  (n>0) &
\I_n^* \, (n\geq 0) & \II & \III & \IV & \IV^* & \III^* & \II^*\\
\hline
\text{Dynkin type} & A_{n-1} & D_{n+4} & A_0 & A_1 & A_2 & E_6 & E_7 & E_8
\end{array}
$$

Here types $\I_1$ and $\II$ refer to the nodal resp.\ cuspidal cubic,
the only irreducible singular fibre types $F'$.
The reducible fibres relevant to this paper are displayed below in Figures \ref{Fig:In} and \ref{Fig:add},
with simple fibre components (i.e.\ of multiplicity one) printed thin and
labels indicating the multiplicities of the other components (printed thick).

\begin{figure}[ht!]
\setlength{\unitlength}{.30in}
\begin{picture}(10,2.7)(0,-0.3)

\thinlines

\put(0.9,1.8){\makebox(0,0)[l]{$\I_2$}}

\put(0.5,1){\circle{1}}
\put(.5,0){\line(0,1){2}}

\put(3,0.2){\line(1,0){2}}
\put(3.1,0){\line(1,2){1}}
\put(3.9,2){\line(1,-2){1}}
\put(4.4,1.8){\makebox(0,0)[l]{$\I_3$}}

\put(7,0.8){\line(1,2){0.6}}
\put(7,1.2){\line(1,-2){0.3}}
\put(9,1.2){\line(-1,-2){0.3}}
\put(8.4,2){\line(1,-2){0.6}}
\put(7.3,1.8){\line(1,0){1.4}}

\put(7.77,0.35){\makebox(0,0)[l]{$\hdots$}}
\put(6.9,-.1){\makebox(0,0)[l]{($n$ comp's)}}
\put(8.9,1.8){\makebox(0,0)[l]{$\I_n (n>3)$}}

\end{picture}
\caption{Reducible multiplicative fibres (type $\I_n\; (n>1)$)}
\label{Fig:In}
\end{figure}

\begin{figure}[ht!]
\setlength{\unitlength}{.35in}
\begin{picture}(10,5.5)(-1,-0.3)

\thinlines

\put(.5,4){\circle{1}}
\put(0,3){\line(0,1){2}}

\put(0.7,4.8){\makebox(0,0)[l]{$\III$}}

\thinlines
\put(2.8,4){\line(1,0){1.6}}
\put(3.1,3){\line(1,2){1}}
\put(3.1,5){\line(1,-2){1}}
\put(4.3,4.8){\makebox(0,0)[l]{$\IV$}}

\thinlines

\put(6.4,4.7){\line(4,1){1}}
\put(6.4,4.4){\line(4,1){1}}
\put(6.4,3.3){\line(4,-1){1}}
\put(6.4,3.6){\line(4,-1){1}}

\thicklines
\put(6.6,5){\line(0,-1){2}}
\put(7.7,4.8){\makebox(0,0)[l]{$\I_0^*$}}
\put(6.45,5){\tiny 2}

\thinlines
\put(.6,1.75){\line(4,-1){.8}}
\put(.6,1.15){\line(4,-1){.8}}
\put(.6,0.55){\line(4,-1){.8}}

\thicklines
\put(.2,2){\line(0,-1){2}}
\put(0,1.5){\line(4,1){1}}
\put(0,0.9){\line(4,1){1}}
\put(0,0.3){\line(4,1){1}}

\put(1.35,1.9){\makebox(0,0)[l]{$\IV^*$}}
\put(0,1.9){\makebox(0,0)[l]{{\tiny 3}}}
\put(-.1,1.3){\tiny 2}
\put(-.1,0.7){\tiny 2}
\put(-.1,.1){\tiny 2}

\thinlines

\put(3.4,1.6){\line(1,0){0.6}}
\put(3.4,0.4){\line(1,0){0.6}}

\thicklines
\put(3.2,2){\line(0,-1){2}}
\put(3.05,2.1){\makebox(0,0)[l]{{\tiny 4}}}
\put(3.62,2.1){\makebox(0,0)[l]{{\tiny 2}}}
\put(3.65,-.1){\tiny 2}

\put(3,1.8){\line(1,0){0.8}}
\put(2.9,1.65){\makebox(0,0)[l]{{\tiny 3}}}

\put(2.9,.8){\tiny 2}

\put(3,1){\line(1,0){0.7}}
\put(3,0.2){\line(1,0){0.8}}
\put(2.9,0.35){\makebox(0,0)[l]{{\tiny 3}}}

\put(3.6,2){\line(0,-1){0.6}}
\put(3.6,0){\line(0,1){0.6}}

\put(4.3,1.8){\makebox(0,0)[l]{$\III^*$}}

\thinlines
\put(7.15,0.4){\line(1,0){0.45}}

\thicklines
\put(6.6,2){\line(0,-1){2}}
\put(6.63,2.1){\makebox(0,0)[l]{{\tiny 6}}}
\put(7.03,2.1){\makebox(0,0)[l]{{\tiny 2}}}

\put(6.4,1.8){\line(1,0){0.8}}
\put(6.3,1.65){\makebox(0,0)[l]{{\tiny 4}}}
\put(6.3,.95){\makebox(0,0)[l]{{\tiny 3}}}

\put(6.4,1.1){\line(1,0){0.7}}
\put(6.4,0.2){\line(1,0){0.75}}
\put(6.3,0.35){\makebox(0,0)[l]{{\tiny 5}}}

\put(7,2){\line(0,-1){0.6}}

\put(7,0){\line(0,1){0.8}}
\put(7.05,-0.05){\makebox(0,0)[l]{{\tiny 4}}}
\put(7.3,0.15){\makebox(0,0)[l]{{\tiny 2}}}

\put(6.8,0.6){\line(1,0){0.7}}
\put(7.55,0.65){\makebox(0,0)[l]{{\tiny 3}}}

\put(7.3,0.7){\line(0,-1){0.45}}

\put(7.5,1.8){\makebox(0,0)[l]{$\II^*$}}

%
%
%
%
%
%
%
%
%
%
%
%

\end{picture}
\caption{Reducible additive singular fibres}
\label{Fig:add}
\end{figure}

 Recall that the multiple fibres can only be of type $_m\I_k$, with $ m \geq 2$, and  $k \geq 0$.
The singular fibres  of type $\I_k \, (k>0)$ are called semistable (or multiplicative), and all the others are said to be
of additive type;
 this refers to the connected component of the identity of the algebraic group of which  
 $(F')^\#$ (the smooth locus of the reduced singular fibre
$F'$) is a principal homogeneous space
as this equals the multiplicative group $\G_m \cong \CC^*$ resp.\ the additive group $\G_a\cong\CC$.

The fibration is said to be semistable if all the fibres are semistable.
 
Taking multiplicities into account, 
the only singular fibres above with $F_{red}$ irreducible are  
$F = \phantom{}_m\I_0 \, (m \geq 2)$, $F= \phantom{}_m\I_1 \, (m\geq 1)$ or $F = \II$. Summing up,

\begin{cor}\label{base-action}
The action of $\Psi \in\Aut_{\QQ}(S)$ on the base $B$ is trivial if $B$ has genus $\geq 2$,
and also  if $B$ has genus $1$, unless all singular fibres are of type $_m\I_0$, $_m\I_1$ or $\II$.

Assume that $\chi(S)>0$.
If $\Psi \in\Aut_{\ZZ}(S)$ does not act trivially on $B$, then either $B$ has genus $1$ and all singular fibres
are of type $\I_1$ or $\II$, or $B$ has genus $0$ and there are at most two singular fibres not of type  $\I_1$ or $\II$.
%
\end{cor}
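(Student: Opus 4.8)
The plan is to study the finite-order automorphism $\sigma_B:=r_B(\Psi)$ induced on the base and to decide when it can be nontrivial by pinning down the fibres it is forced to fix. I will use throughout that, by the table above, a fibre with irreducible support is of type ${}_m\I_0$, ${}_m\I_1$ or $\II$; equivalently, a fibre of any other type is reducible and therefore invariant under $\Psi$ by property~(3). I will likewise use that a multiple fibre of type ${}_m\I_0$ is invariant by property~(4), which applies here since $q(S)=g(B)$ and, as $\chi(S)>0$, not all fibres are smooth.

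For the first statement let $\Psi\in\Aut_\QQ(S)$. If $g(B)\geq 2$, then property~(5) gives $\sigma_B=\id_B$ immediately. If $g(B)=1$ and some fibre is not of type ${}_m\I_0$, ${}_m\I_1$ or $\II$, then that fibre is reducible, hence invariant by~(3), and property~(5) in the genus-one case (an invariant fibre being present) again forces $\sigma_B=\id_B$. This is precisely the first assertion.

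For the second statement let $\Psi\in\Aut_\ZZ(S)\subset\Aut_\QQ(S)$ with $\sigma_B\neq\id_B$; by the first part $g(B)\leq 1$. If $g(B)=1$, property~(5) shows that $\sigma_B$ fixes no fibre, so it permutes the singular fibres freely; there are then neither reducible fibres (invariant by~(3)) nor ${}_m\I_0$ fibres (invariant by~(4)), so every singular fibre is of type ${}_m\I_1$ or $\II$. If $g(B)=0$, property~(5) (together with~(6)) shows that $\sigma_B$ fixes at most two fibres; since every reducible and every ${}_m\I_0$ fibre is invariant, all of these lie among those two. In both cases it remains only to handle the genuine multiple fibres of type ${}_m\I_1$ ($m\geq 2$): once they are shown to be invariant, they cannot occur in genus one, and in genus zero they too are confined to the (at most) two invariant fibres, so that at most two singular fibres are not of type $\I_1$ or $\II$.

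This last point, where cohomological triviality must do strictly more than numerical triviality, is the step I expect to be the main obstacle. The multiple fibres are recorded by the torsion generators $\gamma_i$ of the preceding Lemma, on which $\Psi$ acts trivially since $\Psi\in\Aut_\ZZ(S)$ fixes $H_1(S,\ZZ)$; as $\sigma_B$ permutes the $\gamma_i$ exactly as it permutes the corresponding multiple fibres, a short computation in $\bigl(\bigoplus_i(\ZZ/m_i)\gamma_i\bigr)/\ZZ\bigl(\sum_j\gamma_j\bigr)$ rules out every orbit of length at least three. The stubborn case is an orbit of length two with $m=2$, which is invisible to the action on $H_1(S,\ZZ)$ (there the swap already acts as the identity); to exclude it I would have to analyse the action on $H^2(S,\ZZ)$ itself, whose torsion subgroup is canonically the torsion of $H_1(S,\ZZ)$ but which splits only non-canonically into free and torsion parts, and to distinguish the integral classes $[F_i']$ of the reduced multiple fibres — exactly the kind of delicate integral computation highlighted as the main difficulty in the introduction. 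Granting that all multiple fibres are thereby invariant, both cases conclude as stated.
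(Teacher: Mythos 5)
Your handling of the $\Aut_\QQ$ statement and the reduction of the $\Aut_\ZZ$ statement is essentially the paper's own argument: the corollary is obtained by ``summing up'' properties (3)--(6) together with the observation that the only fibres with irreducible support are of type ${}_m\I_0$, ${}_m\I_1$ or $\II$. That part is correct and needs no further comment.

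The genuine gap is the one you flag yourself and then wave away with ``granting that'': you never prove that $\Psi\in\Aut_\ZZ(S)$ preserves each multiple fibre of type ${}_m\I_1$, and without this both remaining claims fail to be established --- a free length-two orbit of ${}_2\I_1$ fibres is not excluded, which would break the genus-$1$ conclusion (no multiple fibres at all) and the genus-$0$ count (all reducible and multiple fibres must sit over the at most two fixed points of $\sigma_B$). Your route through the torsion of $H_1(S,\ZZ)$ cannot close this: a transposition of two double fibres acts trivially on $\bigl(\bigoplus_i(\ZZ/m_i)\gamma_i\bigr)/\ZZ\bigl(\sum_j\gamma_j\bigr)$ not only when $s=2$ but also, for instance, when $s=3$ with multiplicities $(2,2,3)$, so the obstruction simply does not live in $H_1$. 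The paper does not argue via $H_1$ here at all; it imports from Part I (Lemma 2.1 of \cite{CFGLS24}, re-cited later in Lemma \ref{lem: AutZ g=0}) the statement that a cohomologically trivial automorphism preserves every multiple fibre. The point of that lemma is that the integral classes $[F_i']\in H^2(S,\ZZ)$ of distinct reduced multiple fibres are distinct (their differences are nontrivial torsion classes), so $\Psi^*=\id$ on $H^2(S,\ZZ)$ forbids $\Psi(F_1')=F_2'$; this uses the full integral classes of the half-fibres, not merely the induced action on the torsion subgroup of $H_1$ or $H^2$ (on which any automorphism trivial on $H_1(S,\ZZ)$ automatically acts trivially), which is exactly why your computation sees nothing in the critical case. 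You would need to supply this lemma, or the reference to Part I, to complete the proof.
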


\begin{remark}
In the next subsection, we decidedly turn to elliptic surfaces which are algebraic.
In a sequel to this article, we shall consider the non-algebraic case, the main subtlety being that the Jacobian
of an elliptic surface with multiple fibres has been defined by Kodaira and Shafarevich only for algebraic elliptic surfaces. 
\end{remark}

\subsection{Singular fibres and Mordell--Weil group}
\label{ss:MW}

If $\chi(S)>0$, as assumed throughout this paper,
then necessarily there is a singular fibre 
with non-smooth support
by inspection of the Euler number formula \eqref{eq: e(S)}.

Note that the rank of the Dynkin diagram (assumed to be negative-definite by convention) equals one less than the number of fibre components
(which we denote by $m_v$ if $F=f^*v$, slightly abusing notation).

If the fibration \eqref{eq:f} admits a section, we call the elliptic surface $S$ jacobian
(note that this excludes multiple fibres).
Then the sections correspond bijectively to the rational points on the generic fibre $\mathcal C$;
after choosing a zero section denoted by $O$ or $O_S$, the sections thus form an abelian group, 
called the Mordell--Weil group and denoted by $\MW(S)$,
which is finitely generated if $\chi(S)>0$.
We refer to \cite{SS19} for more information on the theory of Mordell--Weil groups.

The Picard number of $S$ can be  expressed by the Shioda--Tate formula:
\begin{eqnarray}
\label{eq:ST}
\rho(S) = 2 + \rank\MW(S) + \sum_{v\in B} (m_v-1).
\end{eqnarray}
Here the last summands correspond exactly to the Dynkin diagrams of the reducible fibres;
usually one omits the fibre component intersecting $O$,
often called identity component and denoted by $\Theta_0$.
This paves the way towards the representation of the trivial lattice $\Triv(S)$
generated by fibre components and sections as an orthogonal sum
\begin{eqnarray}
\label{eq:Triv}
\Triv(S) = \langle O, F\rangle \oplus \bigoplus_{v\in B} (\text{Dynkin type associated to } F_v).
\end{eqnarray}
Note that the leftmost summand is unimodular of parity determined by $O^2=-\chi(S)$.

The choice of zero section $O$ fixes a group structure on the smooth fibres which carries 
over to the smooth locus $F^\#$ of any singular fibre.
We note the non-canonical isomorphism
\begin{eqnarray}
\label{eq:FFF}
F^\# \cong 
\begin{cases}
\mathbb G_m \times A_F, & \text{ if $F$ is multiplicative (type $\I_n$),}\\
\mathbb G_a \times A_F, & \text{ if $F$ is additive (all other types)}
\end{cases}
\end{eqnarray}
Here $A_F$ denotes the discriminant group of the Dynkin type $D$ corresponding to the fibre,
i.e.\ $A_F=D^\vee / D$.
Note that $A_F$ is a finite abelian group which corresponds bijectively to the simple fibre components (as indicated in Figures \ref{Fig:In}, \ref{Fig:add});
in fact, $A_F$ is cyclic unless $F$ has type $\I_n^*$ for $n\in2\NN_0$ (whence $A_F \cong (\ZZ/2\ZZ)^2$).
In addition,  $A_F$ is also endowed with the structure of a quadratic form,
called discriminant form, taking values in $\QQ/2\ZZ$. 

Throughout this paper, it will be instrumental to consider torsion sections,
and the automorphisms of $S$ given by translations (see \eqref{eq: tMW}).
Note that torsion sections are always disjoint from each other
(generally true unless the characteristic of the ground field divides the order of the section, cf.\ \cite[Prop.\ 6.33 (v)]{SS19}).
In terms of lattices, torsion sections can be encoded in the intersection pattern with the singular fibres as
\begin{eqnarray}
\label{eq:pattern}
\MW(S)_\tor \cong \Triv(S)'/\Triv(S) \subset \bigoplus_{v\in B} A_{F_v}
\end{eqnarray}
where $\Triv(S)'=(\Triv(S)\otimes\QQ)\cap\Num(S)$ denotes the primitive closure (or saturation) of $\Triv(S)$ inside $\Num(S)$.

The jacobian elliptic fibration is called extremal if the Picard number attains the maximum value $\rho(S)=h^{1,1}(S)$
while $\MW(S)$ is finite. Equivalently, $\rank \Triv(S) = h^{1,1}(S)$.
For classifications, see \cite{MP86} and \cite{SZ01}.

With respect to the geometric group structure, it follows that there is an injection
\begin{eqnarray}
\label{eq:MW->F}
\MW(S)_\tor \hookrightarrow F^\#
\end{eqnarray}
at any fibre.
In general, this implies that $\MW(S)$ can be generated by at most 2 elements.
At an additive fibre, \eqref{eq:FFF} forces \eqref{eq:MW->F} to specialize to
\begin{eqnarray}
\label{eq:FF}
\MW(S)_\tor \hookrightarrow A_F.
\end{eqnarray}
Note that this implies that $\MW(S)_\tor=\{O\}$ if there is a fibre of type $\II$ or $\II^*$,
and that  $\MW(S)_\tor$ is cyclic if there is an additive fibre of type different from $\I_n^*$ for $n\in2\NN_0$.

With a view towards numerically trivial automorphisms, we record the following essential property:

\begin{fact}
\label{fact:MW-add}
\label{fact}
At an additive fibre, 
any two distinct torsion sections meet different  components.
\end{fact}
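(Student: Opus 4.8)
The plan is to prove the contrapositive: if two torsion sections $P, Q \in \MW(S)_\tor$ meet the \emph{same} simple component of an additive fibre $F$, then $P=Q$. Since $\MW(S)_\tor$ injects into $A_F$ via \eqref{eq:FF}, and since the simple fibre components of $F$ correspond bijectively to the elements of the discriminant group $A_F$, the statement amounts to saying that the map sending a torsion section to the component it meets is precisely this injection. So the whole content is to identify ``the component met by $P$'' with ``the image of $P$ in $A_F$'' and then invoke injectivity.

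\medskip

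First I would recall from \eqref{eq:FFF} that at an additive fibre the smooth locus is $F^\# \cong \mathbb G_a \times A_F$, where $A_F$ is the discriminant group indexing the simple components, and $\mathbb G_a$ is the identity component (the component $\Theta_0$ met by the zero section $O$). A section $P$ meets $F$ transversally in a single point of $F^\#$, and the component it passes through is exactly the image of that point under the projection $F^\# \to A_F$; this projection is a group homomorphism for the fibrewise group law fixed by $O$. Next I would use \eqref{eq:MW->F}, the injection $\MW(S)_\tor \hookrightarrow F^\#$ given by restricting sections to this fibre: since $P$ and $Q$ are torsion, their images lie in the torsion of $F^\#$, and at an additive fibre the identity component $\mathbb G_a$ is torsion-free (in characteristic zero, $\mathbb G_a(\CC)=(\CC,+)$ has no nontrivial torsion). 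Hence the composite $\MW(S)_\tor \hookrightarrow F^\# \to A_F$ is injective — this is exactly \eqref{eq:FF}.

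\medskip

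Now suppose $P$ and $Q$ meet the same component, i.e.\ they have the same image in $A_F$. Then $P-Q$ (difference in $\MW(S)$, using the group structure with zero section $O$) restricts to a point of $F^\#$ lying in the identity component $\mathbb G_a$. But $P-Q$ is again a torsion section, so its image in $F^\#$ is a torsion point lying in the torsion-free group $\mathbb G_a$, forcing it to be the neutral element, i.e.\ $P-Q$ meets $\Theta_0$ at the point $O\cap F$. By the injectivity of \eqref{eq:MW->F}, a torsion section meeting the identity component at the same point as $O$ must equal $O$; thus $P-Q=O$ and $P=Q$, as desired.

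\medskip

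I expect the main obstacle to be the bookkeeping that makes ``meeting the same component'' literally equal to ``having the same image under $\MW(S)_\tor \to A_F$'': one must check that the component-assignment is a group homomorphism compatible with the fibrewise group law, so that differences of sections behave additively on components. Once this compatibility is in place, the torsion-freeness of $\mathbb G_a$ does all the real work. A secondary point worth stating carefully is why torsion sections land in $F^\#$ at all (i.e.\ why they avoid the singular points of the fibre and meet only simple components): this follows from torsion sections being disjoint from $O$ and, more generally, meeting every fibre in a smooth point, as recorded before \eqref{eq:pattern}.
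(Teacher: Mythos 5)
Your argument is correct and is essentially the paper's own reasoning: the Fact is recorded there as an immediate consequence of \eqref{eq:FF}, i.e.\ the injection $\MW(S)_\tor\hookrightarrow A_F$ obtained by combining the fibrewise injection \eqref{eq:MW->F} with the torsion-freeness of the $\mathbb G_a$-factor in \eqref{eq:FFF}, exactly as you spell out. Your extra bookkeeping (that the component map is the group-homomorphic projection $F^\#\to A_F$ and that torsion sections land in the smooth locus) is a faithful expansion of what the paper leaves implicit.
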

Indeed, if two torsion sections $P$ and $Q$ were to meet the same  component of an additive fibre $F$,
then their difference $P-Q$ would meet the identity component $\Theta_0$.
But $\Theta_0\cap F^\#\cong \G_a$ admits no non-zero torsion,
so $P$ and $Q$ intersect on $F$, contradicting the property that torsion section are disjoint (or \eqref{eq:FF}).

Turning back to an arbitrary elliptic surface $S$ with $\chi (S) >0$, 
the relative Jacobian over $B$ associates to $S$ a jacobian elliptic surface
which we denote by $J(S)$, or to keep track of  the induced fibration,
$$J(f)\colon \;\; J(S)\rightarrow B.
$$
Intriguingly, $S$ and $J(S)$ share many invariants, such as Betti numbers, Euler number, geometric genus, Euler characteristic,
 types of singular fibres (up to multiplicity, i.e.\ if $S$ has a singular fibre $F=mF'$ with $m\geq 1$ and $F'$ indivisible,
 then $J(S)$ has a fibre of type $F'$),
 and also the Picard number (cf.\ \cite[\S 5.3]{CD}).
 It follows that \eqref{eq:ST} holds for $S$ with $\MW(S)$ replaced by $\MW(J(S))$.
 Another role of $\MW(J(S))$, this time for the automorphism group of $S$,
 will be explored in Section \ref{sec: Jacobian MW}.
 
\subsection{Local behavior of an automorphism around an $m\I_k$ fibre}

In studying a fibration preserving automorphism, it is important to control the action around a fixed fibre. 
This kind of  detailed local analysis has been used rather  effectively by   \cite{Cai09} in order  to give a global bound on the number of automorphisms 
acting trivially on the second cohomology group of an elliptic surface. 
However, there is a critical inaccuracy which the following remark records.

\begin{remark}
As our construction will show, 
the crucial \cite[Lemma~1.5 (iii)]{Cai09} is not correct (cf.\ Remark \ref{rem:Cai}).
The technical flaw in its proof seems to occur in the third paragraph on page 4232 of that paper: 
in Cai's notation, the automorphism $\bar\alpha$ of $\bar S_\Delta$, which was constructed by Kodaira, 
does not necessarily descend to $S_\Delta$.
\end{remark}

As the reader can see, in Theorem \ref{thm:p_g>0} (iii), we show that 
$\Aut_\QQ(S)$ is trivial if $p_g(S) > 0$ and all multiple fibres of
$S\to B$  have smooth support (in particular,  if  there are no multiple fibres, e.g.\ if
$S$ is a jacobian elliptic surface).

In a sense, this result rescues Cai's theorem up to some extent, showing that the only trouble occurs
from the multiple fibres of type $_m\I_k$. It is therefore interesting, also for future applications, to investigate which
results hold true in the local analysis for these fibres.

Hence  we provide in this subsection two nontrivial results in the flavour of \cite[Lemma~1.5]{Cai09}.

\begin{lem}\label{lem: mI0}
Let $f\colon S\rightarrow \Delta$ be a relatively minimal elliptic fibration over the unit disk such that the central fibre $F_0=f^*0$ is the only singular fibre. Let $\sigma\in \Aut(S)$ be an automorphism of finite order preserving the fibration structure, so that  it maps a fibre to another fibre and induces an automorphism $\sigma_\Delta\in \Aut(\Delta)$. Suppose that the following holds:
\begin{enumerate}
\item $\sigma$ and $\sigma_\Delta$ have the same order.
\item $F_0$ is a multiple fibre of type $m\I_0$.
\end{enumerate}
Then $\sigma$ acts on $(F_0)_\red$ by translations. In particular, we have $e(F_0^\sigma)=e(F_0/\sigma)=0$.
\end{lem}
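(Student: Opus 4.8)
The plan is to reduce the statement to showing that the rotational part of the induced automorphism of the smooth reduced curve $(F_0)_\red$ is trivial, and to obtain this by playing the holomorphic eigenvalues of $\sigma$ at a fixed point against the arithmetic of the torsion point defining the multiple fibre. First I would invoke hypothesis (2) and Kodaira's local description: after shrinking $\Delta$, a neighbourhood of $F_0=mF_0'$ is analytically $\tilde S/\mu_m$, where $\tilde S:=E\times\tilde\Delta$, the cover $\tilde\Delta\to\Delta$ is $t'\mapsto t=(t')^m$, and $\mu_m=\langle g\rangle$ acts freely by $g(x,t')=(x+e_0,\zeta_m t')$ for a point $e_0\in E$ of exact order $m$ and $\zeta_m=e^{2\pi i/m}$. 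Thus $\tilde S\to S$ is an étale $\mu_m$-cover and $(F_0)_\red\cong E/\langle e_0\rangle$. Since $F_0$ is the only singular fibre, $\sigma_\Delta$ fixes $0$; being of finite order it is linearizable, so $\sigma_\Delta(t)=\omega t$ with $\omega$ a primitive $n$-th root of unity, where $n:=\mathrm{ord}(\sigma_\Delta)$.

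Next I would lift $\sigma$ to $\tilde S$. As $\sigma_\Delta$ fixes $0$ it lifts through $t'\mapsto (t')^m$, so $\sigma$ lifts to $\tilde\sigma\in\Aut(\tilde S)$ normalizing $\langle g\rangle$. Preserving the trivial fibration $E\times\tilde\Delta\to\tilde\Delta$, the automorphism $\tilde\sigma$ has the shape $\tilde\sigma(x,t')=(\rho(x)+\beta(t'),\eta t')$, where $\rho\in\Aut(E,0)$ has order $r\in\{1,2,3,4,6\}$ (locally constant in $t'$, hence constant), $\beta$ is holomorphic, and $\eta^m=\omega$. Comparing base actions in $\tilde\sigma g\tilde\sigma^{-1}=g^k$ forces $k\equiv 1\bmod m$, i.e.\ $\tilde\sigma$ commutes with $g$; evaluating the fibre component of this identity at $t'=0$ yields $\rho(e_0)=e_0$. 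The induced automorphism of $(F_0)_\red=E/\langle e_0\rangle$ is then $\bar\rho$ followed by a translation, and this is a translation exactly when $\rho=\mathrm{id}$ (a homomorphism $E\to\langle e_0\rangle$ must be trivial). Everything thus reduces to proving $r=1$.

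The heart of the matter, and the main obstacle, is to exclude $r\geq 2$. Suppose $r\geq 2$. Then $1-\bar\rho$ is a nonzero isogeny of $E/\langle e_0\rangle$, hence surjective, so $\sigma|_{(F_0)_\red}$ has a fixed point $P$. In local coordinates $(u,v)$ at $P$ with $(F_0)_\red=\{u=0\}$ and $f=u^m$, the identity $\sigma^*f=\omega f$ gives for the transverse eigenvalue $\mu_2$ of $d\sigma_P$ the relation $\mu_2^m=\omega$, while the tangent eigenvalue $\mu_1$ is a primitive $r$-th root of unity. Hypothesis (1) means $\sigma^n=\mathrm{id}$, so $\mu_1^n=\mu_2^n=1$; from $\mu_1^n=1$ we get $r\mid n$, and from $\mu_2^m=\omega$ (of order $n$) together with $\mu_2^n=1$ one reads off $\gcd(m,n)=1$. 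On the other hand $\rho(e_0)=e_0$ with $\mathrm{ord}(e_0)=m\geq 2$ is very restrictive: the fixed subgroup $\Fix(\rho)=\ker(1-\rho)$ has exponent $2,3,2,1$ for $r=2,3,4,6$ respectively, so $r=6$ forces $e_0=0$ and is excluded, while otherwise $m$ divides this exponent and hence $m\mid r$. Combined with $r\mid n$ this gives $m\mid n$, contradicting $\gcd(m,n)=1$ since $m\geq 2$. Therefore $r=1$.

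Finally, with $\rho=\mathrm{id}$ the map $\sigma|_{(F_0)_\red}$ is a translation of the elliptic curve $(F_0)_\red$; such a translation is either fixed-point free or the identity, so in both cases $e(F_0^\sigma)=0$. The only step demanding genuine care is the passage from $\rho(e_0)=e_0$ to $m\mid r$, namely the identification of the fixed subgroups of the order $2,3,4,6$ automorphisms of $E$ and the verification that the resulting divisibility clashes with the eigenvalue constraint $\gcd(m,n)=1$; the rest is bookkeeping on the étale cover $\tilde S\to S$.
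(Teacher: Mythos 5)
Your proof is correct, and while it opens exactly as the paper does --- with Kodaira's local model of the $_m\I_0$ fibre as the free quotient of $E\times\tilde\Delta$ by $(x,t')\mapsto(x+e_0,\zeta_m t')$, i.e.\ the normalization of the fibre product that the paper's diagram produces, after which both arguments reduce to killing the rotational part $\rho$ of the lift $\tilde\sigma$ --- the mechanism you use for that reduction is genuinely different. The paper adjoins the quotient $\bar S=S/\langle\sigma\rangle$ on the other side, invokes $\pi_1(\Delta\setminus 0)\cong\ZZ$ together with hypothesis (1) to see that $\tilde S\to\bar S$ is cyclic Galois of order $mk$ over the punctured disk, and then notes that a cyclic group of automorphisms of the elliptic curve $\tilde F_0$ whose index-$k$ subgroup is generated by a translation of exact order $m\geq 2$ must consist entirely of translations (if the generator were $\rho\circ t_c$ with $\rho\neq\mathrm{id}$, its $k$-th power would be either not a translation or the identity). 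You never form $\bar S$: assuming $\rho\neq\mathrm{id}$, you produce a fixed point of $\sigma$ on $(F_0)_\red$ via surjectivity of $1-\bar\rho$ and play the local eigenvalues there ($\mu_1$ a primitive $r$-th root of unity and $\mu_2^m=\omega$, giving $r\mid n$ and $\gcd(m,n)=1$) against the arithmetic constraint that $\rho(e_0)=e_0$ with $\mathrm{ord}(e_0)=m\geq 2$ forces $m\mid r$ and excludes $r=6$; the resulting $m\mid n$ contradicts $\gcd(m,n)=1$. All the individual steps check out (the exponents $2,3,2,1$ of $\ker(1-\rho)$ for $r=2,3,4,6$ are right, and $\mu_2^m=\omega$ follows from $f\circ\sigma=\omega f$ with $f=u^m\cdot(\text{unit})$). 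Your route is more local and computational, and it is essentially the eigenvalue-at-a-fixed-point technique that the paper only deploys later, in the harder Lemma \ref{lem: mIk}; the paper's route is shorter once the covering diagram is set up and makes transparent that hypothesis (1) is exactly what guarantees the total local monodromy group is cyclic of order $mk$.
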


\begin{proof}
Let $r$ be the order of $\sigma$ and $\sigma_\Delta$. Then  we can form the following diagram
\[
\begin{tikzcd}
\tS\arrow[rd, "\tilde f"'] \arrow[r, "\nu"]& S\times_{\Delta, \phi} \Delta \arrow[r, "\Phi"] \arrow[d] & S\arrow[d, "f"] \arrow[r, "\Psi"]& \bar S:=S/\langle\sigma\rangle \arrow[d, "\bar f"]\\
&  \Delta\arrow[r, "\phi"] & \Delta\arrow[r,"\psi"] & \Delta \cong \Delta/\langle\sigma_{\Delta}\rangle
\end{tikzcd}
\]
where,  after choosing a coordinate $z$ in which the action of $\sigma_\De$ is linear, $\phi(u)=u^m$, $\psi(z) = z^r$,  and $\nu$ is the normalization. When restricted to $\Delta\setminus\{0\}$, the horizontal maps are covering maps:
\[
\begin{tikzcd}
\tS \setminus \tilde f^{-1}(0)\arrow[d, "\tilde f"'] \arrow[r, "\Phi"]& S\setminus f^{-1}(0)\arrow[d, "f"] \arrow[r, "\Psi"]& \bar S \setminus \bar f^*(0)\arrow[d, "\bar f"]\\
\Delta\setminus\{0\}\arrow[r, "\phi"] & \Delta\setminus\{0\}\arrow[r, "\psi"] &\Delta \setminus\{0\}
\end{tikzcd}
\]
Using the universal property of fibre products, one has an induced morphism 
\[
\mu\colon \tS \setminus \tilde f^{-1}(0) \rightarrow \bar S \setminus \bar f^*(0) \times_{\Delta, \psi\circ\phi} \Delta.
\]
Since $\deg\Phi = |\sigma| = |\sigma_\Delta| = \deg\phi$, one sees that $\deg\mu =1$ and hence $\mu$ is an isomorphism. 

Since $\pi_1(\Delta\setminus \{0\})\cong \ZZ$, any of its covering spaces is regular with cyclic Galois group. In our case, the covering maps $\psi$, $\phi$, and $\psi\circ\phi$ are Galois with Galois groups:
\[
\Gal(\psi) = \langle \sigma_\Delta\rangle\cong \ZZ/r\ZZ, \,\quad \Gal(\phi)\cong \ZZ/m\ZZ \cong r \ZZ/mr \ZZ< \Gal(\psi\circ\phi)\cong \ZZ/m r\ZZ
\]
It follows that the corresponding covering maps $\Psi$, $\Phi$, $\Psi\circ\Phi$ are also Galois with with Galois groups:
\[
\Gal(\Psi) = \langle\sigma\rangle\cong \ZZ/r\ZZ, \,\quad\Gal(\Phi)\cong \ZZ/m\ZZ < \Gal(\Psi\circ\Phi)\cong \ZZ/mr\ZZ.
\]
Consequently, the branched covering $\tS \rightarrow \bar S$ enjoys the same property.

Note that the central fibre $\tilde F_0$ of $\tilde f$ is a smooth elliptic curve. Since $F_0$ is of type $m\I_0$, the group $\Gal(\Phi)\cong \ZZ/m\ZZ$ acts on $\tilde F_0$ by translations. It is now evident that the group $\Gal(\Psi\circ\Phi)\cong \ZZ/mr\ZZ$ necessarily acts on $\tilde F_0$ by translations. It follows that $\Gal(\Psi) =\langle \sigma\rangle$ acts on the quotient $F_0$ by translations.
\end{proof}

\begin{remark}
Assume that we have a singular fibre of type $mI_k$ of an elliptic fibration, and with $ k \geq 1$. 
Then the automorphism group of the reduced fibre equals $(\CC^*)^k \rtimes D_k$,
where $D_k$ is the dihedral group (of order $2k$), sitting in an exact sequence 
$$ 1 \ra \ZZ/k\ZZ \ra D_k \ra \ZZ/2\ZZ \ra 1.$$

\end{remark}

\begin{lem}\label{lem: mIk}
Let $f\colon S\rightarrow \Delta$ be a relatively minimal elliptic fibration over the unit disk such that the central fibre $F_0=f^*0$ is the only singular fibre. Let $\sigma\in \Aut(S)$ be an automorphism of finite order preserving the fibration structure, so it maps a fibre to another fibre and induces an automorphism $\sigma_\Delta\in \Aut(\Delta)$. Suppose that the following holds:
\begin{enumerate}
\item $\sigma$ and $\sigma_\Delta$ have the same order.
\item $F_0$ is a multiple fibre of type $m\I_k$, with $k\geq 0$.
\end{enumerate}
Then $\sigma$ acts 
 in such a way that the Euler characteristic of the  fixed point set of $\s$   on $(F_0)_\red$  equals $k$ or $0$,
 and the first alternative holds if $\s$ preserves the components of $(F_0)_\red$.
 \end{lem}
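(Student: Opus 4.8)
Throughout write $\ell$ for the common order of $\sigma$ and $\sigma_\Delta$ (to avoid a clash with the $k$ of $\I_k$); the case $k=0$ is exactly Lemma~\ref{lem: mI0}, so assume $k\geq 1$. The plan is to run the base-change construction of Lemma~\ref{lem: mI0} while keeping track of the symmetry induced on the cycle of components. First I would reproduce the diagram of that lemma: the base change $\phi(u)=u^m$ followed by normalization yields a relatively minimal $\tilde f\colon\tS\to\Delta$ carrying an action of a cyclic group $G\cong\ZZ/m\ell\ZZ$, with subgroup $H:=\Gal(\Phi)\cong\ZZ/m\ZZ$ satisfying $\tS/H=S$ and with $G/H\cong\ZZ/\ell\ZZ$ acting on $S$ through $\sigma$; here the equality $\mathrm{ord}(\sigma)=\mathrm{ord}(\sigma_\Delta)$ is precisely what forces $\Psi\circ\Phi$ to be Galois with \emph{cyclic} group of order $m\ell$. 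The only change from $k=0$ is the central fibre: since the multiplicity is absorbed by the base change, $\tS\to S$ is étale of degree $m$ over a neighbourhood of $F_0$, so $\tilde F_0\to(F_0)_{\red}$ is the connected $m$-fold cyclic étale cover of the $k$-cycle $\I_k$. Hence $\tilde F_0$ is of type $\I_{mk}$, and $H$ acts on it freely as the rotation of the $mk$-cycle by $k$ steps (up to translations along the components); because $m\geq 2$, this rotation is \emph{nontrivial}.

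The crucial step is to show that $\sigma$ acts on $(F_0)_{\red}=\I_k$ by a rotation of the cycle, i.e.\ without reversing the cyclic order of its components. Using $\Aut(\I_{mk})\cong(\CC^*)^{mk}\rtimes D_{mk}$, I would pass to the homomorphism $G\to D_{mk}$ recording the symmetry induced on the $mk$-cycle. Its image is cyclic and, by the previous paragraph, contains a nontrivial rotation, namely the image of $H$, of order $m\geq 2$. But every cyclic subgroup of a dihedral group is either contained in the rotation subgroup or has order $2$ and is generated by a single reflection; as the image of $G$ is not of the latter kind, it consists entirely of rotations. Consequently a generator of $G$, and hence its descent $\sigma$ to the quotient cycle $\tilde F_0/H=\I_k$, acts as a rotation (possibly trivial) and in particular does not reverse the cyclic order.

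With reflections excluded, the Euler-characteristic count is elementary and produces exactly the two asserted values. If $\sigma$ preserves every component, then it fixes all $k$ nodes and, on each $\Theta_i\cong\PP^1$, it fixes the two nodes and acts through $\CC^*$; whether this action is trivial or a nontrivial root-of-unity scaling, one has $e(\Fix(\sigma)\cap\Theta_i)=2$, so inclusion--exclusion along the $k$-cycle gives
\[
e\bigl(\Fix(\sigma)\cap(F_0)_{\red}\bigr)=\sum_{i}e(\Fix(\sigma)\cap\Theta_i)-k=2k-k=k .
\]
If instead $\sigma$ permutes the components nontrivially, then it is a nontrivial rotation of the cycle, so it fixes no component and no node, and a point of any $\Theta_i$ is carried into a different component; thus $\sigma$ has no fixed point on $(F_0)_{\red}$ and $e(\Fix(\sigma)\cap(F_0)_{\red})=0$. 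This is precisely the claimed dichotomy, the value $k$ occurring exactly when the components are preserved.

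The main obstacle is the middle step. A priori $\sigma$ could act on $\I_k$ by an orientation-reversing, inversion-type symmetry, whose fixed locus on $\I_k$ has Euler characteristic $2$, $3$ or $4$, and the statement would then fail; such symmetries genuinely occur when $m=1$ (a non-multiple $\I_k$), consistent with the gap in \cite[Lemma~1.5(iii)]{Cai09} recorded above. What rules them out is the interplay of the two hypotheses: the equality of orders supplies the \emph{cyclic} overgroup $G$, while the multiplicity $m\geq 2$ makes $H$ contribute a nontrivial rotation, and a cyclic group can never combine a nontrivial rotation with a reflection.
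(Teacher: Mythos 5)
Your proof is correct and follows essentially the same route as the paper's: the same base change by $u\mapsto u^m$ and $z\mapsto z^{\ell}$, the same identification of the three cyclic Galois groups, and the same key observation that a cyclic subgroup of $\Aut(\I_{mk})$ containing the nontrivial rotation contributed by $\Gal(\Phi)\cong\ZZ/m\ZZ$ must land inside the rotation subgroup of $D_{mk}$, so that $\sigma$ cannot reverse the cyclic order of the components of $(F_0)_{\red}$. Where you genuinely diverge is the endgame: the paper computes $e(\Fix(\sigma))$ by first proving, via a local-coordinate computation at the nodes, that $\sigma$ cannot act as the identity on two neighbouring components, whereas your inclusion--exclusion $\sum_i e(\Fix(\sigma)\cap\Theta_i)-k=2k-k=k$ yields the value $k$ without that exclusion (the count is insensitive to whether adjacent components are pointwise fixed, since two adjacent fixed copies of $\PP^1$ contribute $3$ to the Euler characteristic while absorbing $3$ nodes); this is a small but real simplification of the paper's argument. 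The only caveat is the degenerate case $k=1$, where $(F_0)_{\red}$ is an irreducible nodal curve rather than a cycle of smooth rational curves, so the literal identity $e(\Fix(\sigma)\cap\Theta_i)=2$ fails; there the fixed locus is either the node alone or the whole curve, each of Euler characteristic $1=k$, so the conclusion survives but merits a separate sentence.
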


\begin{proof}
Since the settings are almost identical, we can follow the proof of Lemma \ref{lem: mI0}, employing the same notation.
 Denote by $\tau$ the generator of $\Gal(\Psi\circ\Phi)\cong \ZZ/mr\ZZ$ ($\tau$ induces $\s$). 

Note that the central fibre $\tilde F_0$ of $\tilde f$ is a reduced fibre of type $\I_{mk}$. 

Since $F_0$ is of type $m\I_k$, the group $\Gal(\Phi)\cong \ZZ/m\ZZ$ acts on $\tilde F_0$ 
 mapping to $(\CC^*)^{mk} \rtimes D_{mk}$ in such a way that its projection in $  D_{mk}$
is $ k \ZZ / mk \ZZ  <  \ZZ/ mk \ZZ < D_{mk}$.

The group the group $\Gal(\Psi\circ\Phi)\cong \ZZ/mr\ZZ$, being a cyclic group,
maps to $ \ZZ/ mk\ZZ < D_{mk}$ (else, since $\ZZ/m\ZZ <   \ZZ/mr \ZZ$, then $\ZZ/m\ZZ $ and a reflection would not generate a cyclic group, contradicting that every subgroup of a cyclic group is  cyclic).

Hence we can write $\tau = \rho \tau'$, where $\tau'$ is a cyclical permutation; and since $\tau$ lies over the transformation $ u \mapsto \e u$, where $\e$ is a primitive $mr$-th root of unity, it follows
that $(\rho_{i+1} \rho_i^{-1} )=\e $. Since we have a cycle of curves, the previous equation implies $\e^{mk} =1$,
this means that $r | k$. 

 $\s$ acts on the set of components of the singular fibre $F_0$ via a rotation. If this rotation is nontrivial,
 then there are no fixed points on $F_0$.
 
 If instead $\s$ preserves all the components, we know that it lies in $(\CC^*)^k$,
 hence it fixes all the nodes. 
 
 If $\s$ fixes another point, then it would act as the identity  on some component of $(F_0)_{\red}$,
 and there the Euler number of the fixed point set equals $e (\PP^1)=2$.
 
It suffices now to show that $\s$ does not act as the identity on  two neighbouring components.

If in fact $\s$  acts on the $i$th  component via $\be_i$, then, since $\s (z) = \tau(z) = \e^m z$,
in view of the fact that  in local coordinates  at a node of $(F_0)_{\red}$ it acts via multiplication by 
$(\be_{i+1} , \be_i^{-1} )$ we reach a contradiction by assuming that $\be_{i+1}  = \be_i = 1$.

First because  we should  have $(\be_{i+1}  \be_i^{-1} )^m = \e^m$, absurd;
second because $\s$,  acting as the identity on the tangent space a fixed point, must be the identity.
\end{proof}

\begin{remark}
\label{rem:Cai}
Contrary to the statement in \cite[Lemma~1.5 (iii)]{Cai09},
the fixed point set may contain not only points, but also fibre components, 
as will be inherent in Construction \ref{constr}.
In fact, we shall exploit this extensively when exhibiting cohomologically trivial automorphisms in Sections \ref{s:2}, \ref{s:3},
cf.\ especially the argument in Subsection \ref{ss:pf-lem}.
\end{remark}

\section{Jacobian elliptic fibrations and their Mordell--Weil groups}\label{sec: Jacobian MW}

Let $f\colon S\rightarrow B$ be a relatively minimal elliptic surface 
with $\chi(S)>0$
and let $J(S)$ be its Jacobian. 

As mentioned in the introduction, 
 the function field extension $\CC(S) \supset \CC(B)$
provides  a curve $\sC$ of genus $1$ (the fibre of $f$ over the generic point of $B$)
defined over the non algebraically closed field $ \sK : = \CC(B)$.
The subgroup of $\Aut(S)$ of automorphisms preserving the fibration and inducing the identity on $B$ 
is isomorphic to the  subgroup $\Aut_{\sK}(\sC)$
of the automorphisms of $\sC$ which act as the identity
on the base field $ \sK$.

 These embed in $\Aut_{ \bar{\sK}}(\sC \otimes \bar{\sK})$, where $\sE : = \sC \otimes \bar{\sK}$ 
 is an elliptic  curve  over an algebraically closed field of characteristic zero 
 hence 
 \begin{equation*}
  \Aut_{\bar{\sK}}( \sE) = \sE \rtimes \mu_r, \quad r \in \{2,4,6\},
  \end{equation*}
 where  $\mu_r$ is the group of $r$th roots of unity in $\CC$, and $\sE$ acts by translations via the isomorphism 
 $ \sE \cong \Pic^1(\sE) \cong \Pic^0(\sE)$ provided by the choice of a point $0 \in \sE$ ($\Pic^d(\sE)$ being  as usual  the
 set of linear equivalence classes of divisors of degree $d$).

We denote as usual $\Jac (\sC) : = \Pic^0(\sC)$, and $\sC$ is a principal homogeneous space over $\Jac (\sC)$
 (\cite{steklov}, Chapter 2 of \cite{dolg-cime}), in particular, the Mordell--Weil group, the group of $\sK$-rational points of $\Jac (\sC)$
 acts on $\sC$. 
 
 In fact, the automorphisms in $\Aut_{\sK}(\sC)$ are the elements of $ \Aut_{\bar{\sK}}( \sE)$ which are left fixed by
 the action of the Galois group of the extension $\sK \subset \bar{\sK}$, hence the subgroup which has trivial image to
 $\mu_r$ consists of the Galois -invariant elements in $ \sE $, and these are the $\sK$ rational points of $ \Jac (\sC) $.

 The geometrical counterpart of $ \Jac (\sC) $  is the Jacobian surface $ J(S)$.
 
 By specializing to the fibre $F_b$ over a general point, we see that the condition of  having trivial image to $\mu_r$ means 
 that the restricted automorphism consists of a translation on $F_b$.
 
 Hence we have:

\begin{lem}\label{lem: im(t) vs trans}

Let $f\colon S\rightarrow B$ be a relatively minimal elliptic surface with $\chi(S)>0$. Then 
 there is an injective homomorphism 
\begin{equation}\label{eq: tMW}
t = t_S\colon \MW(J(f))\rightarrow \Aut_B(S),
\end{equation}
such that 
\[
\im(t) = \{\sigma\in \Aut_B(S) \mid \text{$\sigma|_F$ is a translation for a general fibre $F$}\}.
\]
\end{lem}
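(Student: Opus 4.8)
The plan is to build $t$ from the dictionary recalled just above, namely the identification $\Aut_B(S)\cong\Aut_{\sK}(\sC)$ between fibre-preserving automorphisms inducing the identity on $B$ and automorphisms of the generic fibre $\sC$ over $\sK=\CC(B)$. Since $\sC$ is a principal homogeneous space under $\Jac(\sC)=\Pic^0(\sC)$, every $\sK$-rational point $P\in\Jac(\sC)(\sK)$ determines a translation of the torsor $\sC$, which is an automorphism defined over $\sK$; as $\MW(J(f))$ is by definition the group $\Jac(\sC)(\sK)$, sending $P$ to translation by $P$ yields a map $t\colon\MW(J(f))\to\Aut_{\sK}(\sC)\cong\Aut_B(S)$. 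It is a homomorphism because composing translations of a torsor adds the displacements, and it is injective because the torsor action is simply transitive, hence free: translation by $P$ is the identity only when $P=0$.

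For the image, the key is to project onto the linear part. Embedding $\Aut_{\sK}(\sC)\hookrightarrow\Aut_{\bar{\sK}}(\sE)=\sE\rtimes\mu_r$ and composing with the quotient onto $\mu_r$ gives a homomorphism whose kernel consists exactly of those $\sK$-automorphisms that become translations of $\sE$ after base change to $\bar{\sK}$. As recalled before the lemma, such a translation is Galois-invariant, i.e.\ descends to $\sK$, precisely when its displacement is a $\sK$-rational point of $\Jac(\sC)$; thus this kernel is exactly $\im(t)$. To match the stated description, I would then specialize to a general point $b\in B$: the fibre $F=F_b$ is a smooth genus-one curve, a torsor under the corresponding fibre of $J(S)$, and $\sigma|_F$ is the specialization of the image of $\sigma$ in $\sE\rtimes\mu_r$. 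Its $\mu_r$-component is trivial if and only if $\sigma|_F$ is a translation, which identifies the kernel above with $\{\sigma\in\Aut_B(S)\mid\sigma|_F\text{ is a translation}\}$.

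The step I expect to be the main obstacle is the descent bookkeeping in the image argument: one must verify cleanly that Galois-invariant translations of $\sE$ correspond exactly to $\sK$-rational points of $\Jac(\sC)$, and that ``translation on a general fibre'' is genuinely detected by the vanishing of the $\mu_r$-component rather than by some fibre-dependent accident. Both facts are already essentially contained in the discussion preceding the lemma, so the remaining work is to phrase the torsor-and-Galois interplay precisely, taking care that the choice of zero section on $J(S)$ (which fixes the group law on the fibres) is what turns ``linear part trivial'' into the intrinsic notion of a translation.
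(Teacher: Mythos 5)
Your proposal is correct and follows essentially the same route as the paper, which proves this lemma precisely via the preceding discussion: identifying $\Aut_B(S)$ with $\Aut_{\sK}(\sC)$, letting the $\sK$-rational points of $\Jac(\sC)$ (i.e.\ $\MW(J(f))$) act by translation on the torsor $\sC$, and characterizing the image as the kernel of the map to $\mu_r$, detected by specialization to a general fibre. The injectivity via simple transitivity and the Galois-descent bookkeeping you flag are exactly the points the paper treats in the paragraphs immediately before the lemma statement.
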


\begin{lem}[{\cite[Theorem~3.3]{DM22}}]\label{lem: Jacobian}
Let $f\colon S\rightarrow B$ be a relatively minimal elliptic surface with $\chi(S)>0$. Then there is a homomorphism 
\begin{equation}\label{eq: Jacobian}
\Phi\colon \Aut_f(S)\rightarrow \Aut_{J(f)}(J(S))
\end{equation}
such that $\Ker(\Phi) = t(\MW(J(f)))$ and, for any $\sigma\in \Aut_f(S)$, the following holds:
\begin{enumerate}[leftmargin=*]
\item Both $\sigma$ and $\Phi(\sigma)$ induce the same automorphism of $B$.
\item $\Phi(\sigma)$ preserves the zero section of $J(f)\colon J(S)\rightarrow B$.
\item If $\sigma\in \Aut_\QQ(S)$, then $\Phi(\sigma)\in\Aut_\QQ(J(S))$.
\end{enumerate}
\end{lem}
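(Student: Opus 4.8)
The plan is to build $\Phi$ on the level of the generic fibre and then transport it back to the relatively minimal Jacobian surface. Recall that an element $\sigma\in\Aut_f(S)$ induces $\sigma_B\in\Aut(B)$, hence a $\sigma_B^*$-semilinear automorphism of the generic fibre $\sC$, i.e.\ an isomorphism $\sC\to\sC$ lying over the field automorphism $\sigma_B^*\colon\sK\to\sK$. Applying the $\Pic^0$-functor, which is defined over $\sK$ and commutes with base-field automorphisms, yields a semilinear automorphism of $\sE=\Jac(\sC)=\Pic^0(\sC)$ over the same $\sigma_B^*$. This is an automorphism of the generic fibre of $J(f)$ compatible with the fibration; since $J(S)$ is relatively minimal and jacobian, it extends uniquely to a biregular automorphism $\Phi(\sigma)\in\Aut_{J(f)}(J(S))$ (a fibration-preserving birational self-map of a relatively minimal elliptic surface is biregular). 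Functoriality of $\Pic^0$ makes $\sigma\mapsto\Phi(\sigma)$ a homomorphism.

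Properties (1) and (2) would then be immediate. For (1), the construction keeps the action on $\sK=\CC(B)$ unchanged, so $\Phi(\sigma)$ induces the same $\sigma_B$ on $B$. For (2), the induced map on $\Pic^0(\sC)$ is a homomorphism of group schemes, hence fixes the identity element $0\in\sE$; as $0$ corresponds to the zero section of $J(f)$, this section is preserved.

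For the kernel I would argue as follows. Since $\Phi(\sigma)=\id$ forces $\sigma_B=\id$ by (1), we get $\Ker(\Phi)\subset\Aut_B(S)$, which is identified with $\Aut_\sK(\sC)\hookrightarrow\Aut_{\bar{\sK}}(\sE)=\sE\rtimes\mu_r$. Under this embedding the induced action on $\Pic^0$ records exactly the image in the factor $\mu_r$ of linear automorphisms fixing $0$, because translations act trivially on $\Pic^0$ of an abelian variety whereas a nontrivial element of $\mu_r$ does not. Hence $\sigma\in\Ker(\Phi)$ precisely when its $\mu_r$-component is trivial, i.e.\ when $\sigma$ acts as a translation on the general fibre; by Lemma \ref{lem: im(t) vs trans} these are exactly the elements of $\im(t)=t(\MW(J(f)))$, giving $\Ker(\Phi)=t(\MW(J(f)))$.

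The subtle part is (3), and I expect the main work to lie here. Suppose $\sigma\in\Aut_\QQ(S)$. First, $\sigma_B$ acts trivially on $H^*(B,\QQ)$: the pullback $f^*$ is injective and $\sigma^*f^*=f^*\sigma_B^*$ with $\sigma^*=\id$, and by (1) the same $\sigma_B$ is induced by $\Phi(\sigma)$. The key geometric input is that $S$ and $J(S)$ share the local system computing the interesting cohomology: there is a canonical isomorphism $R^1f_*\QQ\cong R^1J(f)_*\QQ$ on $B$ (the reduced fibre types, and thus the monodromy, coincide), under which the sheaf-level action of $\sigma$ matches that of $\Phi(\sigma)$, since on each fibre $H^1(F_t)\cong H^1(\Jac(F_t))$ compatibly with the respective automorphisms. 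The Leray spectral sequences of $f$ and $J(f)$ degenerate and are equivariant, so numerical triviality of $\sigma$ forces a trivial action on each graded piece $H^2(B,\QQ)$, $H^1(B,R^1f_*\QQ)$, $H^0(B,R^2f_*\QQ)$; transporting these identities via the matching of the base action and of the $R^1$ local system shows $\Phi(\sigma)^*=\id$ on the corresponding pieces of $H^2(J(S),\QQ)$, and likewise on $H^1$ and $H^3$. The genuine obstacle is pinning down the canonical identification $R^1f_*\QQ\cong R^1J(f)_*\QQ$ together with its compatibility with the two automorphism actions, since this is exactly where the passage from $S$ to its Jacobian must be made cohomologically precise.
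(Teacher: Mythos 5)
Your construction of $\Phi$ is exactly the one the paper has in mind: the statement is quoted from \cite{DM22}, and the only proof content the paper itself supplies is the remark that $\Phi$ is the natural action of $\Aut(\sC)$ on $\Jac(\sC)=\Pic^0(\sC)$ of the generic fibre, which is precisely your argument; your verifications of (1), (2) and of $\Ker(\Phi)=t(\MW(J(f)))$ via the splitting $\Aut_{\bar\sK}(\sE)=\sE\rtimes\mu_r$ (translations act trivially on $\Pic^0$, a nontrivial $\mu_r$-component does not) together with Lemma \ref{lem: im(t) vs trans} are correct and complete, up to the cosmetic point that $\Pic^0$ is contravariant, so one should take $\Phi(\sigma)=(\sigma^{-1})^*$ to get a homomorphism rather than an anti-homomorphism. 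The one place you stop short is (3), where the equivariant identification $R^1f_*\QQ\cong R^1J(f)_*\QQ$ is only sketched; this is also exactly the part the paper does not prove, deferring entirely to \cite[Theorem~3.3]{DM22}, so your proposal matches the paper's approach and level of detail while being more explicit about where the remaining work lies.
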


 The automorphism $\Phi$ in the previous Lemma is just   the natural action of $\Aut(\sC)$ on $Jac(\sC) = Pic^0(\sC)$.

\begin{lem}\label{lem: add fib}
Let $f\colon S\rightarrow B$ be a relatively minimal elliptic surface with $\chi(S)>0$. Suppose that $f$ has a fibre of additive type. Then a nontrivial numerically trivial automorphism of $S$ does not induce fibrewise translations, that is,
\[
\Aut_{\QQ}(S) \cap t(\MW(J(f))_\tor) = \{\id_S\}
\]
\end{lem}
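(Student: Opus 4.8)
The plan is to take $\sigma\in\Aut_{\QQ}(S)\cap t(\MW(J(f))_\tor)$, to write $\sigma=t(P)$ for a torsion section $P\in\MW(J(f))_\tor$ (a unique such presentation exists since $t$ is injective by Lemma~\ref{lem: im(t) vs trans}), and to show $P=O$, equivalently $\sigma=\id_S$. The whole argument compares the permutation of fibre components induced by $\sigma$ at the prescribed additive fibre against the rigidity forced by numerical triviality.

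First I would extract the topological constraint. Since $\sigma$ is numerically trivial it preserves every fibre, and, as each irreducible component of a reducible fibre is a curve of negative self-intersection, the invariance properties recalled above (fixing of negative curves) force $\sigma$ to map each such component to itself. In other words, the permutation induced by $\sigma$ on the set of components of any fibre is trivial.

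Next I would analyse $\sigma=t(P)$ at the additive fibre $F$. Since multiple fibres are only of type ${}_m\I_k$, the additive fibre is not multiple, so it has the same Kodaira type, and hence the same component group $A_F$, as the corresponding fibre of $J(S)$. If $F$ has type $\II$, then $A_F=A_0$ is trivial, and the specialization \eqref{eq:FF} applied to the jacobian surface $J(S)$, namely $\MW(J(f))_\tor\hookrightarrow A_F$, already forces $P=O$. Otherwise $F$ is reducible; here the key point is that $t(P)$ restricts on the smooth locus $F^\#$ — a torsor under the smooth locus $\mathbb{G}_a\times A_F$ of the corresponding fibre of $J(S)$, see \eqref{eq:FFF} — to translation by the image $\bar P$ of $P$, and therefore permutes the components of $F$ by the shift $+\bar P$ in $A_F$. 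Combining this with the previous paragraph, triviality of the component permutation forces $\bar P=0$, i.e.\ $P$ meets the same component as the zero section $O$; by Fact~\ref{fact} (distinct torsion sections meet distinct components at an additive fibre), equivalently by injectivity of \eqref{eq:FF}, this gives $P=O$ and hence $\sigma=\id_S$.

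The step I expect to be the main obstacle is the bookkeeping in the reducible additive case: justifying that the translation $t(P)$ of $S$ — where $S$ itself need carry no section — shifts the components of $F$ by exactly the class $\bar P\in A_F$ recording how $P$ meets the corresponding fibre of the Jacobian. This rests on $S$ being fibrewise a torsor under $J(S)$, with the $\MW$-action of Lemma~\ref{lem: im(t) vs trans} being the torsor action; I would make it precise by working on $F^\#$ as a torsor under the smooth locus of the corresponding fibre of $J(S)$, where translation by $P$ manifestly acts on the component group by $+\bar P$.
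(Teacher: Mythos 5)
Your proposal is correct and follows essentially the same route as the paper's proof: both rest on the injection $\MW(J(f))_\tor\hookrightarrow A_F$ at the additive fibre together with Fact~\ref{fact:MW-add}, concluding that $t(P)$ either is trivial or permutes the simple components of that fibre nontrivially, the latter being incompatible with numerical triviality. Your extra care about the torsor action on $F^\#$ only makes explicit a step the paper takes for granted; no new idea is involved.
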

\begin{proof}
Let $f^*b$ be a fibre of additive type. 
Since $\MW(J(S))_\tor$ injects into the group of reduced components $A_{f^*b}$ by \eqref{eq:FF},
it follows that an automorphism $t(P)\in t(\MW(J(S))_\tor)$ is either trivial 
(e.g. if there is a fibre of type $\II$ or $\II^*$)
or it permutes the simple components of $f^*b$ nontrivially,
cf.\ Fact \ref{fact:MW-add}. In the latter case,
 $t(P)$ is obviously not numerically trivial. Therefore, we obtain the equality of the lemma. 
\end{proof}

The next proposition establishes an important step by relating the image of the homomorphism $t$ from \eqref{eq: tMW}
to the group $\Aut_{B, \sO}(S)$ of automorphisms of $S$
which act trivially on the base $B$ and on $H^*(S,\sO_S)$:

\begin{prop}\label{lem: AutQ vs MW}
Let $f\colon S\rightarrow B$ be a relatively minimal elliptic surface with $\chi(S)>0$. If $p_g(S)>0$, then we have
\[
\Aut_{B, \sO}(S) =\im(t), \quad \Aut_{B, \QQ}(S) \subset t(\MW(J(f))_\tor).
\]
\end{prop}

\begin{proof}
By \cite[Lemma~2.9]{DL23}, $t(P)$ acts trivially on $H^0(S, K_S) = H^2(S, \sO_S)^{\vee}$, and hence also trivially on the transcendental part of $H^2(S, \QQ)$.\footnote{ One may also argue like this: The quotient map $S\rightarrow S/t_P|_S$ is an isogeny of elliptic surfaces, so $p_g(S)=p_g(S/t_P|_S)$ and hence $t_P|_S$ acts trivially on $H^0(S, K_S)$.}

 Since $\chi(S)>0$, the pullback $f^*\colon H^1(B, \sO_B)\rightarrow H^1(S, \sO_S)$ is an isomorphism. 
 Since $t(P)\in \Aut_B(S)$ induces the trivial action on $B$, one sees that $t(P)$ also acts trivially on $H^1(S, \sO_S)$. It follows that $t(P)\in \Aut_{B, \sO}(S)$ and hence  $\im(t)\subset \Aut_{B, \sO}(S)$.

For the other inclusion, take any $\sigma\in  \Aut_{B, \sO}(S)$. 
Assuming that $\sigma$ is not a translation on a general fibre $F$, 
it necessarily fixes some points on $F$ and is thus of finite order. 
Hence we can argue with the quotient surface $S/\langle\sigma\rangle$, 
which is a $\PP^1$-fibration over $B$ whence $p_g(S/\langle\sigma\rangle)$ vanishes.
On the other  hand, by the choice of $\sigma$, we have $p_g(S/\langle\sigma\rangle)= p_g(S)>0$.
This gives the required contradiction. 

Therefore, $\sigma$ induces a translation on the general fibres of $f$. By Lemma~\ref{lem: im(t) vs trans}, $\sigma$ comes from $\MW(J(S))$.

Finally, since $\Aut_{B, \QQ}(S)$ is a finite subgroup of $\Aut_{B, \sO}(S)$
 and $t$ is injective, $\Aut_{B, \QQ}(S)$ lies in the image of the torsion subgroup $\MW(J(f))_\tor$ of $\MW(J(f))$.
 \end{proof}

 Combining Lemma~\ref{lem: add fib} and Proposition \ref{lem: AutQ vs MW}, we obtain
 
 \begin{cor}\label{cor: pg>0 add fib 1}
 Let $f\colon S\rightarrow B$ be a relatively minimal elliptic surface with $\chi(S)>0$. 
 If $p_g(S)>0$ and if $f$ admits a fibre of additive type, then $\Aut_{B,\QQ}(S)$ is trivial.
 \end{cor}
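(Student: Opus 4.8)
The plan is to derive this statement as a direct formal consequence of the two preceding lemmas, sandwiching $\Aut_{B,\QQ}(S)$ between the hypotheses of each. The only genuine observation needed is to recognize that the additive-fibre condition and the $p_g(S)>0$ condition cut out complementary pieces of the relevant group, so that their common constraint leaves only the identity.

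First I would unwind the definition: by construction $\Aut_{B,\QQ}(S)=\Aut_B(S)\cap\Aut_\QQ(S)$, so in particular $\Aut_{B,\QQ}(S)\subseteq\Aut_\QQ(S)$. Next, since $p_g(S)>0$ by hypothesis, Lemma~\ref{lem: AutQ vs MW} applies and yields the inclusion $\Aut_{B,\QQ}(S)\subseteq t(\MW(J(f))_\tor)$. Combining these two inclusions gives
\[
\Aut_{B,\QQ}(S)\subseteq \Aut_\QQ(S)\cap t(\MW(J(f))_\tor).
\]
Finally, because $f$ admits a fibre of additive type, Lemma~\ref{lem: add fib} identifies the right-hand side as $\{\id_S\}$, whence $\Aut_{B,\QQ}(S)=\{\id_S\}$ is trivial, as claimed.

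There is essentially no obstacle here beyond correctly assembling the two lemmas; all the work has already been done. If I wanted to make the argument self-contained rather than a pure citation, the conceptual core worth emphasizing is the following tension: the assumption $p_g(S)>0$ (via Lemma~\ref{lem: AutQ vs MW}, whose proof uses that $\sigma$ cannot fix points on a general fibre without lowering $p_g$) forces every numerically trivial, fibre-preserving automorphism to act as a fibrewise translation coming from $\MW(J(f))_\tor$; whereas Fact~\ref{fact:MW-add} and \eqref{eq:FF} show that a nontrivial torsion translation must permute the simple components of the additive fibre nontrivially, hence cannot be numerically trivial. The corollary is simply the statement that these two requirements are incompatible unless the automorphism is the identity.
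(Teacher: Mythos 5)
Your argument is exactly the paper's: the corollary is stated there as an immediate consequence of combining Lemma~\ref{lem: AutQ vs MW} (which, under $p_g(S)>0$, places $\Aut_{B,\QQ}(S)$ inside $t(\MW(J(f))_\tor)$) with Lemma~\ref{lem: add fib} (which kills the intersection of that image with $\Aut_\QQ(S)$ in the presence of an additive fibre). The assembly is correct and complete.
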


 We conclude this section by recording the following consequences for jacobian fibrations.
 
\begin{lem}\label{lem: MW finite}
Let $h\colon X\rightarrow B$ be a relatively minimal elliptic surface admitting  a section and with $\chi (X)>0$.
If $\Aut_{B,\QQ}(X)$ is nontrivial, then 

\begin{enumerate}
\item
$\MW(X)$ is a finite group, that is, $\MW(X)=\MW(X)_\tor$;
\item
$p_g(X)=0$, i.e.\ $X$ is rational.
\end{enumerate}

\end{lem}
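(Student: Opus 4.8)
The plan is to exploit that $X$ is its own Jacobian, so that $J(X)=X$ and the homomorphism $\Phi$ of Lemma~\ref{lem: Jacobian} is available, together with the fact that translations behave badly with respect to numerical equivalence. Throughout, write $O$ for the zero section, so that $O^2=-\chi(X)<0$, and let $\sigma\in\Aut_{B,\QQ}(X)$ be nontrivial. The first, and shared, step is to observe that no nontrivial fibrewise translation of $X$ is numerically trivial: if $t(P)$ with $P\neq O$ were, then it would send the irreducible section $O$ to the distinct irreducible section $P$ with $[P]=[O]$ in $\Num(X)$, forcing $-\chi(X)=P^2=P\cdot O\geq 0$, a contradiction. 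Hence $\Aut_{B,\QQ}(X)\cap t(\MW(X))=\{\id_X\}$, so by Lemma~\ref{lem: im(t) vs trans} our $\sigma$ is not a fibrewise translation; equivalently, its image in $\mu_r$ under \eqref{autell} is a root of unity $\zeta\neq 1$. In particular $\sigma$ restricts on a general fibre to a non-translation automorphism $z\mapsto\zeta z+c$, which has a fixed point, so $\sigma$ fixes points on the general fibre and is of finite order.

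For part (2) I would then run the quotient argument already used in the proof of Lemma~\ref{lem: AutQ vs MW}. Since $\sigma$ is a non-translation of finite order fixing points on the general fibre $F$, the quotient $X/\langle\sigma\rangle$ fibres over $B$ with general fibre $F/\langle\sigma|_F\rangle\cong\PP^1$, hence is birationally ruled over $B$ and has $p_g=0$. On the other hand, numerical triviality of $\sigma$ forces the trivial action on $H^{2,0}(X)=H^0(X,K_X)\subset H^2(X,\CC)$, and as the relevant quotient singularities are cyclic and therefore rational, one gets $p_g(X)=\dim H^0(X,K_X)^{\langle\sigma\rangle}=p_g(X/\langle\sigma\rangle)=0$. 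Finally $p_g(X)=0$ together with $\chi(X)>0$ gives $q(X)=0$ and $\chi(X)=1$, so the jacobian surface $X=J(X)$ is rational.

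For part (1) I would pass to the ``linear part'' $\Phi(\sigma)$ of Lemma~\ref{lem: Jacobian}, an automorphism of $X=J(X)$ preserving the zero section. By Lemma~\ref{lem: Jacobian}(3) it is again numerically trivial, it induces the identity on $B$ by (1), and it fixes $O$ by (2); moreover $\Phi(\sigma)\neq\id_X$, for otherwise $\sigma\in\ker\Phi=t(\MW(X))$, contradicting the first step. Being an automorphism that fixes $O$ and preserves the fibration over the identity of $B$, $\Phi(\sigma)$ acts on $\MW(X)$ as the fibrewise multiplication by $\zeta$, and it preserves the trivial lattice $\Triv(X)$ of \eqref{eq:Triv} (it fixes $O$ and $F$ and permutes fibre components), hence descends to $\Num(X)/\Triv(X)$. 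As $\Phi(\sigma)$ is numerically trivial, this descended action is the identity; transporting it through the Shioda isomorphism $\MW(X)\otimes\QQ\cong(\Num(X)/\Triv(X))\otimes\QQ$ underlying \eqref{eq:ST} shows that multiplication by $\zeta$ is the identity on $\MW(X)\otimes\QQ$. Since $\zeta\neq1$ is a root of unity, $\zeta-1$ is invertible in $\End(F)\otimes\QQ$ and hence injective on the $\QQ$-vector space $\MW(X)\otimes\QQ$, so $\MW(X)\otimes\QQ=0$; as $\MW(X)$ is finitely generated (because $\chi(X)>0$), it is finite.

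The main obstacle I anticipate is the bookkeeping in this last step: making precise that $\Phi(\sigma)$ acts on $\MW(X)$ exactly as multiplication by $\zeta$ and that this is equivariant with the action on $\Num(X)/\Triv(X)$ under Shioda's isomorphism, and then treating the roots of unity of order $4$ and $6$ uniformly through the complex-multiplication structure of $\End(F)\otimes\QQ$ rather than case by case. A secondary point to handle carefully is the identity $p_g(X)=p_g(X/\langle\sigma\rangle)$, which rests on the rationality of the cyclic quotient singularities of $X/\langle\sigma\rangle$; this is standard and parallels the reasoning already invoked in the proof of Lemma~\ref{lem: AutQ vs MW}.
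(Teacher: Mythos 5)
Your proof is correct, but it takes a genuinely different route from the paper's on both items. For (1), the paper argues in two lines: since $P^2=-\chi(X)<0$, distinct sections are numerically inequivalent, so a nontrivial $\sigma\in\Aut_{B,\QQ}(X)$ fixes every section setwise, hence pointwise (being the identity on $B$); as the fixed locus of a nontrivial automorphism contains only finitely many curves, $\MW(X)$ is finite. You instead pass to the linear part $\Phi(\sigma)$ and use equivariance of the Shioda--Tate isomorphism $\MW(X)\otimes\QQ\cong(\Num(X)/\Triv(X))\otimes\QQ$ to show that multiplication by $\zeta\neq 1$ is the identity on $\MW(X)\otimes\QQ$, forcing that space to vanish; this is correct (and the CM bookkeeping you worry about is harmless, since $(\zeta-1)$ is a nonzero element of the integral domain $\ZZ[\zeta]$ acting on a $\QQ(\zeta)$-vector space), but it is considerably heavier than needed. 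For (2), the paper combines Lemma~\ref{lem: AutQ vs MW} with part (1) and the intersection pattern \eqref{eq:pattern}: if $p_g>0$ then $\Aut_{B,\QQ}(X)\subset t(\MW(X)_\tor)$, yet any nonzero torsion section meets some fibre in a non-identity component, so its translation permutes fibre components and cannot be numerically trivial. You instead rerun the quotient argument from the proof of Lemma~\ref{lem: AutQ vs MW} directly: $\sigma$ is not a fibrewise translation (by your first step), so $X/\langle\sigma\rangle$ is ruled over $B$, and triviality of the action on $H^{2,0}$ plus rationality of the cyclic quotient singularities gives $p_g(X)=p_g(X/\langle\sigma\rangle)=0$. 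Both of your arguments are sound; what the paper's versions buy is brevity and independence of the Jacobian/Shioda machinery, while yours makes the role of the ``linear part'' $\zeta$ and of the homomorphism $\Phi$ more transparent.
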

\begin{proof}
(1) Note that the numerical classes of two distinct sections of $h$ are different, 
since $P^2=-\chi(X)<0$ for any section $P$.

It follows that $\Aut_{B, \QQ}(X)$ preserves each section of $h$. 
By the trivial action on the base, this actually implies that each section is fixed pointwise by $\Aut_{B, \QQ}(X)$. 
Since $\Aut_{B, \QQ}(X)$ is
a nontrivial  group, 
its fixed locus is a proper closed subset of $X$.
It contains all sections which thus form a finite set.
Taking into account the group structure which makes this set into the Mordell--Weil group $\MW(X)$, claim (1) follows.

(2)
If $p_g(X)>0$, then $\Aut_\QQ(X)\subseteq t(\MW(X)_\tor) = t(\MW(X))$ by Proposition \ref{lem: AutQ vs MW} and (1).
But then any torsion section $P$ meets some fibre in a non-identity component; 
this is a consequence of \eqref{eq:pattern}.
It follows that $t(P)$ acts as a non-trivial permutation on the (simple) fibre components,
hence $t(P)\not\in\Aut_\QQ(X)$.
\end{proof}

\section{Bounding the torsion part of the Mordell--Weil groups}
\label{s:MW}

In view of Proposition~\ref{lem: AutQ vs MW} and Lemma \ref{lem: MW finite}, it is crucial to bound $\MW(J(f))_\tor$, if one wants to bound $\Aut_{B, \QQ}(S)$ and $\Aut_{\QQ}(S)$. In this respect, the elliptic modular surfaces, introduced in \cite{Shi72}, play a key role, since $\MW(J(f))_\tor$ is essentially pulled back from the Mordell--Weil group of an elliptic modular surface. 

Note that any abelian group generated by two elements 
can be realized as a subgroup of the Mordell--Weil group of an elliptic modular surface by \cite[Example~5.4]{Shi72}. 
The aim of this subsection is to give a linear bound of $|\MW(X)_\tor|$ for a jacobian elliptic surface $h\colon X\rightarrow B$ in terms of $g(B)$; see Proposition~\ref{prop: bd MW}.

We recall, for each positive integer $N$,  the following congruence modular groups (\cite[Section~4.2]{Miy06}):
\begin{equation}
\begin{split}
&\Gamma(N) = \left\{\begin{pmatrix} a & b \\ c & d  \end{pmatrix} \in \SL_2(\ZZ)\, \Bigg|\, b\equiv c\equiv 0, a\equiv d\equiv 1 \mod N \right\}.\\
&\Gamma_1(N) = \left\{\begin{pmatrix} a & b \\ c & d  \end{pmatrix} \in \SL_2(\ZZ)\, \Bigg|\, c\equiv 0, a\equiv d\equiv 1 \mod N \right\}.
\end{split}
\end{equation}
Let $B(N)$ and $B_1(N)$ be the modular curves associated to $\Gamma(N)$ and $\Gamma_1(N)$ respectively, and let $g(N)$ and $g_1(N)$ be their respective genera. For $N\geq 5$, we have,  by \cite[(5.3)]{Shi72} and \cite[page 314, line 2]{Mar05},
\begin{equation}\label{eq: gN g1N}
\begin{split}
& g(N) =  \frac{1}{24}s(N)N^3 - \frac{1}{4}s(N) N^2 + 1, \\ &g_1(N) = \frac{1}{24} s(N)N^2- \frac{1}{4}u(N) + 1.
\end{split}
\end{equation}
where $s, u\colon \NN\rightarrow \ZZ$ are the multiplicative functions satisfying, for a prime number $p$ and an integer $n\geq 1$,
\begin{equation}\label{eq: s and u}
\begin{split}
& s(p^n) = 1-\frac{1}{p^2},\\
&u(p^n) = p^{n-2} (p-1)((n+1)p-n+1).
\end{split}
\end{equation}
The formula \eqref{eq: gN g1N} gives $g_1(N)=0$ for $5\leq N\leq 10$, and we may continue computing $g_1(N)$ for $11\leq N\leq 25$ as follows:
   \[
     \begin{tabular}{ | l | c | c| c | c | c|c| c| c| c| c| c| c| c| c|c|}
       \hline
$N$ &11& 12 & 13 &14 & 15 & 16 & 17 & 18 & 19 & 20 & 21 & 22 & 23 & 24 & 25\\\hline
$g_1(N)$ &1& 0 & 2 & 1 & 1 & 2 & 5 & 2 & 7 & 3 & 5& 6 & 12 & 5 & 12\\ \hline
     \end{tabular}
     \]
Based on this, we go on to bound $g_1(N)$ from below in terms of $N$:
\begin{prop}\label{prop: lower bd g1N}
Let $N\geq 5$ be an integer. Then 
\begin{equation}\label{eq: lower bd g1N}
g_1(N)> \frac{N^2}{12\pi^2} -2.
\end{equation}
\end{prop}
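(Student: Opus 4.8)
The plan is to read the dominant term and the cusp correction off the genus formula $g_1(N)=\frac1{24}s(N)N^2-\frac14u(N)+1$ and to play them against each other. First I would bound the dominant term from below. Since $s(N)=\prod_{p\mid N}(1-p^{-2})$ is a product of factors in $(0,1)$ taken over the primes dividing $N$, it exceeds the full product over all primes, so $s(N)>\prod_{p}(1-p^{-2})=\zeta(2)^{-1}=6/\pi^2$, whence
\[
\tfrac1{24}s(N)N^2>\tfrac{N^2}{4\pi^2}.
\]
Thus the dominant term already exceeds three times the desired bound $N^2/(12\pi^2)$, and the entire difficulty is to show that the cusp term $\frac14u(N)$ cannot consume too much of it.

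To quantify this I would introduce the multiplicative ratio $R(N):=u(N)/(s(N)N^2)$, whose value on a prime power is $r(p^n)=\big((n+1)p-n+1\big)/\big(p^n(p+1)\big)$. If $R(N)\le\frac19$, then $u(N)\le\frac19 s(N)N^2$ and therefore
\[
g_1(N)\ge\Big(\tfrac1{24}-\tfrac1{36}\Big)s(N)N^2+1=\tfrac1{72}s(N)N^2+1>\tfrac{N^2}{12\pi^2}+1,
\]
which is far more than required. So the only remaining task is to treat the $N$ with $R(N)>\frac19$.

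To see that these form a small finite set, I would use the elementary estimate $r(p^n)\le r(p)=2/(p+1)$ for every $n\ge1$, giving $R(N)\le\prod_{p\mid N}2/(p+1)$. Already for three distinct prime divisors the right-hand side is at most $\frac23\cdot\frac24\cdot\frac26=\frac19$, so $R(N)>\frac19$ forces $\omega(N)\le2$; the same estimate bounds the primes and exponents that can occur, and a direct check pins the exceptional $N$ down to an explicit list contained in $\{5,6,\dots,24\}$. For these I would split into two cases: when $N\le15$ one has $N^2<24\pi^2$, so the target $N^2/(12\pi^2)-2$ is negative while $g_1(N)\ge0$, making the inequality trivial; for the remaining values $N\in\{16,18,20,21,24\}$ the tabulated genera are $2,2,3,5,5$, which exceed the respective right-hand sides $N^2/(12\pi^2)-2$ (all of which lie below $3$).

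The main obstacle is precisely the control of the cusp term $u(N)$: the clean dichotomy $R(N)\le\frac19$ versus $R(N)>\frac19$ is what isolates the finitely many small-$N$ exceptions, where $g_1(N)$ is tiny (even $0$ for $5\le N\le10$), and it is for exactly these that the slack $-2$ in the statement is indispensable.
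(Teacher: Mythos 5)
Your proof is correct and follows essentially the same route as the paper: the lower bound $s(N)>6/\pi^2$ combined with the threshold $u(N)/(s(N)N^2)\le\tfrac19$ is exactly the paper's decomposition, and your arithmetic (including the identity $\tfrac1{24}-\tfrac1{36}=\tfrac1{72}$ and the enumeration of the exceptional $N$) checks out. The only difference is organizational: the paper verifies $t(N)\le\tfrac19$ for all $N\ge 26$ by a case analysis on the number of prime factors and handles $N\le 25$ by the table, whereas you use the uniform bound $r(p^n)\le 2/(p+1)$ to pin the exceptional set inside $\{5,\dots,24\}$ up front and then dispose of it via the table — a slightly cleaner bookkeeping of the same finite check.
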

Later on, we will give in Corollary~\ref{cor: bound g_N} a better lower bound when  $N$ is \emph{sufficiently large}.

\begin{proof}
The inequality holds for $N\leq 25$ by looking at the above table. In the following, we may assume that $N\geq 26$. We rewrite \eqref{eq: gN g1N} as
\begin{equation}\label{eq: g1N'}
g_1(N) = \frac{1}{24}s(N)N^2\left( 1 - \frac{6u(N)}{s(N)N^2}\right) +1 
\end{equation}
By the definition of the multiplicative functions $s$ and $u$ in \eqref{eq: s and u}, we have
\[
s(N) = \prod_{p\mid N} \left(1-\frac{1}{p^2} \right),\quad u(N) = \prod_{p^n || N} \left((n+1)p^n - 2np^{n-1} + (n-1)p^{n-2}\right)
\]
where the product ranges over primes $p$ dividing $N$, and $p^n || N$ means that $p^n$ is the largest power of $p$ dividing $N$.

Since the infinite product $s(\infty):=\prod_{p \text{ prime}} (1-\frac{1}{p^2})$ converges to 
\[
\left(\sum_{n\in \NN} \frac{1}{n^2}\right)^{-1} = \frac{6}{\pi^2},
\]
we have
\begin{equation}\label{eq: s lower bd}
s(N)> s(N)\cdot \prod_{p{\nmid} N} \left(1-\frac{1}{p^2}\right) = s(\infty) = \frac{6}{\pi^2}.
\end{equation}
Plugging the lower bound \eqref{eq: s lower bd} of $s(N)$ into the expression \eqref{eq: g1N'}, we obtain
\[
g_1(N) \geq \frac{N^2}{4\pi^2}\left( 1 - \frac{6u(N)}{s(N)N^2}\right) +1.
\]
Thus, to show the inequality \eqref{eq: lower bd g1N} of the proposition, it suffices to show the following claim.
\end{proof}

\begin{claim}\label{claim: bd u/s}
For $N\geq 26$, we have
\begin{equation}\label{eq: bd u/s}
1 - \frac{6u(N)}{s(N)N^2} \geq \frac{1}{3},\quad \text{or equivalently,}\quad t(N):=\frac{u(N)}{s(N)N^2} \leq \frac{1}{9}.
\end{equation}
\end{claim}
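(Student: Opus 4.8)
The plan is to exploit multiplicativity throughout: since $s$, $u$ and $N^2$ are multiplicative, writing $N=\prod_p p^{n_p}$ the quantity $t(N)$ factors as $t(N)=\prod_{p^n\| N}t(p^n)$, so everything reduces to the single prime-power factor. The first step I would record is a simplification. Using $s(p^n)=(p-1)(p+1)/p^2$ together with the given formula for $u(p^n)$, the local factor collapses to the clean expression
\[
t(p^n)=\frac{(n+1)p-(n-1)}{(p+1)\,p^{\,n}}.
\]

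Next I would establish two monotonicity statements. First, $t(p)=\tfrac{2}{p+1}$ is strictly decreasing in $p$. Second, $t(p^n)$ is strictly decreasing in $n$: the ratio $t(p^{n+1})/t(p^n)$ equals $\tfrac{(n+2)p-n}{p\big((n+1)p-(n-1)\big)}$, and this is $<1$ precisely because the upward parabola $(n+1)p^2-(2n+1)p+n$ has discriminant $1$ and roots $1$ and $\tfrac{n}{n+1}$, hence is positive for all $p\geq 2$. Since $t(p)<1$ as well, every prime-power factor lies in $(0,1)$, so multiplying $N$ by any prime strictly decreases $t$; in other words $t$ is strictly decreasing under divisibility, i.e.\ $M\mid N$ and $M\neq N$ imply $t(N)<t(M)$. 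This is the structural fact that makes the remaining analysis finite.

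Two consequences follow from the radical bound $t(N)\leq\prod_{p\mid N}\tfrac{2}{p+1}$. If $N$ has at least four distinct prime factors, the four smallest primes already force $t(N)\leq\tfrac23\cdot\tfrac12\cdot\tfrac13\cdot\tfrac14=\tfrac1{36}$. If $N$ has exactly three distinct prime factors, then $\prod\tfrac{2}{p+1}\leq\tfrac23\cdot\tfrac12\cdot\tfrac13=\tfrac19$, with equality only for the prime set $\{2,3,5\}$; combined with the strict monotonicity in the exponents this gives $t(N)\leq\tfrac19$, with equality exactly at $N=30$. Thus the only range left is $N$ with at most two distinct prime factors, and it is here that the hypothesis $N\geq 26$ enters decisively.

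For $N$ with one or two distinct primes I would invoke divisibility-monotonicity to reduce to the finitely many divisibility-minimal integers $\geq 26$ (those whose proper divisors are all $<26$); primes $p\geq 29$ are dispatched uniformly by $t(p)=\tfrac2{p+1}\leq\tfrac1{15}$. Inspecting the two-prime radical bound $\tfrac{4}{(p+1)(q+1)}$ shows that only the prime sets $\{2,3\},\{2,5\},\{2,7\},\{3,5\},\{3,7\}$ (and the single primes $2,3,5,7$) can possibly beat $\tfrac19$, so the finite check is short: for each such set one lists the minimal admissible exponent vectors with $N\geq 26$ and evaluates $t$, e.g.\ $t(30)=\tfrac19$, $t(28)=\tfrac5{48}$, $t(36)=\tfrac5{54}$, $t(27)=\tfrac5{54}$, all $\leq\tfrac19$. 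The role of the threshold is exactly that the genuinely dangerous small values $N=24=2^3\cdot3$ (where $t=\tfrac18>\tfrac19$) as well as $N=12,18,20,21$ all fall below $26$ and are excluded. The main obstacle is precisely this sharpness: equality holds at $N=30$, so no lossy estimate—such as replacing $s(N)$ by its limiting value $6/\pi^2$—can work, and one is forced to keep the exact multiplicative factor and to verify by hand that $26$ is the correct cutoff separating the finitely many exceptional small $N$ from the stable regime.
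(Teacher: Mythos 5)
Your proposal is correct and follows essentially the same route as the paper: the closed form $t(p^n)=\frac{(n+1)p-(n-1)}{(p+1)p^n}$, monotonicity in the exponents and primes, a split by the number of distinct prime factors, and a finite check of the divisibility-minimal $N\geq 26$ (your explicit ratio/discriminant argument for monotonicity in $n$ and the identification of $N=30$ as the unique equality case are slightly cleaner than the paper's appeal to the real two-variable function). The only nitpick is that your radical-bound filter $\tfrac{2}{p+1}\leq\tfrac19$ does not actually exclude the single-prime sets $\{11\}$ and $\{13\}$, but since $N\geq 26$ forces $N$ to be a multiple of $11^2$ or $13^2$ there, with $t(11^2)=\tfrac{8}{363}$ and $t(13^2)=\tfrac{19}{1183}$ both far below $\tfrac19$, this does not affect the argument.
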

\begin{proof}[Proof of the claim.]
Note that the function $t(N)$ is multiplicative. We need to study the values $t(p^n)$ at the prime powers $p^n$. By the definition of the functions $s(N)$ and $u(N)$ in \eqref{eq: s and u}, we have for a prime $p$ and an integer $n\geq 1$:
\begin{equation}\label{eq: t(pn)}
t(p^n) = \frac{p^{n-2}\left((n+1)p^2-2np+n-1\right)}{\left(1-\frac{1}{p^2}\right)p^{2n}} = \frac{(n+1)p-(n-1)}{p^n(p+1)}
\end{equation}
Replacing $p$ and $n$  in \eqref{eq: t(pn)} with two real variables $x, y\in \RR_{\geq 2}$, and analyzing the function $t(x^y)$, one sees that $t(p^n)$ is a decreasing function of $p$ and $n$. For the small $p$ and $n$, we compute directly the value $t(p^n)$ as follows:
  \[
     \begin{tabular}{ | l | c | c| c | c | c|c| c| c| c|}
       \hline
 \diagbox{$n$}{$p$}&$2$& 3 &5 &7 & 11 & 13 & 17 & 19 & 23 \\\hline
$1$ &2/3& $1/2$ & $1/3$ & $1/4$ &$1/6$ & $1/7$ & $1/9$ & $1/10$ &$1/12$ \\ \hline
$2$ &$5/12$& $2/9$ & $7/75$ & $5/98$ & $4/363$ & 19/1183  & \diagbox{}{}  & \diagbox{}{}  & \diagbox{}{} \\ \hline
$3$ &$1/4$& $5/54$ &  \diagbox{}{}  &  \diagbox{}{}  &  \diagbox{}{}  &  \diagbox{}{}  &  \diagbox{}{}  &  \diagbox{}{}  &  \diagbox{}{}  \\ \hline
$4$ &$7/48$& \diagbox{}{} & \diagbox{}{}  &  \diagbox{}{} &  \diagbox{}{}  &  \diagbox{}{}  &  \diagbox{}{}  &  \diagbox{}{}  & \diagbox{}{}  \\ \hline
$5$ &$1/12$&  \diagbox{}{}  &\diagbox{}{}  & \diagbox{}{}  &\diagbox{}{}  & \diagbox{}{}  & \diagbox{}{}  & \diagbox{}{}  &\diagbox{}{}  \\ \hline
     \end{tabular}
     \]
We stop the computation along a column as soon as $t(p^n)\leq \frac{1}{9}$, which is then sufficient for the proof of Claim~\ref{claim: bd u/s}. 

Now suppose that $N\geq 26$, and it has a prime decomposition
\[
N = p_1^{n_1}\cdot\dots\cdot p_r^{n_r}.
\]
We proceed according to the value of $r$. If $r\geq 3$, then, by the mononicity of $t(p^n)$ for $p$ and $n$, we have
\[
t(N) \geq t(2\cdot 3\cdot 5) =t(2)\cdot t(3)\cdot t(5) = \frac{2}{3}\cdot \frac{1}{2}\cdot \frac{1}{3} = \frac{1}{9}.
\]
If $r=2$, then, by the mononicity of $t(p^n)$ for $p$ and $n$, we may assume that $p_i^{n_i}<26$ for both $i=1,2$, and the value of $t(N)$ is at most one of the following:
\begin{align*}
&t(2^4\cdot 3),\, t(2^2\cdot 3^2),\, t(2^3\cdot 5), \, t(2\cdot 5^2),\,  t(2^2\cdot 7),\, t(2^2\cdot 11),\, t(2\cdot 13), \\
&t(3^2\cdot 5),\, t(3\cdot 5^2), \,t(3^2\cdot 7),\, t(3\cdot 11),\, t(5\cdot 7)
\end{align*}
all of which can be directly checked to be less than $\frac{1}{9}$. For example, 
\[
t(2^4\cdot 3) = t(2^4)\cdot t(3) = \frac{7}{48}\cdot \frac{1}{2} = \frac{7}{96}<\frac{1}{9}.
\]
If $r=1$, then $N=p^n$ for some prime $p$, and by the mononicity of $t(p^n)$ for $p$ and $n$ again, we have
\[
t(N) \leq \min\{t(2^5),\, t(3^3), \, t(7^2),\, t(29)\} = \min\left\{\frac{1}{12},\, \frac{5}{54},\, \frac{5}{98}, \, \frac{1}{15}\right\} = \frac{5}{98} <\frac{1}{9}.
\]
This finishes the proof of the claim.
\end{proof}

As a side remark, we observe an asymptotic behavior of the ratio $g_1(N)/N^2$, which seems to be known (\cite[Theorem~7]{Mar05}),
 but not in the following form:
\begin{lem}
We have 
\begin{equation}
\varlimsup_{N\to \infty} \frac{g_1(N)}{N^2}=\frac{1}{24},\quad
\varliminf_{N\to \infty} \frac{g_1(N)}{N^2} = \frac{1}{4\pi^2}.
\end{equation}
\end{lem}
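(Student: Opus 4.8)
The plan is to reduce the asymptotics of $g_1(N)/N^2$ to those of the multiplicative quantity $s(N)$, whose limit points are easy to pin down, by showing that the remaining contribution in the genus formula is negligible. Dividing \eqref{eq: gN g1N} by $N^2$ gives
\[
\frac{g_1(N)}{N^2} = \frac{s(N)}{24} - \frac{u(N)}{4N^2} + \frac{1}{N^2},
\]
so the whole statement will follow once I prove that $w(N):=u(N)/N^2\to 0$ as $N\to\infty$. Indeed, this yields $g_1(N)/N^2 - s(N)/24\to 0$, whence the two sequences have the same $\varlimsup$ and the same $\varliminf$, and it remains only to compute those for $s(N)$.

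The main obstacle is exactly the claim $w(N)\to 0$, and I would establish it through the multiplicativity of $w$ together with two elementary estimates on its values $w(p^n)=\dfrac{(p-1)\bigl((n+1)p-(n-1)\bigr)}{p^{n+2}}$ read off from \eqref{eq: s and u}. First, $w(p^n)\leq \tfrac12$ for every prime power, with maximum attained at $(p,n)=(2,1)$; for $n=1$ this is the inequality $(p-2)^2\geq 0$, and the remaining small cases are checked by hand. Second, the crude bound $w(p^n)<\dfrac{n+1}{p^n}$ shows that if $m=p^n$ is large then $w(p^n)<\dfrac{\log_2 m+1}{m}$, which tends to $0$. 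Given $\e>0$, I would then split on the number $\omega(N)$ of distinct prime factors: if $\omega(N)$ is large, then $w(N)\leq (\tfrac12)^{\omega(N)}$ is small; if $\omega(N)$ is bounded while $N\to\infty$, some prime-power factor $p^n\,\|\,N$ must exceed $N^{1/\omega(N)}$ and hence be large, so that single factor is $<\e$ while all others are $\leq 1$. Either way $w(N)<\e$ for $N$ large.

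Once $w(N)\to 0$ is in hand, I would determine the limit points of $s(N)=\prod_{p\mid N}(1-p^{-2})$. Since $s(N)\leq 1$ with $s(p)=1-p^{-2}\to 1$ along the primes, we get $\varlimsup s(N)=1$. On the other hand \eqref{eq: s lower bd} gives $s(N)>6/\pi^2$ for all $N$, and along the primorials $N_k=p_1\cdots p_k$ one has $s(N_k)\to \prod_{p}(1-p^{-2})=6/\pi^2$, so $\varliminf s(N)=6/\pi^2$. Combining with the displayed identity and $w(N)\to 0$ yields
\[
\varlimsup_{N\to\infty}\frac{g_1(N)}{N^2}=\frac{1}{24},\qquad
\varliminf_{N\to\infty}\frac{g_1(N)}{N^2}=\frac{1}{24}\cdot\frac{6}{\pi^2}=\frac{1}{4\pi^2},
\]
as claimed. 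All steps other than $w(N)\to 0$ are routine given the material already developed, so the case analysis for that vanishing is where the real (if modest) work lies.
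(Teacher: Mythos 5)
Your proof is correct and follows the same overall route as the paper: divide the genus formula \eqref{eq: gN g1N} by $N^2$, show the $u(N)/N^2$ term is negligible, and then read off the limit points of $s(N)/24$ (limsup $\tfrac1{24}$ along primes since $s(N)<1$; liminf $\tfrac{6}{24\pi^2}=\tfrac1{4\pi^2}$ along primorials using \eqref{eq: s lower bd}). The one genuine divergence is how the negligibility of $u(N)/N^2$ is handled: the paper simply cites this as known (\cite[Lemma~30]{Mar05}), whereas you prove it from scratch via multiplicativity of $w(N)=u(N)/N^2$, the uniform bound $w(p^n)\le \tfrac12$ (your reduction of the $n=1$ case to $(p-2)^2\ge 0$ is right, and the crude bound $w(p^n)<(n+1)/p^n$ handles $p\ge 3$ or $n\ge 3$, leaving only $w(4)=5/16$ to check by hand), and the decay of $w$ at large prime powers, split according to whether the number of distinct prime factors of $N$ is large or bounded. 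That case analysis is sound. What your version buys is self-containedness — no reliance on the external lemma — at the cost of a page of elementary estimates that the paper outsources; otherwise the two arguments are the same.
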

\begin{proof}
As is known (see for example \cite[Lemma~30]{Mar05}), we have 
\[
\lim_{
N
\to \infty} \frac{u(N)}{
{N^2}
} =0,
\]
and hence by \eqref{eq: gN g1N},  $\frac{g_1(N)}{N^2}$ and $\frac{s(N)}{24}$ share the same upper and lower limits.

To compute the upper limit, we note that $s(N)<1$ in any case and, taking 
$N$
 to be primes $p$, we have
\[
\lim_{p\to \infty} \frac{s(p)}{24} = \frac{1}{24}\lim_{p\to \infty} \left(1-\frac{1}{p^2}\right) =\frac{1}{24}.
\]

For the lower limit, we note that $s(N)>s(\infty) = \frac{6}{\pi^2}$ as in \eqref{eq: s lower bd}. On the other hand, taking the sequence of prime numbers $\{p_r\}_r$ and setting $N_r:=\prod_{1\leq i\leq r} p_i$, we obtain
\[
\lim_{r\to \infty} \frac{s(N_r)}{24} =\frac{1}{24} \lim_{r\to \infty} \prod_{1\leq i\leq r} \left(1-\frac{1}{p_i^2}\right) = \frac{1}{24}\cdot\frac{6}{\pi^2} = \frac{1}{4\pi^2}.
\]
\end{proof}

\begin{cor}\label{cor: bound g_N}
For any positive real number $\epsilon$, there is a positive integer $N_\epsilon$, depending on $\epsilon$, such that, for any $N\geq N_\epsilon$, we have
\[
g_1(N) \geq \frac{N^2}{4\pi^2+\epsilon}.
\]
\end{cor}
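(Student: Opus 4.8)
The plan is to deduce this directly from the liminf computation in the preceding lemma, of which the statement is essentially a reformulation. Recall that we have just established
$$\varliminf_{N\to\infty}\frac{g_1(N)}{N^2}=\frac{1}{4\pi^2}.$$
Since $\epsilon>0$, the target value $\frac{1}{4\pi^2+\epsilon}$ is strictly smaller than the limit inferior $\frac{1}{4\pi^2}$. Setting
$$\delta:=\frac{1}{4\pi^2}-\frac{1}{4\pi^2+\epsilon}=\frac{\epsilon}{4\pi^2(4\pi^2+\epsilon)}>0,$$
the definition of $\varliminf$ furnishes an integer $N_\epsilon$ such that $\frac{g_1(N)}{N^2}>\frac{1}{4\pi^2}-\delta=\frac{1}{4\pi^2+\epsilon}$ for every $N\geq N_\epsilon$, which is exactly the assertion.

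If one prefers an argument exhibiting $N_\epsilon$ more explicitly, I would instead go back to the identity
$$\frac{g_1(N)}{N^2}=\frac{s(N)}{24}\left(1-\frac{6u(N)}{s(N)N^2}\right)+\frac{1}{N^2},$$
and combine the lower bound $s(N)>s(\infty)=\frac{6}{\pi^2}$ from \eqref{eq: s lower bd} with the vanishing $u(N)/N^2\to 0$ recalled above. For $N$ large enough the bracketed factor is positive, whence
$$\frac{g_1(N)}{N^2}>\frac{1}{4\pi^2}\left(1-\frac{6u(N)}{s(N)N^2}\right);$$
it then suffices to choose $N_\epsilon$ so that $\eta_N:=\frac{6u(N)}{s(N)N^2}$ satisfies $\eta_N\leq\frac{\epsilon}{4\pi^2+\epsilon}$ for all $N\geq N_\epsilon$, which is possible since $\eta_N\to 0$.

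The only point requiring any care, should one pursue this effective route, is the uniform control of $\eta_N$, that is, making the convergence $u(N)/N^2\to 0$ quantitative. Since, however, the corollary only asserts the \emph{existence} of $N_\epsilon$, this presents no genuine obstacle, and the first paragraph already constitutes a complete proof.
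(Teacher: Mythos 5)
Your first paragraph is exactly the intended argument: the paper states this corollary without proof as an immediate consequence of the preceding lemma's computation $\varliminf_{N\to\infty} g_1(N)/N^2 = \frac{1}{4\pi^2}$, and your appeal to the definition of $\varliminf$ with $\delta = \frac{1}{4\pi^2}-\frac{1}{4\pi^2+\epsilon}>0$ is correct and complete. The alternative effective route you sketch is unnecessary but also sound.
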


Now we may use Proposition~\ref{prop: lower bd g1N} to bound $|\MW(X)|$ for jacobian elliptic surfaces. 

\begin{prop}\label{prop: bd MW}
Let $h\colon X\rightarrow B$ be a jacobian elliptic surface with $\chi(\sO_X)>0$. Let $g=g(B)$ be the genus of the base curve. Then 
\begin{equation}\label{eq: MW bd}
|\MW(X)_\tor|< 12\pi^2(g+2).
\end{equation}
Moreover, if the fibration is isotrivial, then there is the uniform bound $|\MW(X)_\tor|\leq 4$.
\end{prop}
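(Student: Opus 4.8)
The plan is to record the structure of the torsion group once and for all and then split into the isotrivial and non-isotrivial cases; the isotrivial case will in fact subsume the global bound \eqref{eq: MW bd} there. Since $\MW(X)$ is generated by at most two elements, write $\MW(X)_\tor\cong\ZZ/N'\ZZ\times\ZZ/N\ZZ$ with $N'\mid N$, so that the exponent of the group is $N$ and $|\MW(X)_\tor|=NN'\le N^2$.

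First I would settle the isotrivial case. As $\chi(X)>0$ forces $e(X)=12\chi(X)>0$, there is at least one singular fibre; since $X$ is jacobian there are no multiple fibres, and since the smooth fibres are all isomorphic the functional invariant is a constant finite value, so every singular fibre is necessarily of additive type (a multiplicative $\I_n$ would force $j=\infty$). Fixing such an additive fibre $F$ and using \eqref{eq:FF}, the torsion injects as $\MW(X)_\tor\hookrightarrow A_F$. A glance at the table of Dynkin types shows that for an additive fibre of finite $j$ one has $|A_F|\le 4$, with equality only for type $\I_0^*$ (where $A_F\cong(\ZZ/2\ZZ)^2$); the types $\I_n^*$ with $n>0$ are excluded, as they have $j=\infty$. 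Hence $|\MW(X)_\tor|\le 4$, and a fortiori $|\MW(X)_\tor|\le 4<24\pi^2\le 12\pi^2(g+2)$, so that the global bound holds too.

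For the non-isotrivial case I would invoke the elliptic modular surfaces of \cite{Shi72}. If $N\le 4$ there is nothing to do, since $|\MW(X)_\tor|\le N^2\le 16<24\pi^2\le 12\pi^2(g+2)$. So suppose $N\ge 5$. Then $X$ carries a torsion section of order $N$, which over the open locus $B^\circ\subset B$ of smooth fibres determines a morphism $B^\circ\to B_1(N)$ to the modular curve of elliptic curves equipped with a point of order $N$; as $B$ and $B_1(N)$ are smooth projective, this extends to a morphism $\phi\colon B\to B_1(N)$. Here I would use that for $N\ge 3$ the $\Gamma_1(N)$-level structure is rigid (the automorphism $-1$ moves any point of order $\ge 3$), so no twist ambiguity intervenes and $\phi$ is well-defined. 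Because the composite of $\phi$ with the finite $j$-map $B_1(N)\to\PP^1$ is the $j$-map of $X$, which is non-constant by hypothesis, $\phi$ itself is non-constant; Riemann--Hurwitz then gives $g=g(B)\ge g_1(N)$.

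It remains to feed this into Proposition~\ref{prop: lower bd g1N}: from $g\ge g_1(N)>\tfrac{N^2}{12\pi^2}-2$ we get $N^2<12\pi^2(g+2)$, whence $|\MW(X)_\tor|\le N^2<12\pi^2(g+2)$. The crux of the whole argument is the middle step of the non-isotrivial case, namely producing the non-constant classifying morphism $\phi\colon B\to B_1(N)$ and thereby the genus inequality $g(B)\ge g_1(N)$. This is where the rigidity of the level structure for $N\ge 3$ and the non-constancy of the $j$-invariant must be combined; once $g(B)\ge g_1(N)$ is in place, the numerical estimate of Proposition~\ref{prop: lower bd g1N} closes the argument with no further difficulty.
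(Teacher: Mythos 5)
Your proof is correct and follows essentially the same route as the paper: bound $|\MW(X)_\tor|$ by $N^2$ for $N$ the exponent, map $B$ onto the modular curve $B_1(N)$ to get $g(B)\geq g_1(N)$, and invoke Proposition~\ref{prop: lower bd g1N}, while in the isotrivial case the absence of $\I_n$ fibres ($n\geq 1$) together with \eqref{eq:FF} gives the bound $4$. The only (welcome) additions are your explicit treatment of the trivial range $N\leq 4$ and your spelling out of why the additive fibre types occurring for constant finite $j$ have $|A_F|\leq 4$, which the paper delegates to a citation.
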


\begin{proof}
First suppose that $h$ is not isotrivial. Let $P\in \MW(X)_\tor$ be a torsion section of maximal order, say $N$. Then $|\MW(X)_\tor|\leq N^2$,
since any finite subgroup of any elliptic curve is 2-generated (as in Theorem \ref{thm} (i)!).
Let $h_N\colon X_1(N)\rightarrow B_1(N)$ be the modular elliptic surface 
parametrizing elliptic curves with an $N$-torsion point $P_N$ for $N\geq 4$
(which thus  features as a section of $X_1(N)$). 
Then there is a surjective morphism $\pi\colon B\rightarrow B_1(N)$ such that $h\colon X\rightarrow B$ and $P$ is obtained by pulling back $h_N$ and $P_N$ via the base change $\pi$. Since $N^2\leq 12\pi^2(g_1(N)+2)$ by Proposition~\ref{prop: lower bd g1N} and $g=g(B)\geq g(B_1(N))=g_1(N)$, we obtain the claimed inequality \eqref{eq: MW bd}. 

If $h$ is isotrivial, then there are no $\I_n$ fibres with $n\geq 1$, and the order of $\MW(X)_\tor$ is at most $4$
by \eqref{eq:FF} (cf.~\cite[5.6.1]{SS19}), which is of course smaller than $12\pi^2(g+2)$.
\end{proof}

\begin{remark}
The bound $|\MW_\tor|\leq N^2$ in the proof of Proposition \ref{prop: bd MW} is not optimal;
for instance, if $|\MW_\tor|= N^2$, then the genus $g$ of the base curve $B$ should rather be bounded from below by $g(N)$.
We omit the  details for brevity.

A computation of the genera of the modular curves for all intermediate groups between $\ZZ/N\ZZ$ and 
$(\ZZ/N\ZZ)^2$
such that the obvious condition $g_1(N)\leq 4$ holds,
e.g.\ using Magma \cite{MAGMA}, yields the following sharp upper bounds for $|\MW_\tor|$ in terms of the genus $g$:
$$
\begin{array}{c||c|c|c|c|c}
g & 0 & 1 & 2 & 3 & 4\\
\hline
|\MW_\tor|\leq & 25 & 36 & 36 & 49 & 50
\end{array}
$$
\end{remark}

\section{General upper bounds for  $|\Aut_\QQ(S)|$ and $|\Aut_\ZZ(S)|$}
\label{s:upper_bounds}



Let $f\colon S\rightarrow B$ be a minimal properly elliptic surface with $\chi(S)>0$. 

This inequality can occur in two ways:

either $q = p_g=0$ which means that   the relative Jacobian $J(S)$ is  rational,
or $p_g >0$.

Our aim in this section is to  give an
upper  bound for $|\Aut_\QQ(S)|$  in terms of the irregularity $q(S)$.  If the relative Jacobian $J(S)$ is not rational,  see Corollary~\ref{cor: AutB bd}. 

In the first case where  $J(S)$ is rational, the situation is quite different, and we treat this  case first. 

\subsection{The case where $J(S)$ is rational}

In treating the case of a rational Jacobian,
there will be 4 surfaces requiring special attention.
They are singled out by the properties that they are isotrivial and have only  two singular fibres,
so that the Mordell--Weil group is finite: they are the so called   extremal rational elliptic surfaces;
it was also shown in \cite[Table 3]{DM22} that these are the only jacobian rational elliptic surfaces $X$ over $\CC$ 
with infinite $\Aut_\QQ(X)\cong\CC^*$ (here, obviously, $\Aut_\QQ(X)= \Aut_\ZZ(X)$).
The following table lists the surfaces in the notation of  \cite{MP86} 
together with a polynomial $P \in\CC[t]$
such that a Weierstrass form is given by $y^2= x^3 + P$, and with further essential information.

\begin{table}[ht!]
$$
\begin{array}{c||c|c|c|c}
X & X_{22} & X_{33} & X_{44} & X_{11}(\lambda)  \;\; (\lambda\in\CC^\times)\\
\hline
P & t & tx & t^2 & tx^2 +\lambda t^2x \\
\text{singular fibres} & \II+\II^* & \III + \III^* & \IV + \IV^* & 2\times \I_0^*\\
\MW(X) & \{0\} & \ZZ/2\ZZ & \ZZ/3\ZZ & (\ZZ/2\ZZ)^2\\
\Aut_{B,\ZZ}(X) & \mu_6 & \mu_4 & \mu_3 & \mu_2\\
r & 6 & 4 & 3 &2
\end{array}
$$
\caption{Special isotrivial rational elliptic surfaces}
\label{table:special}
\end{table}

Here $r$ equals the size of $\Aut_{B,\ZZ}(X)\cong \mu_r$ and will be used in the sequel as a reference:
its geometric meaning is that an isotrivial elliptic surface with such a Jacobian is the quotient $(C \times E)/G$,
where $ G < T \rtimes \mu_r$, $T$ being a subgroup of $E$ (see Section 8.1).

\begin{prop}\label{prop: pg=0 bd J(S)}
Let $f\colon S\rightarrow B$ be a minimal properly elliptic surface such that $J(f)\colon J(S)\rightarrow B$ is a rational elliptic surface. Then the following holds.
\begin{enumerate}
\item 
 If $f$ does not admit any singular fibre of additive type, or, equivalently, if $J(f)$ is semistable, then
we have
\[
 |\Aut_\QQ(S)| = |\Aut_{B,\QQ}(S)| \leq 9,
\]
  $\Aut_\QQ(S)$ is abelian, 2-generated.  Moreover, the upper bound can be attained 
and, in the equality case,  $ \Aut_\QQ(S) \cong (\ZZ/3\ZZ)^2$ 
 and $J(S)$ must be the extremal rational elliptic surface $X_{3333}$ in the list of \cite{MP86}, see also  \cite[VIII.1.4]{Mir89}.

\item

If $f$ admits a singular fibre of additive type, then we have
\[
|\Aut_{B,\QQ}(S)|\leq 2, \quad |\Aut_{\QQ}(S)|_B|\leq 3,\quad |\Aut_{\QQ}(S)|\leq 4.
\]
unless $J(S)$ is one of the surfaces $X_{22}$, $X_{33}$, $X_{44}$, $X_{11}(\lambda)$
 in \cite{MP86},  \cite[VIII.1.4]{Mir89}.

\item 
If $J(S)$ is one of the isotrivial elliptic surfaces $X_{22}, X_{33}$, $X_{44}$, $X_{11}(\lambda)$, then 
 the following holds.
\begin{enumerate}
\item $\Aut_\QQ(S)|_B$ is finite cyclic, and its order divides $P_2(S)+1$, where   $P_2(S):= h^0(2 K_S)$ is  
the second plurigenus  of $S$;
\item 
$\Aut_{B, \QQ}(S)$ is a subgroup of $\Aut_{B,\QQ}(J(S))=\mu_r$ 
with $r=6,4,3,2$  from Table \ref{table:special} in the given order of the isotrivial surfaces.
Moreover, it is a proper subgroup if $r=6$.

\item
Conversely,
 for some of the surfaces $X=
 X_{22},$ 
$X_{33}$,
given $s \in \NN$, divisible by  $3$ or odd,  there is a minimal properly elliptic surface $S$ such that $J(S)=X$
and $|\Aut_\QQ(S)|_B| = P_2(S)+1 = s$, whereas for $s$ even we can achieve  $|\Aut_\QQ(S)|_B| = (P_2(S)+1)/2 = s/2$.

\end{enumerate}
%
%
%
\end{enumerate}
\end{prop}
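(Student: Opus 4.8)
The standing hypothesis that $J(S)$ is rational forces $\chi(S)=\chi(J(S))=1$ and $p_g(S)=q(S)=0$, hence $B\cong\PP^1$, and the exact sequence \eqref{eq: exact sequence Aut_R} reduces everything to bounding the two factors $|\Aut_{B,\QQ}(S)|$ and $|\Aut_\QQ(S)|_B|$. The unifying device is the homomorphism $\Phi\colon\Aut_f(S)\to\Aut_{J(f)}(J(S))$ of Lemma~\ref{lem: Jacobian}: since $\sigma$ and $\Phi(\sigma)$ induce the \emph{same} automorphism of $B$ and $\Phi$ preserves numerical triviality, one gets an inclusion of base actions $\Aut_\QQ(S)|_B\hookrightarrow\Aut_\QQ(J(S))|_B$, while $\ker\Phi\cap\Aut_\QQ(S)=\Aut_\QQ(S)\cap t(\MW(J(f))_\tor)$ accounts for fibrewise translations. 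I would first record these two consequences and then split according to the presence of additive fibres.

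For part (2) this is cleanest: an additive fibre forces $\Aut_\QQ(S)\cap t(\MW(J(f))_\tor)=\{\id_S\}$ by Lemma~\ref{lem: add fib}, so $\Phi$ embeds $\Aut_\QQ(S)$ into $\Aut_\QQ(J(S))$, whence it suffices to bound $|\Aut_\QQ(J(S))|$ for the rational \emph{jacobian} surface $J(S)$. The fibre-preserving part is governed by $\MW(J(S))_\tor$ (Lemma~\ref{lem: MW finite}) and by the generic-fibre automorphisms $\mu_r$; when $J(S)$ is not isotrivial only $\mu_2$ is available, giving $|\Aut_{B,\QQ}(S)|\le 2$, and the base factor is bounded through the inclusion into $\Aut_\QQ(J(S))|_B$ together with the fact that reducible fibres are fixed individually (Corollary~\ref{base-action}). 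The sharper combined bound $|\Aut_\QQ(S)|\le 4$ (rather than the naive product $6$) requires the explicit structure of $\Aut_\QQ(J(S))$: the two factors cannot simultaneously be maximal. The exceptional cases are precisely those in which $\mu_r$ with $r\ge 3$ acts fibrewise, namely the four isotrivial surfaces $X_{22},X_{33},X_{44},X_{11}(j)$ with fibre pairs $\{\II,\II^*\},\{\III,\III^*\},\{\IV,\IV^*\},\{\I_0^*,\I_0^*\}$ and $r=6,4,3,2$. In the semistable case of part (1) no additive fibre is present, but inversion is excluded by any reducible multiplicative fibre, so the fibrewise numerically trivial automorphisms are torsion translations; via Lemma~\ref{lem: im(t) vs trans} this embeds $\Aut_{B,\QQ}(S)$ into $\MW(J(S))_\tor$, giving $|\Aut_{B,\QQ}(S)|\le|\MW(J(S))_\tor|\le 9$, with equality only for $\MW=(\ZZ/3)^2$. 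I expect the main obstacle to lie exactly here: pinning the sharp constants $9$ and $4$, showing the base action is trivial in the semistable case (via Euler-number and Shioda--Tate bookkeeping fed into Corollary~\ref{base-action}), and identifying $X_{3333}$ and the four exceptions all require matching the admissible torsion and fibre configurations against the Miranda--Persson list \cite{MP86} and the classification of numerically trivial automorphisms of rational elliptic surfaces \cite{DM22}.

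Part (3) is the case where $S$ is forced to be isotrivial, its smooth fibres being isomorphic to those of the isotrivial $J(S)$, with two additive fibres over two points of $\PP^1$ that are individually fixed by every numerically trivial automorphism. For (3a) this means $\Aut_\QQ(S)|_B$ fixes two points of $\PP^1$, hence is a cyclic subgroup $\langle z\mapsto\zeta z\rangle$ of $\mathbb{G}_m$; since a nontrivial such element fixes only those two points, it acts \emph{freely} on the set of the $s$ multiple-fibre points (which lie away from the two additive fibres), so its order divides $s$. The identity $s=P_2(S)+1$ comes from the canonical bundle formula \eqref{eq: canonical bundle formula}: one has $2K_S\sim(s-2)F+\sum_{j}(m_j-2)F_j'$, so $P_2(S)=h^0(2K_S)=s-1$. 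For (3b) the additive fibre again makes $\Phi$ injective on $\Aut_\QQ(S)$, embedding $\Aut_{B,\QQ}(S)$ into $\Aut_{B,\QQ}(J(S))=\mu_r$; the properness when $r=6$ is exactly the content already established in the proof of Theorem~\ref{r<6}, namely that the order-$6$ and order-$2$ elements of $\mu_6$ fail to be numerically trivial on $S$ by the vanishing-of-holomorphic-$1$-forms argument.

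Finally, for the converse (3c) I would reuse the construction from the proof of Theorem~\ref{rs}. For each of the four surfaces $X$, fix the appropriate group $G=T\rtimes\mu_r$ and generic elliptic curve $E$, and take the Galois $G$-cover $C\to\PP^1$ branched over $0,\infty$ with monodromies $\sigma,\sigma^{-1}$ generating the $\mu_r$-part (these produce the two additive fibres realizing $J(S)=X$) and over the $s$-th roots of unity with translation monodromy (producing $s$ multiple fibres of smooth support). Then $S=(C\times E)/G$ satisfies $J(S)=X$ and $P_2(S)=s-1$, and the automorphism $\xi\mapsto\zeta_s\xi$ of $\PP^1$ lifts through $C$ to $\Phi_S\in\Aut(S)$ whose induced base action has order exactly $s$; its numerical triviality is checked as in Section~\ref{sec: isotrivial}, since $\Phi_S$ fixes the two additive fibres with all their components and preserves the second fibration, so the Mayer--Vietoris criterion (1)--(4) for numerical triviality applies. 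Combined with the divisibility bound of (3a), this yields $|\Aut_\QQ(S)|_B|=s=P_2(S)+1$, completing the proposition.
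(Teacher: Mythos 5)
Your proposal follows essentially the same route as the paper: the homomorphism $\Phi$ of Lemma~\ref{lem: Jacobian} and the exact sequences it induces, injectivity via Lemma~\ref{lem: add fib} in the presence of additive fibres, the Dolgachev--Martin classification for the bounds on $\Aut_\QQ(J(S))$, the torsion bound $|\MW(J(S))_\tor|\leq 9$ with $X_{3333}$ as the extremal case, the two-fixed-points argument plus the canonical bundle formula for (3a), and the isotrivial construction from Theorem~\ref{rs} for (3c). The one inaccuracy is in (3b): since $S$ is properly elliptic with rational Jacobian it must have multiple fibres, so the translation subgroup $T$ is nontrivial and the properness of $\Aut_{B,\QQ}(S)\subset\mu_6$ follows from the centralizer computation (the condition $\lambda t=t$ for all $t\in T$ excludes a generator of $\mu_6$), not from the vanishing-of-holomorphic-$1$-forms argument you cite, which handles the complementary case $G=\mu_6$ (no multiple fibres) in the proof of Theorem~\ref{r<6}.
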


%

\begin{proof}[Proof of Proposition~\ref{prop: pg=0 bd J(S)}]
Consider the homomorphism 
$$\Phi\colon \Aut_f(S)\rightarrow \Aut_{J(f)}(J(S))
$$ 
of Lemma~\ref{lem: Jacobian}. Since $S$  has Kodaira dimension $1$, and $f$ is the pluricanonical fibration,
while, for  $J(S)$,  $J(f)$ is the plurianticanonical fibration in view of the canonical bundle formula \eqref{eq: canonical bundle formula},
and of the fact that  there are no multiple fibres, we have
\[
 \Aut_f(S) = \Aut(S),\quad  \Aut_{J(f)}(J(S)) = \Aut(J(S))
\]
  hence $\Phi$ is indeed a homomorphism between the full automorphism groups. Restricting $\Phi$ to $\Aut_\QQ(S)$, we obtain, by Lemma~\ref{lem: Jacobian}, the exact sequences
\begin{equation}\label{eq: AutQ(S) rational}
\begin{split}
& 1\rightarrow t(\MW(J(S)))\cap \Aut_\QQ(S) \rightarrow \Aut_\QQ(S) \rightarrow \Aut_\QQ(J(S)),\\
& 1\rightarrow t(\MW(J(S)))\cap \Aut_{B,\QQ}(S) \rightarrow \Aut_{B, \QQ}(S) \rightarrow \Aut_{B, \QQ}(J(S)), \\
&1\rightarrow \Aut_\QQ(S)|_B\rightarrow \Aut_\QQ(J(S))|_B.
\end{split}
\end{equation}

\medskip

(1) We have $|\MW(J(S))_\tor|\leq 9$ in any case (\cite[Corollary~8.21]{SS19}), 
and  equality holds exactly if $J(S)=X_{3333}$ with $\MW(J(S)) \cong (\ZZ/3)^2$.  If $J(f)$ is semi-stable, then it is not in \cite[Table 3]{DM22} and hence $\Aut_\QQ(J(f))$ is trivial. By the first exact sequence of \eqref{eq: AutQ(S) rational}, we obtain
\[
 \Aut_\QQ(S) \subset t(\MW(J(S))),
\]
and (1) follows  since the finiteness of $\Aut_\QQ(S)$ implies that $\Aut_\QQ(S)\subseteq 
t(\MW(J(S))_\tor)$.

This also proves that $\Aut_\QQ(S)$ is abelian and 2-generated.

(2) If $f$ admits an additive fibre, then by Lemma~\ref{lem: add fib}, the homomorphism $\Aut_{\QQ}(S)\rightarrow \Aut_{\QQ}(J(S))$ of Lemma~\ref{lem: Jacobian} is injective, 
and the same is true for the restrictions to $\Aut_{B,\QQ}$ and $\Aut_\QQ|_B$ by \eqref{eq: AutQ(S) rational}.

If $J(S)$ is not one of the surfaces $X_{22}$, $X_{33}$, $X_{44}$, $X_{11}(\lambda)$, then by \cite[Theorem~6.4]{DM22}, we have
\[
|\Aut_{B, \QQ}(J(S))|\leq 2, \quad |\Aut_\QQ(J(S))|_B|\leq 3,\quad |\Aut_\QQ(J(S))|\leq 4.
\]

Now (2) follows from these bounds since, by injectivity,  $|\Aut_{\QQ}(S)|\leq |\Aut_{\QQ}(J(S))|$ etc.

\medskip

(3) If $J(S)$ is one of the surfaces $X_{22}, X_{33}$, $X_{44}$, $X_{11}(\lambda)$, there is a pair of fibres 
  with singular support, i.e. with $F_{red}$ singular. 
  We may assume them to be $f^*0$ and $f^*\infty$; obviously they are preserved by $\Aut_\QQ(S)$. 
  It follows that $\Aut_\QQ(S)|_B$ fixes the two points $0$ and $\infty$ of $B=\PP^1$, 
  and hence is finite cyclic; in particular, all other orbits are free. 
Since $S$ is properly elliptic, the canonical bundle formula \eqref{eq: canonical bundle formula} implies
that there are $s\geq 2$ multiple fibres, say $f^*b_i = m_i F'_i$ with $1\leq i\leq s$. 
Then $\{b_i\mid 1\leq i\leq s\}$ consists of free orbits under $\Aut_\QQ(S)|_B$, 
and it follows that the order of $\Aut_\QQ(S)$ divides $s$.

On the other hand, spelling out the canonical bundle formula \eqref{eq: canonical bundle formula}, we have
\begin{eqnarray}
\label{eq:cbf}
K_S = f^*(K_B+L) + \sum_{1\leq i\leq s} (m_i-1) F'_i
\end{eqnarray}
where $L$ is a divisor on $B$ with $\deg L=\chi(S)=1$. 
Therefore, 
\begin{eqnarray*}
P_2(S) & = &
h^0\left(S, f^*(2K_B+2L) + \sum_{1\leq i\leq s} \underbrace{(2m_i-2) F'_i}_{=f^*b_i+(m_i-2)F'_i}\right)\\
&
= &
h^0\left(S, f^*\left(2K_B+2L+ \sum_{1\leq i\leq s} b_i\right)\right)
\\
& = & h^0\left(B, 2(K_B+L)+\sum_{i=1}^s b_i\right) = s-1.
\end{eqnarray*}
It follows that the order of $\Aut_\QQ(S)$ divides $s=P_2(S)+1$ 
 and (3a) is proved.

(3b) Since $\Aut_{B,\QQ}(S)\cap t(\MW(J(S)))$ is trivial by Lemma~\ref{lem: add fib}, the second exact sequence of \eqref{eq: AutQ(S) rational} shows that $\Aut_{B,\QQ}(S)\rightarrow \Aut_{B, \QQ}(J(S))$  is an injection.
But then, by  \cite{DM22} (or by direct computation), $ \Aut_{B, \QQ}(J(S)) =\mu_r$
 where $r=6,4,3,2$ depends on $J(S)=X_{22}, X_{33}, X_{44}, X_{11}(\lambda)$ respectively.
 
 The last assertion of Proposition \ref{prop: pg=0 bd J(S)} (3) (b) will be proven  in Lemma \ref{lem:r=6}.

 This shows (3b).

 For (3c), concerning 
the existence of surfaces with large  $\Aut_\QQ(S)|_B$,
see Proposition \ref{order3} for $s$ divisible by $3$,  
see  Lemma \ref{lem:2s} for $s$ odd, and finally, for $s$ even, see Theorem \ref{prop:X_33} or, again, Lemma \ref{lem:2s}.
\end{proof}

\subsection{Cohomologically trivial automorphisms}

Next we study the cohomologically trivial automorphism group $\Aut_\ZZ(S)$, mainly in the case where $J(S)$ is rational. 
Then the bounds from Proposition \ref{prop: pg=0 bd J(S)} (1) carry over directly,
so we will be mainly concerned with the isotrivial cases 
from Proposition \ref{prop: pg=0 bd J(S)} (2) and (3),
but we keep the arguments as general as possible.
As a quick reference, we record the following special case of Corollary \ref{base-action}.

\begin{lem}\label{lem: AutZ g=0}
Let $f\colon S\rightarrow \PP^1$ be a relatively minimal  elliptic surface with $\chi(S)>0$. Then  
\begin{enumerate}
\item $\Aut_\ZZ(S)$ preserves each multiple fibre of $f$.
\item $\Aut_\ZZ(S)|_{\PP^1}=\{\id_{\PP^1}\}$ unless $f$ has at most 2 multiple or reducible fibres  in total.
\end{enumerate}
\end{lem}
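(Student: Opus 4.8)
The plan is to handle both parts through the action of $\Psi\in\Aut_\ZZ(S)$ on $\Pic(S)$, rather than through a case analysis by Kodaira type. Since $f$ is the pluricanonical fibration, every $\Psi$ preserves it and induces $\Psi_B\in\Aut(B)$ permuting the fibres while respecting their multiplicities. The structural observation I would exploit is that $\chi(S)>0$ forces $q(S)=g(B)=0$, so that $c_1$ embeds $\Pic(S)=\NS(S)\hookrightarrow H^2(S,\ZZ)$; hence the hypothesis $\Psi^*=\id$ on $H^2(S,\ZZ)$ yields $\Psi^*=\id$ on all of $\Pic(S)$, torsion included. This integral refinement is exactly what separates $\Aut_\ZZ$ from $\Aut_\QQ$ here.

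For (1) I would argue as follows. Let $F_i=m_iF_i'$ with $m_i\ge2$ be a multiple fibre and suppose toward a contradiction that $\Psi$ moves it, say $\Psi(F_i')=F_j'$ with $j\neq i$. Comparing classes gives $[F_i']=\Psi^*[F_j']=[F_j']$ in $\Pic(S)$, so $F_i'\sim F_j'$. The decisive input is the rigidity of the reduced multiple fibres: because $\sO_{F_i'}(F_i')$ is a torsion element of order exactly $m_i\ge2$ in $\Pic^0(F_i')$, one gets $h^0(\sO_S(F_i'))=1$, i.e.\ $F_i'$ is the only effective divisor in its linear class. Hence $F_j'\in|F_i'|=\{F_i'\}$ forces $F_j'=F_i'$, contradicting $b_i\neq b_j$. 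Thus $\Psi$ fixes every multiple fibre. I would emphasize that this uniform argument subsumes properties (3) and (4) of Section~\ref{s:prel} (reducible support, resp.\ type ${}_m\I_0$) and, crucially, disposes of the multiple nodal fibre ${}_m\I_1$, for which a purely homological bookkeeping on $H_1(S,\ZZ)$ alone cannot rule out an interchange of two double fibres (the classes $\gamma_i$ can coincide when $m_i=2$).

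For (2) I would feed (1) into a fixed-point count on $B\cong\PP^1$. By (1), $\Aut_\ZZ(S)|_B$ fixes the base point of every multiple fibre. Each component of a reducible fibre is a smooth rational curve of negative self-intersection and is therefore fixed by the numerically trivial $\Psi$ (property (1) of Section~\ref{s:prel}); hence every reducible fibre is preserved and its base point is fixed as well. Consequently $\Aut_\ZZ(S)|_B\subset\Aut(\PP^1)$ fixes the base point of each multiple or reducible fibre, and since a nontrivial automorphism of $\PP^1$ has at most two fixed points, $\Aut_\ZZ(S)|_B$ must be trivial whenever there are at least three multiple or reducible fibres in total; this is the contrapositive of the assertion, and it is consistent with Corollary~\ref{base-action}. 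The only non-formal ingredient in the whole scheme is the rigidity equality $h^0(\sO_S(F_i'))=1$, so that is the step I expect to be the main obstacle and would justify with care; the remainder is manipulation inside $\Pic(S)\hookrightarrow H^2(S,\ZZ)$ together with the elementary geometry of $\PP^1$.
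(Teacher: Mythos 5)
Your proof is correct. For part (2) you argue exactly as the paper does: the induced automorphism of $B\cong\PP^1$ fixes the base point of every multiple or reducible fibre (the latter because fibre components have negative self-intersection and are therefore preserved), and a nontrivial automorphism of $\PP^1$ has at most two fixed points. For part (1), however, the paper gives no argument at all — it simply cites Lemma~2.1 of Part I \cite{CFGLS24} — whereas you supply a genuinely self-contained proof: since $\chi(S)>0$ forces $q(S)=g(B)=0$, the map $c_1\colon\Pic(S)\to H^2(S,\ZZ)$ is injective, so a cohomologically trivial $\Psi$ fixes every class in $\Pic(S)$ including torsion; combined with the rigidity $h^0(\sO_S(F_i'))=1$ (which follows from $0\to\sO_S\to\sO_S(F_i')\to\sO_{F_i'}(F_i')\to0$ and Kodaira's theorem that the normal bundle of the reduced multiple fibre is torsion of order exactly $m_i\geq 2$, hence has no sections), the identity $[F_i']=[F_j']$ forces $F_i'=F_j'$. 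This is a clean argument that uniformly covers the case of multiple fibres of type ${}_m\I_1$, which is not obviously handled by the fibre-invariance properties (3) and (4) recorded in Section~\ref{s:prel}; the only cost is that it is specific to the integral (cohomologically trivial) setting, since it hinges on $\Psi^*$ fixing the torsion of $\Pic(S)$, whereas the cited external lemma is what the paper relies on throughout. Your remark that the rigidity equality is the one non-formal ingredient is accurate, and your justification of it is the standard one.
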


%
%
%
%
%

\begin{corollary}
\label{cor: rational J(S) AutSB}
If $f\colon S\rightarrow B$ is a minimal properly elliptic surface 
 with $\chi(S)>0$ such that $J(S)$ is rational, 
 then $\Aut_\ZZ(S)|_B$ is trivial.
\end{corollary}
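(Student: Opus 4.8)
The plan is to combine Lemma~\ref{lem: AutZ g=0} with a Lefschetz fixed-point count, reducing everything to a single exceptional configuration. Since $J(S)$ is rational we have $p_g(S)=0$, hence $q(S)=0$ and $B\cong\PP^1$, while $\chi(S)=1$. By Lemma~\ref{lem: AutZ g=0}(2), $\Aut_\ZZ(S)|_B$ is trivial unless $f$ has at most two multiple or reducible fibres in total; on the other hand, the canonical bundle formula \eqref{eq: canonical bundle formula} forces at least two multiple fibres (as $\deg(K_B+L)=-1$ on $\PP^1$ and $S$ is properly elliptic). Thus I may assume there are exactly two multiple fibres $F_1=m_1 F_1'$, $F_2=m_2F_2'$, of Kodaira types $\,_{m_1}\I_{k_1}$, $\,_{m_2}\I_{k_2}$, and no further reducible fibre; all remaining singular fibres are then irreducible, of type $\I_1$ or $\II$. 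Suppose for contradiction that some $\sigma\in\Aut_\ZZ(S)$ has $\bar\sigma:=\sigma|_B\neq\id_B$. As $\bar\sigma$ fixes exactly the two points $b_1,b_2\in\PP^1$ carrying $F_1,F_2$, it permutes the remaining ($\I_1$- and $\II$-)fibres without further fixed points.

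The core is a Lefschetz computation. Because $\sigma$ is cohomologically trivial, $\sigma^*=\id$ on $H^*(S,\QQ)$, so its Lefschetz number equals $\sum(-1)^ib_i=e(S)=12$, whence $e(S^\sigma)=12$. Since $f(S^\sigma)\subseteq\Fix(\bar\sigma)=\{b_1,b_2\}$, the fixed locus satisfies $S^\sigma\subseteq F_1\cup F_2$, and therefore $e(S^\sigma)=e((F_1)_\red^\sigma)+e((F_2)_\red^\sigma)$. Here the key input is that $F_1,F_2$ are \emph{multiple} fibres: Lemma~\ref{lem: mI0} (for $k_i=0$) and Lemma~\ref{lem: mIk} (for $k_i\geq1$) then give $e((F_i)_\red^\sigma)\in\{0,k_i\}$, hence $12\leq k_1+k_2$. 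This I play off against the Shioda--Tate formula \eqref{eq:ST} for the rational surface $J(S)$, whose reducible fibres are exactly the $\I_{k_i}$: it reads $8=\rank\MW(J(S))+\sum_v(m_v-1)$, so $(k_1-1)_++(k_2-1)_+\leq 8$ and thus $k_1+k_2\leq10$. This contradicts $k_1+k_2\geq12$ and finishes the argument.

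The main obstacle is that Lemmas~\ref{lem: mI0} and~\ref{lem: mIk} require $\sigma$ and $\bar\sigma$ to have \emph{equal} order, whereas a priori the order of $\sigma$ is only a multiple of $d:=\mathrm{ord}(\bar\sigma)$. I would dispose of this by first treating the semistable case separately: if $f$ has no additive fibre then $J(S)$ is semistable and Proposition~\ref{prop: pg=0 bd J(S)}(1) already gives $\Aut_\QQ(S)|_B=\{\id_B\}$, which settles the claim. If $f$ has an additive fibre, it must be of type $\II$ (a reducible additive fibre would be a third multiple-or-reducible fibre), so $\MW(J(S))_\tor=0$ by \eqref{eq:FF} and $J(S)$ is none of $X_{22},X_{33},X_{44},X_{11}(j)$; hence $|\Aut_\QQ(J(S))|\leq4$ by Proposition~\ref{prop: pg=0 bd J(S)}(2). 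Pushing forward through the homomorphism $\Phi$ of Lemma~\ref{lem: Jacobian} --- which is injective on $\langle\sigma\rangle$ since $\Ker\Phi=t(\MW(J(f)))$ meets $\langle\sigma\rangle$ only in finite-order translations, of which there are none as $\MW(J(S))_\tor=0$ --- bounds the order of $\sigma$ by $4$. The cases where $\mathrm{ord}(\sigma)=d$ feed directly into the Lefschetz argument above; the only residual case, $\mathrm{ord}(\sigma)=4$ with $d=2$, forces $\sigma^2$ to be a nontrivial fibre-preserving cohomologically trivial involution fixing the zero section of $J(S)$, i.e.\ the inversion $[-1]$, whose numerical triviality is incompatible with $\MW(J(S))_\tor=0$ and the large reducible fibres demanded above. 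This order-bookkeeping is the technical heart of the proof.
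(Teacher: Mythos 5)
Your argument is correct in its main thrust but follows a genuinely different route from the paper's. The paper's proof is a short reduction: a nontrivial $\Aut_\ZZ(S)|_B$ forces $\Aut_\QQ(J(S))|_B\neq\{\id_B\}$ via the homomorphism $\Phi$ of Lemma~\ref{lem: Jacobian}, so $J(S)$ appears in Table~3 of \cite{DM22}; inspection of that table shows that the total count of reducible and multiple fibres is at least $3$, contradicting Lemma~\ref{lem: AutZ g=0}. You instead make the contradiction topological: after the same reduction to exactly two multiple fibres of types $_{m_i}\I_{k_i}$ and no further reducible fibre, the Lefschetz number $e(S^\sigma)=e(S)=12$ must be absorbed entirely by the two invariant fibres, where Lemmas~\ref{lem: mI0} and~\ref{lem: mIk} cap the contribution at $k_1+k_2$, while the Shioda--Tate formula \eqref{eq:ST} for the rational surface $J(S)$ gives $(k_1-1)_++(k_2-1)_+\leq 8$, hence $k_1+k_2\leq 10$. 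This is an attractive, essentially self-contained deployment of the local lemmas of Section~2 (which the paper proves but does not use for this corollary); the price is the order-matching hypothesis in those lemmas, and to control orders you still fall back on Proposition~\ref{prop: pg=0 bd J(S)}, i.e.\ ultimately on \cite{DM22}, so the dependence on Dolgachev--Martin is relocated rather than removed.

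One step needs to be firmed up: the residual case $\mathrm{ord}(\sigma)=4$, $d=2$. As written, ``incompatible with $\MW(J(S))_\tor=0$ and the large reducible fibres demanded above'' does not quite parse, because the bound $k_1+k_2\geq 12$ was derived only under the hypothesis $\mathrm{ord}(\sigma)=d$ and is not available here. The correct completion is: $\Phi(\sigma^2)$ is a nontrivial order-two element of $\Aut_{B,\QQ}(J(S))$ preserving the zero section, hence equals $[-1]_{J(S)}$; being numerically trivial it sends each section $P$ to a section $-P$ of the same numerical class, forcing $2P=O$ (distinct sections have distinct classes since $P^2=-1$), so $\MW(J(S))\subseteq\MW(J(S))_\tor=0$; Shioda--Tate then yields $(k_1-1)_++(k_2-1)_+=8$, whence some $k_i\geq 5$; but $[-1]$ sends the component $\Theta_j$ of that $\I_{k_i}$ fibre to $\Theta_{-j}$, a nontrivial permutation for $k_i\geq 3$, contradicting numerical triviality. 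With that inserted, your proof is complete.
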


\begin{proof}

Suppose on the contrary that $\Aut_\ZZ(S)|_B$ is nontrivial. Then so is $\Aut_\ZZ(J(S))|_B$, and $J(S)$ lands in \cite[Table~3]{DM22}. Since $S$ has Kodaira dimension 1 while $J(S)$ is rational, there exist at least two multiple fibres of $f$ by the canonical bundle formula \eqref{eq: canonical bundle formula}. Then, by inspecting \cite[Table~3]{DM22}, the total number of reducible fibres and multiple fibres is at least 3.  
It follows from Lemma~\ref{lem: AutZ g=0} (2) that $\Aut_\ZZ(S)|_B$ is trivial. 
\end{proof}

\begin{rmk}
If $J(S)$ is not rational, then $\Aut_\QQ(S)|_B$ and hence also $\Aut_\ZZ(S)|_B$ will be shown to be trivial in Proposition~\ref{prop: trivial Aut_Q on B}.
\end{rmk}

\begin{corollary}
\label{cor: pg=0 Aut_Z}
Let $f\colon S\rightarrow B$ be a minimal properly elliptic surface such that $J(S)$ is a rational elliptic surface. Then 
$|\Aut_\ZZ(S)|=|\Aut_{B,\ZZ}(S)| \leq  9$, and equality may hold only if $J(S)=X_{3333}$.
\end{corollary}

\begin{proof}
 The first equality follows from  Corollary~\ref{cor: rational J(S) AutSB} 
combined with \eqref{eq:es} restricted to $\Aut_\ZZ(S)$, 
and, since $\Aut_{B,\ZZ}(S)\subseteq\Aut_{B,\QQ}(S)$,  the inequality 
follows from Proposition~\ref{prop: pg=0 bd J(S)}, and 
 the characterization of the  case
of potential equality   follows from Proposition~\ref{prop: pg=0 bd J(S)}(1).
%
\end{proof}

\begin{remark}
We don't know whether the  bounds in Corollary \ref{cor: pg=0 Aut_Z} are sharp.

Sections \ref{s:2} and \ref{s:3} exhibit non-isotrivial properly elliptic surfaces
with $|\Aut_\ZZ(S)|=2$ and $3$,
 and same for isotrivial fibrations in Section \ref{s:iso2}.

\end{remark}

\subsection{The case where $J(S)$ is not rational}
\begin{prop}\label{prop: trivial Aut_Q on B}
Let $f\colon S\rightarrow B$ be a minimal properly elliptic surface with $\chi(S)>0$ such that $J(S)$ is not rational. Then the group $\Aut_\QQ(S)|_B$ is trivial.
\end{prop}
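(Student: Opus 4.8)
The plan is to reduce the statement to the relative Jacobian and then to exploit that numerically trivial automorphisms act trivially on holomorphic two-forms, which are pulled back from $B$. First I would pass from $S$ to $X:=J(S)$ by means of Lemma~\ref{lem: Jacobian}: since the homomorphism $\Phi$ sends $\Aut_\QQ(S)$ into $\Aut_\QQ(X)$ by part (3) while inducing the \emph{same} automorphism of $B$ by part (1), one obtains the inclusion $\Aut_\QQ(S)|_B\subseteq\Aut_\QQ(X)|_B$. Thus it suffices to show that $\Aut_\QQ(X)|_B$ is trivial. Here $X$ is jacobian, hence has no multiple fibres, and the hypothesis that $J(S)$ is not rational is equivalent to $p_g(X)=p_g(S)>0$.

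Next, let $\tau\in\Aut_\QQ(X)$ induce $\tau_B$ on $B$. Being numerically trivial, $\tau$ acts as the identity on $H^{2,0}(X)=H^0(X,K_X)$. By the canonical bundle formula \eqref{eq: canonical bundle formula}, with no multiple fibres present, $K_X=f^*(K_B+L)$ with $\deg L=\chi(X)>0$, so $H^0(X,K_X)=f^*H^0(B,K_B+L)$; since $\tau$ commutes with $f$ via $\tau_B$ and $f^*$ is injective on divisors, I would conclude that $\tau_B$ fixes every divisor in the linear system $|K_B+L|$. Writing $M:=K_B+L$, so $\deg M=2g(B)-2+\chi(X)$, the whole problem reduces to deducing $\tau_B=\mathrm{id}_B$ from the fact that $\tau_B$ fixes $|M|$ divisor-by-divisor.

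For $g(B)\geq 2$ this already follows from Corollary~\ref{base-action}, so I may assume $g(B)\leq 1$. If $g(B)=0$ and $\chi(X)\geq 3$, then $\deg M\geq 1$ and the divisors $(\deg M)\cdot p\in|M|$ are all fixed, forcing $\tau_B(p)=p$ for every $p\in\PP^1$, hence $\tau_B=\mathrm{id}$. If $g(B)=1$, then $\tau_B$ acts trivially on $H^1(B,\QQ)$ (because $f^*$ is an isomorphism there), so $\tau_B$ is a translation; for $\chi(X)\geq 3$ the bundle $M$ is very ample and fixing $|M|$ pointwise gives $\tau_B=\mathrm{id}$, for $\chi(X)=1$ the single point of the unique divisor in $|M|$ is a fixed point of the translation $\tau_B$, and for $\chi(X)=2$ the morphism $\phi_{|M|}\colon B\to\PP^1$ has degree $2$, so a $\tau_B$ preserving all of its fibres would be a deck transformation — but the deck involution has fixed points and hence is not a nontrivial translation. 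In each of these cases $\tau_B=\mathrm{id}_B$.

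The remaining case, $g(B)=0$ with $\chi(X)=2$ (the value $\chi(X)=1$ being excluded, as it would give $p_g=0$), is the genuinely degenerate one: there $\deg M=0$, so the two-forms carry no information about $B$. I expect this to be the main obstacle. The way out is to observe that $X=J(S)$ is then a K3 surface, on which the representation of $\Aut(X)$ on $H^2(X,\ZZ)$ is faithful by the global Torelli theorem; hence $\Aut_\QQ(X)=\{\mathrm{id}_X\}$ and a fortiori $\tau_B=\mathrm{id}_B$. This is precisely why reducing to the Jacobian is essential rather than cosmetic: $S$ itself is properly elliptic and offers no Torelli input, whereas the accumulation of small-$\chi$ cases — where the (pluri)canonical system on $B$ is too meager to separate points — is resolved only after $S$ has been replaced by the K3 surface $J(S)$.
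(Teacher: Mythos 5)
Your proof is correct. It shares the paper's essential strategy --- numerical triviality forces trivial action on $H^{2,0}$ and on $H^1$, which pins down the induced automorphism of $B$ through the linear system $|K_B+L|$ and through $H^1(B,\QQ)$, while the degenerate case $g(B)=0$, $\chi=2$ is settled by the global Torelli theorem for the K3 surface $J(S)$ --- but it is organized differently in two respects. First, you pass to the Jacobian $X=J(S)$ once and for all via Lemma~\ref{lem: Jacobian}, whereas the paper argues directly on $S$ (its canonical bundle formula \eqref{eq: canonical bundle formula} has extra multiple-fibre terms, which only increase the degree of the relevant divisor and so do no harm) and invokes $J(S)$ only in the K3 case; your reduction is legitimate and buys the cleaner identity $K_X=f^*(K_B+L)$, at the price of routing every case through the homomorphism $\Phi$ of Dolgachev--Martin. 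Second, for $g(B)=1$ the paper handles all values of $\chi$ uniformly: $\tau_B$ is a translation, and since $e(S)=12\chi(S)>0$ the topological Lefschetz fixed point theorem gives $S^{\tau}\neq\emptyset$, hence a fixed point on $B$. You instead split according to $\deg(K_B+L)=\chi$, using very ampleness for $\chi\geq 3$, the unique effective divisor for $\chi=1$, and the degree-two map for $\chi=2$ (where the only nontrivial deck transformation is the hyperelliptic involution, which acts as $-1$ on $H^1(B)$ and so cannot be a nontrivial translation). Your case analysis is correct but longer; the Lefschetz argument is the more economical choice. The remaining pieces --- genus $\geq 2$ via Corollary~\ref{base-action}, separation of points on $\PP^1$ for $\chi\geq 3$, and the exclusion of $g(B)=0$, $\chi=1$ because $J(S)$ would then be rational --- coincide with the paper's reasoning.
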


\begin{proof}
We can divide the discussion into the following three cases:
\begin{enumerate}
\item $g(B)\geq 1$;
\item $\chi(S)\geq 3$;
\item $g(B)=0$ and $\chi(S)=2$.
\end{enumerate} 
Indeed, $J(S)$ is rational if and only if $\chi(S)=1$ and $g(B)=0$.
Obviously, this is the only case with $\chi(S)>0$ left by the above three cases.

Take $\sigma\in\Aut_\QQ(S)$, and let $\sigma|_B$ denote its image in $\Aut_\QQ(S)|_B$. We need to show that $\sigma|_B=\id_B$ if one of the above conditions holds.

\medskip 

\noindent (1) First suppose that $g(B)\geq 1$. We have $f^*\colon H^1(B, \QQ)\hookrightarrow H^1(S, \QQ)$, and the inclusion is compatible with the actions of $\Aut_\QQ(S)$ and $\Aut_\QQ(S)|_B$ on the source and the target respectively. Since $\sigma$ induces the trivial action on $H^1(S, \QQ)$, $\sigma|_B$ acts trivially on $H^1(B, \QQ)$.  If $g(B)\geq 2$, it follows from Lefschetz' fixed point theorem 
that the only numerically trivial automorphism is $\id_B$. 
If $g(B)=1$ then $B$ is an elliptic curve and $\sigma|_B$ acts as a translation on $B$. Since $e(S) =12\chi(S)>0$, by the topological Lefschetz fixed point theorem, $S^\sigma\neq \emptyset$. It follows that 
\[
\emptyset\neq f(S^\sigma)\subset B^{\sigma|_B}
\]
and $\sigma|_B$ must be the identity.

\medskip 

\noindent {(2)} Now assume that $\chi(S)\geq 3$. By the canonical bundle formula \eqref{eq: canonical bundle formula}, we have
\[
|K_S| = f^*|K_B+L| + \sum_{1\leq i\leq r} (m_i-1) F_i
\]
where $L$ is a divisor on $B$ with $\deg L=\chi(S)$.
Thus the canonical map $\varphi_{S}$ of $S$ factors through $B$ as follows:
\begin{equation*}
\varphi_{S}\colon S\xrightarrow{f} B\xrightarrow{\varphi_{B}} \PP^{p_g-1}
\end{equation*}
where $p_g = h^0(S, K_S)$ is the geometric genus of $S$ and $\varphi_B$ is the morphism associated to the linear system $|K_B+L|$. Since $\deg L = \chi(S)\geq 3$, the morphism $\varphi_{K_B+L}$ is an embedding. 

Then we have a commutative diagram
\begin{equation*}
\begin{tikzcd}
S\arrow[r, "f"] \arrow[d, "\sigma"]&  B\arrow[r, "\varphi_{B}"]  \arrow[d, "\sigma|_B"]& \PP^{p_g-1} \arrow[d, equal]\\
S\arrow[r, "f"] &  B\arrow[r, "\varphi_{B}"] & \PP^{p_g-1} 
\end{tikzcd}
\end{equation*}
where the last vertical equality is because $\sigma$ acts trivially on $H^0(S, K_S)$, which is a direct summand of $H^2(S, \CC)$ in the Hodge decomposition. It follows that $\sigma|_B = \id_B$, and  $\Aut_\QQ(S)|_B = \{\id_B\}$.

\medskip 

\noindent {(3)} Finally, assume that  $g(B)=0$ and $\chi(S)=2$. 
Then the relative Jacobian $J(S)$ over $B$ is a K3 surface. 
By the  proof of the Torelli theorem for K3 surfaces (\cite{PS72}, \cite{br})  one knows that $\Aut_\QQ(J(S))$ is trivial. 
Via the homomorphism \eqref{eq: Jacobian}, one sees that
\[
\Aut_\QQ(S)|_B = \Aut_\QQ(J(S))|_B = \{\id_B\}.
\]
This completes the proof of Proposition \ref{prop: trivial Aut_Q on B}.
\end{proof}

\begin{remark}
Note that $\Aut_\QQ(S)$ may be non-trivial,
even if $J(S)$ is a K3 surface.
This will follow from Construction \ref{const: log transform}
applied to suitable elliptic K3 surfaces
admitting a torsion section (as in the proof of Theorem \ref{thm:p_g>0} (ii)
at the end of Subsection \ref{s:pf1-ii}).
\end{remark}

\begin{cor}\label{cor: pg>0 add fib 2}
Let $f\colon S\rightarrow B$ be a minimal properly elliptic surface with $\chi(S)>0$. If $p_g(S)>0$ and if $f$ admits a fibre of additive type  or a section, then $\Aut_\QQ(S)$ is trivial.
\end{cor}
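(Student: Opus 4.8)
The plan is to exploit the exact sequence \eqref{eq: exact sequence Aut_R} with $R=\QQ$,
\[
1\rightarrow \Aut_{B, \QQ}(S) \rightarrow \Aut_\QQ(S) \rightarrow \Aut_\QQ(S)|_B \rightarrow 1,
\]
which reduces the triviality of $\Aut_\QQ(S)$ to the separate triviality of the two outer terms $\Aut_{B,\QQ}(S)$ and $\Aut_\QQ(S)|_B$. Both hypotheses of the corollary (an additive fibre, or a section) will feed into these two terms, so the strategy is to dispose of the base action uniformly and then treat the fibre-preserving part case by case.

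For the action on the base, I would first record that $S$ and its relative Jacobian $J(S)$ share the same geometric genus, so the hypothesis $p_g(S)>0$ forces $p_g(J(S))>0$; equivalently, $J(S)$ is not rational. Proposition~\ref{prop: trivial Aut_Q on B} then applies directly and yields $\Aut_\QQ(S)|_B=\{\id_B\}$. This disposes of the right-hand term of the exact sequence uniformly: it uses nothing beyond $p_g(S)>0$ and is insensitive to which of the two alternative hypotheses holds.

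It remains to kill the fibre-preserving part $\Aut_{B,\QQ}(S)$, and here the argument bifurcates according to the two alternatives. If $f$ carries a fibre of additive type, this is precisely the content of Corollary~\ref{cor: pg>0 add fib 1}. If instead $f$ admits a section, then $S$ is a jacobian elliptic surface, so in particular $S=J(S)$ and there are no multiple fibres; one then applies Lemma~\ref{lem: MW finite} with $X=S$, whose conclusion~(2) states that a nontrivial $\Aut_{B,\QQ}(S)$ would force $p_g(S)=0$. Since $p_g(S)>0$ by assumption, the contrapositive gives that $\Aut_{B,\QQ}(S)$ is trivial. Combining, both outer terms of the exact sequence vanish, whence $\Aut_\QQ(S)=\{\id_S\}$.

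The content is essentially the assembly of the three preceding results, so there is no genuinely new obstacle to overcome; the only point demanding care is to verify that the two hypotheses are covered by complementary tools — additive fibres through the additive-component/Mordell--Weil analysis behind Corollary~\ref{cor: pg>0 add fib 1}, and the section hypothesis through the torsion-section component argument underlying Lemma~\ref{lem: MW finite} — while the base action is handled once and for all by Proposition~\ref{prop: trivial Aut_Q on B}. I would also make explicit the step $p_g(S)=p_g(J(S))$, since it is what converts the hypothesis $p_g(S)>0$ into the non-rationality of $J(S)$ required by Proposition~\ref{prop: trivial Aut_Q on B}.
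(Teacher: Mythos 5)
Your proposal is correct and follows essentially the same route as the paper: the paper's proof is the one-line chain $\Aut_{\QQ}(S) = \Aut_{B,\QQ}(S) = \{\id_S\}$, where the first equality is Proposition~\ref{prop: trivial Aut_Q on B} (applicable because $p_g(J(S))=p_g(S)>0$ makes $J(S)$ non-rational) and the second is Corollary~\ref{cor: pg>0 add fib 1} in the additive case resp.\ Lemma~\ref{lem: MW finite}~(2) in the section case. Your explicit appeal to the exact sequence \eqref{eq: exact sequence Aut_R} and your spelling out of the step $p_g(S)=p_g(J(S))$ are just more verbose renderings of the same argument.
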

\begin{proof}
We have
\[
\Aut_{\QQ}(S) = \Aut_{B,\QQ}(S) = \{\id_S\}
\]
where the first equality is by Proposition~\ref{prop: trivial Aut_Q on B} 
and the second is by Corollary~\ref{cor: pg>0 add fib 1}  resp.\ Lemma \ref{lem: MW finite} (2).
\end{proof}

\begin{cor}\label{cor: p_g>0 isotrivial}
Let $f\colon S\rightarrow B$ be a minimal properly elliptic surface with $\chi(S)>0$. If $p_g(S)>0$ and if $f$ is isotrivial, then $\Aut_\QQ(S)$ is trivial.
\end{cor}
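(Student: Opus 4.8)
The plan is to deduce the statement directly from Corollary~\ref{cor: pg>0 add fib 2}: since that result already gives $\Aut_\QQ(S)=\{\id_S\}$ as soon as $p_g(S)>0$ and $f$ has a fibre of additive type, it suffices to prove that \emph{every} isotrivial fibration with $\chi(S)>0$ possesses a fibre of additive type. Thus the entire content of the corollary reduces to this single structural observation.

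To establish it, I would first use that isotriviality forces the $j$-invariant of the smooth fibres to be constant, so $f$ has no multiplicative fibres, i.e.\ no fibre of Kodaira type $\I_n$ with $n\geq 1$; this is exactly the observation already recorded in the proof of Proposition~\ref{prop: bd MW}. Next I would invoke the Euler number formula \eqref{eq: e(S)}, namely $12\chi(S)=e(S)=\sum_i e(f^*b_i)$, together with the hypothesis $\chi(S)>0$, to produce a singular fibre of strictly positive Euler number. Since smooth fibres and smooth multiple fibres (type ${}_m\I_0$, whose reduced support is a smooth elliptic curve) each contribute $0$ to $e(S)$, and since the multiplicative fibres $\I_n$ $(n\geq 1)$ have just been excluded, the positivity of $e(S)$ can only be accounted for by a fibre of additive type. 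This yields the desired additive fibre, and Corollary~\ref{cor: pg>0 add fib 2} then finishes the proof.

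There is no serious obstacle here; the argument is short once the two ingredients are assembled. The only point demanding (minor) care is the bookkeeping of which fibre types contribute trivially to the Euler number, so as to ensure that the positivity of $e(S)$ genuinely forces an \emph{additive} fibre rather than merely a singular one of multiplicative type --- and it is precisely at this step that isotriviality enters essentially, via the exclusion of the $\I_n$ fibres.
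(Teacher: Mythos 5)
Your proposal is correct and follows exactly the paper's own argument: the paper's proof likewise consists of the single observation that isotriviality together with $\chi(S)>0$ forces a fibre of additive type (since the constant $j$-invariant excludes $\I_n$ fibres and the positive Euler number must be accounted for), followed by an appeal to Corollary~\ref{cor: pg>0 add fib 2}. Your write-up merely spells out the Euler-number bookkeeping that the paper leaves implicit.
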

\begin{proof}
Since $f$ is isotrivial and $\chi(S)>0$, there exists a fibre of additive type of $f$. Now apply Corollary~\ref{cor: pg>0 add fib 2}.
\end{proof}

Now we turn to the kernel $\Aut_{B, \QQ}(S)$ of the natural homomorphism $\Aut_\QQ(S)\rightarrow\Aut(B)$. 
As the examples in Section~\ref{s:pf2} illustrate, there is no general bound for $\Aut_{B, \QQ}(S)$. 
Nevertheless, if $S$ is not isotrivial, we can bound $\Aut_{B, \QQ}(S)$ in terms of the genus $g(B)$ of the base curve $B$
(or of other invariants of $S$); 
in contrast, the isotrivial case showcases a uniform bound independent of $g(B)$,
 see 
 Proposition~\ref{prop: pg=0 bd J(S)} and Corollary~\ref{cor: p_g>0 isotrivial}.

\begin{prop}\label{prop: bd AutB(S)}
Let $f\colon S\rightarrow B$ be a minimal properly elliptic surface with $\chi(S)>0$. 
 If $f$ is not isotrivial,
then 
\begin{equation}\label{eq: Aut(BQ) bd}
|\Aut_{B, \QQ}(S)|< 12\pi^2(q(S)+2).
\end{equation}
\end{prop}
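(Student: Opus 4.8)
The plan is to reduce the bound on $\Aut_{B,\QQ}(S)$ to the bound on the torsion of the Mordell--Weil group of the Jacobian $J(S)$ already established in Proposition~\ref{prop: bd MW}, treating the cases $p_g(S)>0$ and $p_g(S)=0$ separately. Throughout I set $g:=g(B)=q(S)$ and recall that $J(S)$ is again a jacobian elliptic surface over $B$ with $\chi(\sO_{J(S)})=\chi(S)>0$. I also note that $f$ is isotrivial if and only if $J(f)$ is: over $\CC$ every smooth fibre of $f$ carries a rational point and is thus isomorphic to its own Jacobian, so constancy of the $j$-invariant of $J(f)$ forces all smooth fibres of $f$ to be isomorphic, and conversely.

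First I would suppose $p_g(S)>0$. Then Lemma~\ref{lem: AutQ vs MW} gives the inclusion $\Aut_{B,\QQ}(S)\subseteq t(\MW(J(f))_\tor)$, and since the homomorphism $t$ of \eqref{eq: tMW} is injective, $|\Aut_{B,\QQ}(S)|\leq|\MW(J(S))_\tor|$. Applying Proposition~\ref{prop: bd MW} to $J(S)$ then yields $|\MW(J(S))_\tor|<12\pi^2(g+2)=12\pi^2(q(S)+2)$, which is exactly the asserted inequality \eqref{eq: Aut(BQ) bd}. Since $f$ is assumed non-isotrivial, this is the first, genus-dependent, alternative of Proposition~\ref{prop: bd MW}; I remark in passing that even the isotrivial bound $|\MW(J(S))_\tor|\leq 4$ would suffice, so this step does not actually depend on the non-isotriviality hypothesis.

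Next I would treat $p_g(S)=0$, where Lemma~\ref{lem: AutQ vs MW} is unavailable. Here $J(S)$ is rational and $g=q(S)=0$, so the target of the bound is $12\pi^2(q(S)+2)=24\pi^2>9$. Since $f$ is not isotrivial, neither is $J(f)$ by the equivalence above, so $J(S)$ is none of the isotrivial surfaces $X_{22},X_{33},X_{44},X_{11}(j)$. Consequently Proposition~\ref{prop: pg=0 bd J(S)} applies in either its semistable case~(1) or its additive case~(2), giving $|\Aut_{B,\QQ}(S)|\leq 9$ in the worst case. As $9<24\pi^2$, the inequality \eqref{eq: Aut(BQ) bd} follows.

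The substantive input is entirely contained in the two cited propositions --- the Mordell--Weil torsion bound of Section~\ref{s:MW} and the case analysis of Proposition~\ref{prop: pg=0 bd J(S)} --- so the argument above is essentially an assembly. The only genuine care needed is that the reduction $\Aut_{B,\QQ}(S)\hookrightarrow\MW(J(S))_\tor$ via $t$ requires $p_g(S)>0$, which forces the separate, and essentially elementary, treatment of the rational case $p_g(S)=0$; and that non-isotriviality of $f$ transfers to $J(f)$, so as to exclude the four exceptional surfaces arising in Proposition~\ref{prop: pg=0 bd J(S)}~(3). I expect these two bookkeeping points, rather than any new computation, to be where the care must be taken.
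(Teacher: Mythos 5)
Your proof is correct and follows essentially the same route as the paper's: the case $p_g(S)>0$ (equivalently, $J(S)$ not rational) is handled via the embedding $\Aut_{B,\QQ}(S)\hookrightarrow t(\MW(J(S))_\tor)$ from Lemma~\ref{lem: AutQ vs MW} together with Proposition~\ref{prop: bd MW}, and the case $p_g(S)=0$ via the uniform bound $|\Aut_{B,\QQ}(S)|\leq 9$ of Proposition~\ref{prop: pg=0 bd J(S)}. Your extra step transferring non-isotriviality from $f$ to $J(f)$ to exclude the four exceptional surfaces is valid but not actually needed, since part (3b) of Proposition~\ref{prop: pg=0 bd J(S)} already bounds $\Aut_{B,\QQ}(S)$ by $9$ in those cases as well.
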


\begin{proof}

\
If $J(S)$ is rational, then the order of $\Aut_{B,\QQ}(S)$ is at most $9$ by Proposition~\ref{prop: pg=0 bd J(S)}, 
and hence \eqref{eq: Aut(BQ) bd} holds in this case. 
If $J(S)$ is not rational, then $p_g(S)>0$, and we have $\Aut_{B,\QQ}(S)\subset t(\MW(J(S))_\tor)$ by Proposition~\ref{lem: AutQ vs MW}. 
It follows by Proposition~\ref{prop: bd MW} that
\[
|\Aut_{B, \QQ}(S)|\leq |\MW(J(S))_\tor| \leq 12\pi^2(g(B)+2)= 12\pi^2(q(S)+2)
\]
where, for the last equality, we used the fact that $g(B)=q(S)$, due to the assumption that $\chi(S)>0$.
\end{proof}

\begin{cor}\label{cor: AutB bd}
Let $f\colon S\rightarrow B$ be a minimal properly elliptic surface with $\chi(S)>0$ such that $J(S)$ is not rational. Then 
\begin{equation}\label{eq: AutB bd}
|\Aut_{\QQ}(S)|< 12\pi^2(q(S)+2).
\end{equation}
\end{cor}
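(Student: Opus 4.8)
The plan is to reduce the bound on the full group $\Aut_\QQ(S)$ to the bound on its fibre-preserving subgroup $\Aut_{B,\QQ}(S)$, which has already been controlled in Proposition~\ref{prop: bd AutB(S)}. The key preliminary observation is that the hypothesis that $J(S)$ is not rational is equivalent to $p_g(S)>0$, as recorded earlier in this section.

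First I would invoke Proposition~\ref{prop: trivial Aut_Q on B}: since $J(S)$ is not rational, the image group $\Aut_\QQ(S)|_B$ is trivial. Feeding this into the exact sequence \eqref{eq: exact sequence Aut_R} with $R=\QQ$ yields the identification
\[
\Aut_\QQ(S) = \Aut_{B,\QQ}(S),
\]
so that it suffices to bound $|\Aut_{B,\QQ}(S)|$.

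Next I would split according to whether $f$ is isotrivial. If $f$ is isotrivial, then, using $p_g(S)>0$, Corollary~\ref{cor: p_g>0 isotrivial} shows that $\Aut_\QQ(S)$ is already trivial, so the asserted strict inequality \eqref{eq: AutB bd} holds trivially because $12\pi^2(q(S)+2)>0$. If $f$ is not isotrivial, then Proposition~\ref{prop: bd AutB(S)} applies directly and gives $|\Aut_{B,\QQ}(S)|<12\pi^2(q(S)+2)$; combined with the identification above, this is exactly \eqref{eq: AutB bd}.

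The corollary is thus essentially a bookkeeping consequence of the three cited results, and there is no genuinely hard step remaining at this stage; the substantive work has already been done upstream. The arithmetic heart is Proposition~\ref{prop: bd MW}, whose non-isotrivial case bounds $|\MW(J(S))_\tor|$ by pulling the torsion section back from the modular elliptic surface $X_1(N)$ and invoking the genus estimate $g_1(N)>N^2/(12\pi^2)-2$ of Proposition~\ref{prop: lower bd g1N}, together with the fact (Lemma~\ref{lem: AutQ vs MW}) that in positive geometric genus $\Aut_{B,\QQ}(S)$ embeds into $\MW(J(S))_\tor$. The only point demanding care in assembling the corollary is that Proposition~\ref{prop: bd AutB(S)} is stated under the non-isotriviality hypothesis, so the isotrivial case must be disposed of separately via Corollary~\ref{cor: p_g>0 isotrivial}.
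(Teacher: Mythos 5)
Your proof is correct and follows essentially the same route as the paper: Proposition~\ref{prop: trivial Aut_Q on B} gives $\Aut_\QQ(S)=\Aut_{B,\QQ}(S)$ via the exact sequence, and Proposition~\ref{prop: bd AutB(S)} supplies the bound. Your separate disposal of the isotrivial case via Corollary~\ref{cor: p_g>0 isotrivial} is a sensible extra precaution, since Proposition~\ref{prop: bd AutB(S)} is stated only for non-isotrivial fibrations (a point the paper's one-line proof passes over), but it does not change the substance of the argument.
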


\begin{proof}

If $f$ is isotrivial, then $\Aut_\QQ(S)$ is trivial by Corollary \ref{cor: p_g>0 isotrivial}.
Hence we may assume that $f$ is not isotrivial.
Then Propositions~\ref{prop: trivial Aut_Q on B} and \ref{prop: bd AutB(S)} show that
\[
|\Aut_{\QQ}(S)|= |\Aut_{B, \QQ}(S)|< 12\pi^2(q(S)+2),
\]
using the  exact sequence \eqref{eq:es} restricted to $\Aut_\QQ(S)$.

\end{proof}

\begin{remark}
\label{rem:bounds}
The bound \eqref{eq: AutB bd}
from Corollary \ref{cor: AutB bd} also holds if $J(S)$ is rational, but 
different from the four exceptional surfaces from Proposition \ref{prop: pg=0 bd J(S)} {\it (3)}.
\end{remark}

\section{The main construction}
\label{s:constr}

In this section, we explain a method to construct elliptic surfaces 
$$f\colon S\rightarrow B$$
with  nontrivial $\Aut_\QQ(S)$ by applying a suitable base change construction
to a jacobian elliptic surface 
$$h\colon X\rightarrow B$$ 
endowed with a (finite) subgroup $G\subset \MW(X)_\tor$. 
As a preparation, we recall how the group structure of the fibration $h$
 extends over the smooth locus $F^\#$ of any the singular fibre $F$.
In detail, if $F$ is multiplicative (or semi-stable), i.e. of Kodaira type $\I_n \; (n>0)$,
then there is a short exact sequence
\begin{equation}\label{eq: ses F}
1\rightarrow \CC^* \rightarrow F^\# \rightarrow  \ZZ/n\ZZ \rightarrow 1
\end{equation}
which expresses the non-canonical isomorphism \eqref{eq:FFF} in a canonical way.
Here $\CC^*\cong\mathbb G_m$ is identified with the identity component of $F^\#$ 
(intersecting the zero section $O$), 
and $1\in \CC^*$  is identified with the intersection point $F^\#\cap O$, 
while $\ZZ/n\ZZ\cong A_F$ denotes the cyclic group of components of $F^\#$, 
with $0$ corresponding to the identity component $\Theta_0$.

\begin{defn}\label{defn: split}
Let $h\colon X\rightarrow B$ be a jacobian elliptic fibration,
and $G\subset \MW(X)_\tor$ a subgroup. 
Suppose $h^*b$ is a fibre of type $\I_n$ for some $n>0$. 
We say $G$ {\bf splits at} $f\colon=h^*b$ if, under the injection $\iota\colon G\rightarrow F^\#$ from \eqref{eq:MW->F},
we have
\[
\iota(G) \;
\cong
\; G_1 \times G_2,
\]
where $G_1 = \CC^*\cap \iota(G)$, and $G_2$ is the image of $\iota(G)$ in $\ZZ/n\ZZ$ under \eqref{eq: ses F}. 
\end{defn}

Note that $G$ splits at  $h^*b$ if and only if there are $P,\, P'\in G$ satisfying the following conditions:
\begin{enumerate} 
\item 
\label{item1}
$G=\langle P, \, P'\rangle$;
\item
\label{item2}
 $t_X(P)$ acts freely on  the set of components of the divisor $h^*b$
  \item
\label{item3}
 $P'$ and $O$ intersect $h^*b$ at the same component.
\end{enumerate}

For immediate use, 
we give sufficient conditions that a subgroup $G\subset \MW(X)$ splits at every multiplicative fibre.
Note that these are independent of the precise 
global structure
of $X$.

\begin{lem}
\label{lem:splitting}

Consider a finite group $G$ written   in the Frobenius-elementary divisors normal form,   
 $G\cong \ZZ/d\ZZ \times \ZZ/da\ZZ$.
 
 Then every injective homomorphism of $G$ to a group  as in \eqref{eq: ses F} splits if and only if 
 $a$ is square free.

In particular, let $h\colon X\rightarrow B$ be a jacobian elliptic fibration with zero section $O$, and $G\subset \MW(X)_\tor$ a subgroup:  if  $a$ is square free, then  $G$ splits at every multiplicative fibre.

A very particular case is the one with $G\cong(\ZZ/d\ZZ)^2$.
\end{lem}

\begin{proof}
The image of $G$ inside $\ZZ/n$ must be  isomorphic to $\ZZ/ da'$, where $ a = a' a''$.

Then the Kernel, $\subset \CC^*$, is cyclic, hence isomorphic to $\ZZ/ da''$.

By the  normal form theorem, $ G \cong \ZZ/ da' \oplus \ZZ/ da''$ if and only if $d = GCD (da' , da'')$,
this means that $a', a''$ are relatively prime. Since this must hold for each 
product decomposition $ a = a' a''$, this is equivalent to $a$  being square free.

\end{proof}

\begin{remark}
\label{rem:8211}
To illustrate that Lemma \ref{lem:splitting} is sharp,
we highlight the extremal rational elliptic surface $X_{8211}$ from \cite{MP86} (also described in \cite{Bea82}).
This has $\MW\cong\ZZ/4\ZZ$ not splitting at the fibre of type $\I_2$
as a generator $P$ meets $\I_2$ at the non-identity component.

Note, however, that the subgroup $\ZZ/2\ZZ\cong \langle 2P\rangle\subset\MW$ is splitting;
applied to a deformation of $X_{8211}$ preserving the 2-torsion section
while replacing the $\I_2$ fibre by two fibres of type $\I_1$, 
this will be used in Section \ref{s:2}.
\end{remark}

We can now introduce our key construction.
In practice, this results in an (algebraic!) logarithmic transformation which 
can be viewed as a generalization of Kond\=o's construction of Enriques surfaces 
starting from a rational elliptic surface with a $2$-torsion section (\cite{Kon86}, later generalized in \cite{HS11}).

Put differently, the construction realizes a homogenous space above $X$ explicitly
while retaining control about the action of $\Aut(X)$.

\begin{construction}\label{const: log transform}
\label{constr}
\begin{enumerate}[leftmargin=*]
\item Let $h\colon X\rightarrow B$ be a relatively minimal elliptic surface with 
$\chi(\sO_X)>0$, a  zero section $O$, and a fixed subgroup $G\subset \MW(X)_\tor$, 
 splitting at all semi-stable singular fibres  $h^*b_i,\,1\leq i\leq s_0$. 
Explicitly, at each singular fibre $h^*b$ of type $\I_{n}$ with $n\geq 1$, 
we have a splitting $G=\langle P_{b}\rangle \times \langle P_{b}'\rangle $, where $t_X(P_{b})$ induces a permutation of order $|\langle P_b\rangle|$ on  the components of $h^*b$, while $t_X(P_{b}')$ preserves each component of $h^*b$.

\item
Renumber the $b_i$ such that there is $s_1\leq s_0$ with $P_{b_i}\neq O$ if and only if $i\leq s_1$,
let $s\geq s_1$ and choose distinct points $b'_{s_1+1}, \hdots, b'_s\in B$  with smooth fibres $h^*b'_j$.
Set $\sB = \{b_1,\hdots,b_{s_1}, b'_{s_1+1},\hdots, b'_s\}$.
By the Riemann Existence Theorem, we can define a $G$-cover 
\[
\tB \stackrel{\bar\pi}\longrightarrow B
\]
with branch locus $\sB$ 
by choosing $P_{b_i}$
 as local monodromy around $b_i\in\sB_0 = \{b_1,\hdots,b_{s_1}\}$, 
complemented by suitable monodromies at 
$b'_{s_1+1},\hdots,b'_s$.

\item
Consider the corresponding base change
\begin{equation}
\begin{tikzcd}
\tX \arrow[r]\arrow[rd, "\tilde h"'] & X\times_B \arrow[r]\arrow[d]\tB  & X\arrow[d, "h"] \\
& \tB\arrow[r, "\bar\pi"] & B 
\end{tikzcd}
\end{equation}
where $\tX\rightarrow X\times_B \tB$ is the minimal resolution and  $\tilde h\colon\tX\rightarrow\tB$ is the induced elliptic fibration. 
Note that  $X\times_B\tB$ is normal, with singularities in the fibres above the $b_i \, (i=1,\hdots s_1)$
depending on the order $d_i$ of $P_{b_i}$.
In effect, 
over each node of a fibre of type $\I_{n_i}$ over $b_i$, there is an $A_{d_i-1}$-singularity,
so that $\tX$ acquires a fibre of type $\I_{n_id_i}$ over each pre-image of the  branch point $b_i$.
In addition, $\tilde h\colon \tX\rightarrow \tB$ is still relatively minimal.

\item
For $P\in G$, we denote by $\sigma_P\in \Aut(\tB)$ the corresponding deck transformation of $\bar\pi$. 
At the same time, $P$ induces an automorphism $t_X(P)\in \Aut_B(X)$ as in \eqref{eq: tMW}.
Hence the product group $G\times G$ acts on $X\times_B\tB$ in a natural way: for $(P,\, P') \in G\times G$ and $(x,\,\tilde b)\in X\times_B\tB$, 
\[
(P,\,P')(x,\,\tilde b) := (t_X(P)(x),\, \sigma_{P'}(\tilde b)).
\]
By the uniqueness of 
the minimal resolution, the action of $G\times G$ lifts to $\tX$, and $\tilde h$ is equivariant 
with respect to the $(G\times G)$-action on $\tX$ and the $G$-action on $\tB$. 
\end{enumerate}
\end{construction}

The construction culminates in the following result which will form the key
to constructing properly elliptic surfaces with non-trivial $\Aut_\QQ$ or $\Aut_\ZZ$.

\begin{theorem}
\label{thm:bc}
(1) The diagonal $\Delta_G\subset G\times G$ acts freely on $\tX$.

(2) 
The quotient $S = \tX / \Delta_G$ is a non-rational elliptic surface $S\stackrel f\longrightarrow B$ such that $J(S)=X$.
More precisely, $S$ is an Enriques surface if $p_g(X)=0, G=\ZZ/2\ZZ, s=2$, and properly elliptic otherwise.

\smallskip

(3) Moreover,
$S$ is endowed with a $G$-action which preserves each fibre of $f$ and  acts by translations on the smooth fibres. 
\end{theorem}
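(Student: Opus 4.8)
For part~(3) I may assume parts~(1) and~(2), so that $S=\tX/\Delta_G$ is a smooth properly elliptic surface and $\Delta_G$ acts freely on $\tX$. The plan is to obtain the $G$-action on $S$ as the residual action of $G\times G$ after dividing by the diagonal, and then to verify its two properties fibre by fibre. Since $G$ is abelian, $\Delta_G$ is a normal (indeed central) subgroup of $G\times G$, so the $(G\times G)$-action on $\tX$ from Construction~\ref{const: log transform}(4) descends to an action of $(G\times G)/\Delta_G$ on $S$. The composite $G\times\{e\}\hookrightarrow G\times G\twoheadrightarrow (G\times G)/\Delta_G$ is an isomorphism, because $(G\times\{e\})\cap\Delta_G=\{(e,e)\}$ while $(G\times\{e\})\cdot\Delta_G=G\times G$ (as $(P,Q)=(PQ^{-1},e)\cdot(Q,Q)$); this identifies the quotient with $G$ and gives the desired action, the element $P\in G$ acting through the class of $(P,e)$.

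First I would check fibrewise invariance. By Construction~\ref{const: log transform}(4) the $(G\times G)$-action on $\tB$ factors through the second projection and is given by the deck transformations $\sigma_{P'}$ of the $G$-cover $\bar\pi\colon\tB\to B$; since $\bar\pi\circ\sigma_{P'}=\bar\pi$, the group $G\times G$ acts trivially on $B$. As $f$ is the morphism induced by $\bar\pi\circ\tilde h$ upon passing to the quotient $\tX\to S$, the resulting $G$-action on $S$ covers $\mathrm{id}_B$ and therefore maps every fibre $f^{-1}(b)$ to itself.

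It remains to identify the action on a smooth fibre with a translation. I would take $b\in B$ general, so that $b$ lies outside the branch locus $\sB$; then $\bar\pi^{-1}(b)=\{\tilde b_1,\dots,\tilde b_{|G|}\}$ is a free $G$-orbit, the base change $X\times_B\tB\to X$ is étale over $h^{-1}(b)$ (no resolution is needed there), and each $\tilde h^{-1}(\tilde b_j)$ is a smooth elliptic curve isomorphic to $X_b=h^{-1}(b)$. Because $\Delta_G$ acts freely on $\tX$ and permutes the sheets $\tilde b_j$ simply transitively through $\sigma$, the restriction of the quotient map $\tX\to S$ to the single fibre $\tilde h^{-1}(\tilde b_1)$ is an isomorphism onto $F_b:=f^{-1}(b)$. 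The representative $(P,e)$ fixes $\tilde b_1$, since it acts on $\tB$ by $\sigma_e=\mathrm{id}$, and acts on $\tilde h^{-1}(\tilde b_1)$ through the restriction of $t_X(P)$, which by Lemma~\ref{lem: im(t) vs trans} is translation by $P$ on the elliptic curve $X_b$. Transporting this along the isomorphism $\tilde h^{-1}(\tilde b_1)\cong F_b$ shows that $P\in G$ acts on $F_b$ as translation by $P$; this is nonzero for $P\neq e$ because a nonzero torsion section meets $X_b$ in a nonzero point, so the $G$-action is faithful.

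The step I expect to require the most care is this last identification: one must confirm that representing $F_b$ by a single sheet $\tilde h^{-1}(\tilde b_1)$ is compatible with the residual $G$-action, i.e.\ that changing the sheet alters the identification only by a translation already contained in $\Delta_G$. Since the fibre group is abelian and $\Delta_G$ acts by the restrictions of $t_X(Q)$ on fibres, this ambiguity commutes with translation by $P$, so the description ``acts as translation by $P$'' is well defined on $S$ itself and not merely on the cover. The remaining verifications---that the descended map is genuinely an action of $G$ and that $f$ is the claimed quotient fibration---are bookkeeping in the abelian group $G\times G$, using that $\tilde h$ is $(G\times G)$-equivariant over the $G$-action on $\tB$ with $\tB/G=B$.
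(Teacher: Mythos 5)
Your argument for part~(3) is correct and is essentially the paper's own: the paper likewise uses the isomorphism $G\cong (G\times G)/\Delta_G$, $P\mapsto (P,O)\bmod\Delta_G$, notes that the $(G\times G)$-action commutes with $\Delta_G$ and hence descends to $S$, covers the identity on $B$, and restricts to translation by $P$ on smooth fibres. Your extra care about the sheet-independence of the identification $\tilde h^{-1}(\tilde b_1)\cong F_b$ is sound, if more than the paper bothers to write down.

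The genuine gap is that you prove only one third of the statement. You declare at the outset that you ``may assume parts~(1) and~(2),'' but those are precisely the substantive claims of the theorem, and neither is routine. Part~(1) is where the splitting hypothesis on $G\subset\MW(X)_\tor$ enters: on the ramified smooth fibres $\Delta_G$ acts by nontrivial translations, hence freely, but at a ramified semi-stable fibre one must check that the stabilizer element $(P,P)$ with $P\in G_b$ acts without fixed points on the $\I_{nd}$ fibre of $\tX$; the paper does this by observing that $t_X(P)$ rotates the components of the $\I_n$ fibre of $X$ by $2\pi/d$ (because $P$ meets a non-identity component, by the splitting), so the lifted action rotates the components of the resolved $\I_{nd}$ fibre and is fixed-point free. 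Without this, $S$ need not even be smooth. Part~(2) requires identifying the multiple fibres of $f$ introduced at the branch points, invoking the canonical bundle formula \eqref{eq: canonical bundle formula} to conclude that $S$ is properly elliptic (or Enriques in the stated exceptional case), and producing the multisection of index $|G|$ that shows $J(S)=X$. None of this appears in your proposal, so as a proof of the theorem as stated it is incomplete.
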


\begin{proof}
(1) To confirm the free action of $\Delta_G$, it suffices to analyse the stabilizers $G_{b_i}$
at the semi-stable fibres which are the preimages of the $b\in\sB_0$,
since on the smooth ramified fibres, $\Delta_G$ acts by translations, hence without fixed points.

Letting $g=(P,P)\in\Delta_G$ for $O\neq P\in G_{b}$, 
we infer from the discussion preceding the theorem
 that $g$ rotates the fibre components of an $\I_{n}$ fibre of $X\times_B \tB$ by $2\pi/d$
in the notation of Construction \ref{constr} (3)
(because $t_P$ does so on $X$).
It follows that the same holds for the $\I_{nd}$ fibre of $\tX$.
Hence $\Delta_G$ acts freely everywhere.

(2) Since $\Delta_G$ respects the elliptic fibration on $\tX$,
$S$ is elliptic. 
More precisely, it inherits fibres of multiplicity $d_i$ at each branch point $b_i\in\sB_0$, and similar at $\sB\setminus\sB_0$,
depending on the chosen monodromy.
It then follows from the canonical bundle formula \eqref{eq: canonical bundle formula} 
that $S$ is properly elliptic -- or Enriques in the given special case.
Due to the multiple fibres, the fibration $f$ admits no sections,
but $P$ maps to a multisection of index $|G|$.
This already shows that $J(S)=X$.

(3) The $G$-action then follows generally (regardless of the given construction) from the Jacobian since $G\subset\MW(J(S))$.
Explicitly we can exhibit it using the group isomorphism 
\[
G\cong (G\times G)/\Delta_G,\quad P\mapsto (P,\, O)\mod 
\Delta_G
\]
which endows $S$ with a $G$-action via the $(G\times G)$-action on $\tX$, 
which commutes with the $\Delta_G$-action, thus descends to $S$,
preserves each fibre of $\tX$ (and thus of $f$) and acts by translations on the smooth fibres. 
\end{proof}

\begin{rmk}
Elliptic surfaces with given Mordell--Weil groups can  be obtained from classifications 
of rational elliptic surfaces 
(see \cite{OS90}, 
 \cite[Table 8.2]{SS19})
 or elliptic K3 surfaces (see \cite{Shimada}).
 Of course, the Mordell--Weil group can also be read off from explicit equations, 
 or inferred from universal constructions (cf.\ \cite{Kubert})
 as we shall exploit in Section \ref{s:pf1-ii}.
\end{rmk}

In order to detect numerical trivial automorphisms of $S$, 
we first determine the generators of $\Num(S)_\QQ$.

\begin{lem}\label{lem: Num generators}
In the notation of Construction~\ref{const: log transform}, let $\varpi: \tX \to S$ denote the quotient map.
Suppose that $Q_1,\dots, Q_m\in \MW(X)$ generate the $\QQ$-vector space $\MW(X)_\QQ$. For $P\in \MW(X)$, let 
$\tP:=\pi^{-1}(P)\subset \tX$ be the inverse image of $P$ and $P_S := \varpi(\tP)\subset S$ the image of $\tP$. Then the $\QQ$-vector space $\Num(X)_\QQ$ is generated by the classes of $O_S$, $(Q_j)_S$, $1\leq j\leq m$, together with the classes of the fibre components of $f\colon S\rightarrow B$. In particular, if $\MW(X)$ is finite, then $\Num(S)_\QQ$ is generated by the class of $O_S$ together with classes of the fibre components.
\end{lem}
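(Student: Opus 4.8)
The plan is to exploit the fact, recorded just before this lemma, that $S$ and its Jacobian $X=J(S)$ share the Picard number and that the Shioda--Tate formula \eqref{eq:ST} holds for $S$ with $\MW(S)$ replaced by $\MW(X)$, so that
\[
\dim_\QQ\Num(S)_\QQ=\rho(S)=2+\rank\MW(X)+\sum_{v\in B}(m_v-1),
\]
the last sum running over the numbers of components of the (reduced) fibres of $f$, which agree with those of $X$. First I would check that the fibre components of $f$ span a subspace $V$ of dimension $1+\sum_v(m_v-1)$: the general fibre $F$ gives one class, each reducible fibre contributes $m_v-1$ further independent classes, and the only relations are that every full fibre is numerically equivalent to $F$; this is the standard description of the vertical part. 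Since $O_S$ is a genuine multisection it is not vertical, so $\QQ\,O_S\oplus V$ has dimension $2+\sum_v(m_v-1)$.

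Next I would introduce the restriction-to-the-generic-fibre homomorphism. Writing $\sC$ for the generic fibre of $f$ over $\sK=\CC(B)$ and $\sE=\Jac(\sC)$, one has $\MW(X)=\sE(\sK)=\Pic^0(\sC)(\sK)$, and normalizing against $O_S$ one obtains a well-defined map
\[
\psi\colon \Num(S)_\QQ\longrightarrow \MW(X)_\QQ,\qquad D\mapsto \Big[D|_{\sC}-\tfrac{\deg(D|_{\sC})}{\deg(O_S|_{\sC})}\,O_S|_{\sC}\Big].
\]
By construction $\psi(O_S)=0$, and every fibre component restricts to $0$ on $\sC$, so $\QQ\,O_S\oplus V\subseteq\ker\psi$; as $\psi$ is surjective (again by the shared Shioda--Tate structure), $\dim\ker\psi=\rho(S)-\rank\MW(X)=2+\sum_v(m_v-1)$, forcing $\ker\psi=\QQ\,O_S\oplus V$. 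It then remains to compute $\psi$ on the multisections $(Q_j)_S$. Since $(Q_j)_S=\varpi(\pi^{-1}(Q_j))$ is, by the very construction and by $J(S)=X$ (Theorem~\ref{thm:bc}), the image of the section $Q_j$ of $X$, its restriction to $\sC$ represents a rational multiple $c_j[Q_j]$ of the point $Q_j\in\MW(X)$, i.e.\ $\psi((Q_j)_S)=c_j[Q_j]$ with $c_j\neq0$ whenever $[Q_j]\neq0$ in $\MW(X)_\QQ$. As the $Q_j$ generate $\MW(X)_\QQ$, the classes $\psi((Q_j)_S)$ generate the image of $\psi$; combined with $\ker\psi=\QQ\,O_S\oplus V$ this yields $\Num(S)_\QQ=\QQ\,O_S+V+\sum_j\QQ\,(Q_j)_S$, which is the assertion. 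The final statement is the special case $\MW(X)_\QQ=0$, in which $\psi=0$ and $\Num(S)_\QQ=\QQ\,O_S\oplus V$.

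The hard part will be making the map $\psi$ and the computation of $\psi((Q_j)_S)$ rigorous, since $S$ carries no section and $\sC$ is only a torsor under $\sE$: one must verify that $(Q_j)_S$ really specializes to a nonzero multiple of $Q_j$ rather than collapsing into the trivial lattice. Here I would use the $(G\times G)$-equivariance of Construction~\ref{const: log transform} and the compatibility of $J(S)=X$ with the quotient map $\varpi$ to track the section $Q_j$ through the base change and the free quotient. An alternative, perhaps cleaner, route is to avoid $\psi$ altogether: since $\Delta_G$ acts freely (Theorem~\ref{thm:bc}(1)), $\varpi\colon\tX\to S$ is \'etale of degree $|G|$, so $\varpi_*\colon\Num(\tX)_\QQ\to\Num(S)_\QQ$ is surjective; one then applies Shioda--Tate to the jacobian surface $\tX$ and pushes the generators forward, the remaining difficulty being to show that the push-forward of every section of $\tX$ lands in the span of $O_S$, the $(Q_j)_S$ and the fibre components, which again reduces to controlling $\MW(\tX)$ modulo the $G$-action.
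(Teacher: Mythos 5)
Your main argument is correct, but it runs along a genuinely different line from the paper's. The paper's proof is a two-line dimension count \emph{upstairs}: it pulls everything back to $\tX$ via $\varpi^*$ (which is injective on $\Num(S)_\QQ$ since $\varpi$ is finite), observes that the span of $\varpi^*O_S=\sum_{R\in G}\pi^*R$, $\varpi^*(Q_j)_S=\sum_{R\in G}\pi^*(Q_j+R)$ and the pulled-back fibre components coincides with $\pi^*\Num(X)_\QQ$ (the torsion translates $Q_j+R$ being numerically congruent to $Q_j$ modulo $\Triv(X)_\QQ$), and concludes from the equality of Picard numbers $\rho(S)=\rho(J(S))=\rho(X)$ recorded in Subsection \ref{ss:MW}. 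Your first route instead works \emph{downstairs} on $S$, via restriction to the generic fibre $\sC$; it is sound, and the step you flag as the hard part does go through: over $\sK'=\CC(\tB)$ the multisection $(Q_j)_S$ splits into the $|G|$ translates $Q_j+R$, $R\in G$, so Abel--Jacobi sends $(Q_j)_S|_{\sC}-O_S|_{\sC}$ to $\sum_R(Q_j+R)-\sum_R R=|G|\,Q_j\in\sE$, and this class descends to $\sK$ since it is already $\sK$-rational; hence $c_j=|G|\neq 0$. (A small reorganization is advisable: rather than asserting surjectivity of $\psi$ up front, establish $\ker\psi=\QQ O_S\oplus V$ directly -- a class killed by $\psi$ has a multiple whose restriction to $\sC$ is principal, hence is vertical up to $O_S$ -- and then the dimension count plus $\psi((Q_j)_S)=|G|[Q_j]$ closes the argument.) What each approach buys: the paper's version is shorter and never touches the torsor $\sC$, while yours makes transparent why the $(Q_j)_S$ account exactly for the Mordell--Weil rank and avoids having to track the extra fibre components created by the resolution $\tX\to X\times_B\tB$. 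Your alternative second route is the one closest to the paper, but your use of the pushforward $\varpi_*$ is strictly worse than the paper's pullback: surjectivity of $\varpi_*$ forces you to control all of $\Num(\tX)_\QQ$, in particular $\MW(\tX)$, which the base change may enlarge well beyond $\MW(X)$; the paper's $\varpi^*$ only needs the subspace $\pi^*\Num(X)_\QQ$ and so sidesteps this entirely.
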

\begin{proof}
The pullbacks of $O_S,\, (Q_j)_S,\, 1\leq k\leq m$, and of the fibre components of $f\colon S\rightarrow B$ to $\tX$ generate the same $\QQ$-vector space as $\pi^* \Num(X)_\QQ$, which has the same dimension as $\Num(S)_\QQ$, hence the assertion follows.
\end{proof}

\begin{proposition}
\label{prop:G=triv}
Suppose that the group $G$ is nontrivial in Construction~\ref{const: log transform}. Then
$G$ acts trivially on the cohomology ring $H^*(S, \QQ)$ if and only if there are no additive reducible fibres.
In particular, this holds true if $|G|>4$.
\end{proposition}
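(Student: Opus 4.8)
The plan is to separate the cohomology of $S$ degree by degree and reduce everything to the action on $\Num(S)_\QQ$. Since each nontrivial $g\in G$ acts on $S$ as a fibrewise translation $t_S(P)$ for some torsion section $O\neq P\in G\subset\MW(X)_\tor$ (Theorem~\ref{thm:bc}(3)), it fixes $B$ pointwise, so it acts trivially on $H^0$, $H^4$, and on $H^1(S,\QQ)=f^*H^1(B,\QQ)$ together with its Poincar\'e dual $H^3$. Being a translation, $g$ lies in $\Aut_\sO(S)$, hence acts trivially on $H^{2,0}\oplus H^{0,2}$ and therefore on the transcendental part $T(S)_\QQ\subset H^2(S,\QQ)$, the smallest rational Hodge substructure containing $H^{2,0}$; this is the same mechanism used in Lemma~\ref{lem: AutQ vs MW}. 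Thus $G$ acts trivially on $H^*(S,\QQ)$ if and only if it acts trivially on $\Num(S)_\QQ$.

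Next I would pin down the action on $\Num(S)_\QQ$ using the explicit generators from Lemma~\ref{lem: Num generators}: the class $O_S$, the classes $(Q_j)_S$ coming from a $\QQ$-basis of $\MW(X)_\QQ$, and the classes of the fibre components. Since $t_S(P)$ carries $O_S$ to $P_S$ and $(Q_j)_S$ to $(Q_j+P)_S$, the crucial point is that, $P$ being torsion, the Shioda height homomorphism kills $P$; hence the differences $[P_S]-[O_S]$ and $[(Q_j+P)_S]-[(Q_j)_S]$ lie in the rational trivial lattice $\Triv(S)_\QQ$ spanned by $O_S$, the general fibre $F$, and the fibre components. Consequently $G$ preserves $\Triv(S)_\QQ$ and acts trivially on the quotient $\Num(S)_\QQ/\Triv(S)_\QQ$, so the whole question collapses to whether $G$ acts trivially on $\Triv(S)_\QQ$. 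As $G$ fixes $F$ and permutes the fibre components, and as the fibre-component coefficient of $[P_S]-[O_S]$ at a reducible fibre $F_v$ is nonzero exactly when $P$ meets a component different from the one met by $O$, I would reduce to the statement: \emph{$G$ acts trivially on $\Num(S)_\QQ$ if and only if, at every reducible fibre $F_v$, the image of $G$ in the component group $A_{F_v}$ is trivial} (the remaining $F$-coefficient then vanishes by comparing self-intersections and intersections with $F$).

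With this reduction in hand, the two directions follow from the dichotomy between additive and multiplicative fibres. At an additive reducible fibre, \eqref{eq:FF} gives an injection $\MW(X)_\tor\hookrightarrow A_{F_v}$, so a nontrivial $G$ has nontrivial image there and permutes components nontrivially (Fact~\ref{fact:MW-add}); this furnishes the ``only if'' direction and also the final clause, since every additive component group has order at most $4$, whence an injection of $G$ with $|G|>4$ is impossible and no additive reducible fibre can occur (here the types $\II,\II^*$ are already excluded because they force $\MW_\tor=0$, contradicting $G\neq 0$). For the ``if'' direction it remains to treat the multiplicative fibres of $S$, and this is the step I expect to be the main obstacle: one must show that the residual $G$-action coming from the factor $G\times\{O\}$ preserves each component of every $\I_k$ fibre, \emph{including} the multiple fibres produced over the branch points. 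Here the splitting hypothesis of Construction~\ref{constr} is essential: at an unbranched semi-stable fibre the generator $P_b$ is trivial and $G=\langle P_b'\rangle$ preserves components by Definition~\ref{defn: split}, while at a branched fibre the component rotation is entirely absorbed into the diagonal $\Delta_G$, which acts freely by Theorem~\ref{thm:bc}(1), so that modulo $\Delta_G$ the class $(P,O)$ acts trivially on the components of the resulting multiple fibre. I would make this last point rigorous through the local normal-crossing analysis of Lemma~\ref{lem: mIk}, comparing the rotation induced by $t_X(P_b)$ with that of the base-change deck transformation on the $A_{d-1}$-resolution, in the notation of \eqref{eq:dd'}. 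Granting this, $G\to A_{F_v}$ is trivial at every multiplicative fibre, which together with the additive case completes the equivalence.
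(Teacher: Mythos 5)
Your argument is correct and follows essentially the same strategy as the paper's proof: reduce to the action on $\Num(S)_\QQ$ via the transcendental part (translations lie in $\Aut_\sO$) and $f^*H^1(B,\QQ)$, take the generators of Lemma~\ref{lem: Num generators}, and settle the action on fibre components through the additive/multiplicative dichotomy, with the branched multiple fibres handled by observing that the component rotation of $t_X(P_b)$ is absorbed into $\Delta_G$ (the paper simply asserts this ``by construction''; your plan to justify it via the deck transformation acting trivially on the $A_{d-1}$-resolution is the right way to make it precise). The one place you genuinely diverge is the treatment of the multisection classes, and there the paper's route is both shorter and safer: it notes that $Q_S=\varpi\bigl(\bigcup_{R\in G}(\tQ+\tR)\bigr)$ is the image of a full $\Delta_G$-orbit, hence is \emph{literally} fixed by $t_S(P)$ --- in particular $P_S=O_S$ and $(Q_j+P)_S=(Q_j)_S$ as curves, so all the differences you feed into the height pairing vanish identically. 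Your version carries a latent pitfall: if one computes the fibre-component coefficients of $[P_S]-[O_S]$ by transporting the Shioda formula from $X$, one finds a nonzero contribution at each branched fibre (where $P_b$ meets a non-identity component of $h^*b$), which would appear to obstruct the ``if'' direction; the resolution is precisely that these multisections coincide on $S$, so the only surviving question is whether $t_S(P)$ permutes the components of the fibres of $S$ themselves --- which your final paragraph answers correctly. I would also record explicitly, as the paper does, that additive fibres are never in the branch locus, so the nontrivial permutation of their simple components on $X$ descends unchanged to $S$ for the ``only if'' direction.
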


\begin{remark}\label{rmk: free?}
Construction \ref{const: log transform} is quite a bit more flexible in the sense
that the base change only requires stabilizer subgroups $H\subset G_b$  at the semi-stable fibres 
to yield a free $G$-action 
 (cf.\ the pointer in Remark \ref{rem:8211}).
However, the action will be numerically trivial if and only if $H= G_b$ at each $b\in\sB_0$,
for otherwise  $t_S(P)$ does not preserve each component of the respective fibre for some $P\in G$
(as will be visible in the following proof).

\end{remark}

\begin{proof}[Proof of Proposition \ref{prop:G=triv}]
Recall that the presence of an additive fibre implies that $|\MW_\tor|\leq 4$ 
by \eqref{eq:FF}, hence the condition in the second statement.
Indeed, for a section $P\in\MW_\tor$,
$t_X(P)$ permutes the simple  components of any additive fibre non-trivially by Fact \ref{fact}.
Since there is no ramification at the additive fibres, the same holds on $\tX$ and on $S$.

Conversely, the construction, and the condition on the additive fibres, ensures that any $P\in G$ intersects the fibre $h^*b$ at the same component as $O$ for each 
$b\in B\setminus\sB_0$. Hence $t_S(P)$ preserves each component of the fibre $f^*b$ for $b\in B\setminus \sB_0$. 
For $b\in \sB_0$, the same holds by construction. 
By Lemma~\ref{lem: Num generators}, 
in order to show that $G$ acts trivially on $\Num(S)$, it suffices to check that, for any $P\in G$ and $Q\in \MW(X)$, one has
\[
t_S(P)(Q_S)= Q_S
\]
where $Q_S:=\varpi(\pi^{-1}(Q))$. But this also follows from the construction: Using the notation of Lemma~\ref{lem: Num generators}, we have
\[
Q_S=\varpi(\tQ) = \varpi\left(\bigcup_{R\in G} (\tQ+\tR)\right)
\]
where $\tQ+\tR$ denotes the addition in $\MW(\tX)$, and hence
\begin{multline*}
 t_S(P)(Q_S) =t_S(P)\circ\varpi(\tQ) =t_S(P) \circ \varpi\left(\bigcup_{R\in G} (\tQ+\tR)\right) \\  
 =\varpi\left(\bigcup_{R\in G} (\tQ+\tR+\tP)\right)= \varpi\left(\bigcup_{R\in G} (\tQ+\tR)\right) = Q_S.
\end{multline*}
Since $t_S(P)|_F$ is a translation for a smooth fibre $F$ of $f$, one sees that $t_S(P)$ acts trivially on the transcendental part of $H^2(S, \QQ)$. In conclusion, $G$ acts trivially on the whole $H^2(S, \QQ)$. 

Since $\chi(S)>0$, the pullback $f^*\colon H^1(B, \QQ)\rightarrow H^1(S, \QQ)$ is an isomorphism. Now that $G$ induces the trivial action on $B$, it acts trivially on $H^1(S, \QQ)$. The proof of the numerical triviality of $G$ is thus complete.
\end{proof}

\begin{remark}
Alternatively, in the last steps of the proof of Proposition \ref{prop:G=triv}, one could also argue directly with the quotient surface $S/G$
to deduce that $q(S/G)=q(S)$ and $p_g(S/G)=p_g(S)$, much like for an isogeny of a jacobian elliptic surface.
\end{remark}

\begin{remark}
We emphasize that Proposition \ref{prop:G=triv} fits well with Theorem \ref{thm} (iii)
as, by construction, the fibration $S\to B$ has multiple fibres of type $\phantom{}_m\I_n$ for suitable $m,n>1$.
\end{remark}

\section{Numerically trivial automorphisms -- Proof of Theorems \ref{thm:p_g>0}, \ref{thm2}}
\label{s:pf1}

We are now in the position to prove the main results concerning  the numerically trivial automorphism group $\Aut_\QQ(S)$ when $p_g(S)>0$.

\subsection{Proof of Theorem \ref{thm:p_g>0} (i)}

If $p_g(S)>0$, then $\Aut_\QQ(S)|_B$ is trivial by Proposition \ref{prop: trivial Aut_Q on B}.
Hence $\Aut_\QQ(S) = \Aut_{B,\QQ}(S)$ by the exact sequence \eqref{eq:es}, restricted to $\Aut_\QQ(S)$, and Proposition \ref{lem: AutQ vs MW} shows that
\begin{eqnarray}
\label{eq:Aut-MW}
\Aut_\QQ(S) = \Aut_{B,\QQ}(S) \hookrightarrow t(\MW(J(S))_\tor)
\end{eqnarray}
(as in the proofs of Proposition \ref{prop: bd AutB(S)}
and Corollary \ref{cor: AutB bd}).
Since the rightmost group in \eqref{eq:Aut-MW} is abelian with at most 2 generators, claim (i) follows.
\qed

\subsection{Proof of Theorem \ref{thm:p_g>0} (ii)}
\label{s:pf1-ii}

Let $M$ denote the maximal order of an element in $G$ or $M=4$ if $G\subset(\ZZ/2\ZZ)^2$.
Thanks to Theorem \ref{thm:bc},
proving the first statement basically amounts to 
applying Construction \ref{constr} to the universal elliptic curve $X=X(M)$
and $G= (\ZZ/M\ZZ)^2$
(similar to what we did in the proof of Proposition \ref{prop: bd MW}; note that $G$ is splitting by Lemma \ref{lem:splitting}).
By Proposition \ref{prop:G=triv}, we get
an abundance of properly elliptic surfaces $S$ with $\Aut_\QQ(S)\supseteq (\ZZ/M\ZZ)^2\supseteq G$.

For the second statement, we need to prove that actually equality persists in the above construction:
$\Aut_\QQ(S)= (\ZZ/M\ZZ)^2$.
More generally, by Lemma \ref{lem:splitting}, 
we can also allow $G$ exactly as in Theorem \ref{thm} (ii).
Then there is a universal elliptic curve $X(G)$ without additive fibres as soon as $|G|>4$; 
some explicit equations can be found in \cite{Kubert}.
By definition (or by inspection of the invariants from Section \ref{s:pf1-iii}
which, of course, go up with the group growing), 
these surfaces have $\MW(X(G))=G$,
so \eqref{eq:Aut-MW} implies that $\Aut_\QQ(S)=\MW(X(G))=G$
 as soon as $p_g(X(G))>0$;
for instance, this holds whenever $|G|>9$.
(Note that, $X(G)$ is extremal over $\CC$, hence $\MW(X(G))$ is finite,  by \cite[Thm 5.1]{Shi72}.)

Whenever $|G|>4$ and $p_g(X(G))=0$,
i.e.\ $X(G)$ exists and is rational,
we apply a quadratic base change unramified at the singular fibres
to obtain a semi-stable elliptic K3 surface $X'$ with same $\MW_\tor$.
Applying Construction \ref{constr} to $X'$, 
we can use \eqref{eq:Aut-MW} to deduce the claim.

Alternatively, also for $|G|\leq 4$, 
we can pick a semi-stable elliptic K3 surface with the given Mordell--Weil group
from \cite{Shimada} and proceed as above.
\qed

\subsection{Proof of Theorem \ref{thm:p_g>0} (iii)}

For a jacobian elliptic surface, or if there is an additive fibre, this follows from Corollary 
\ref{cor: pg>0 add fib 2}.
Hence we may assume that all fibres are semi-stable or multiple of a smooth elliptic curve.
Using Bertini's theorem, we find a smooth irreducible multisection $D\subset S$ 
such that (quite generally)
\begin{itemize}
\item
$D$ does not pass through the singular points of $F'$ for each fibre $F$;
\item
$D$
is transversal to all the components of the singular fibres;
\item
$D$
is either transversal to a smooth fibre $F$, or simply tangent at one point at most.
\end{itemize}

Using the assumption that no multiple fibre has singular support,
the above implies that the map  $D\to B$
is unramified at the singular fibres with singular support,
 and only ramified over points whose corresponding fibre  is a smooth  multiple fibre; it  follows that
the normalization $X$ of 
 $S\times_B D$ is a smooth elliptic surface 
which, by construction, admits a section.
Since the Jacobian commutes with base change (cf.\ \cite[\S 8]{Milne}, e.g.),
we infer that $X$ is at the same time the base change of $J(S)$, i.e.\ $X=\Jac(S)\times_B D$.

Let $\varphi\in\Aut_\QQ(S)$ be a non-trivial automorphism.
By \eqref{eq:Aut-MW}, there is $P\in\MW(J(S))_\tor$ such that $\varphi=t(P)$.
 We take  the natural  lift 
of $\varphi$ to $X$ given by the fibre product of  $\varphi$ with the identity of $D$
and denote it by $\psi$, it is immediate that 
$\psi$ is the same as translation by $\tilde P$, the section of $X\to D$ induced by $P$.
But $P$ and $\tilde P$ meet some reducible fibre (of type $\I_n, n>1$) in a non-identity component by \eqref{eq:pattern},
so translation permutes fibre components and thus cannot be numerically trivial on $J(S)$ or $X$.
Since the fibre is unramified, the same applies to $S$, giving the desired contradiction.
\qed

\subsection{Proof of Theorem \ref{thm:p_g>0} (iv)}

This follows from Corollary 
\ref{cor: p_g>0 isotrivial}.

\subsection{Proof of Theorem \ref{thm: bound pg>0} (i)}
\label{s:pf1-v}

The inclusion \eqref{eq:Aut-MW} lets us directly infer (i) from Proposition \ref{prop: bd MW}
since the base curves of the elliptic fibrations on $S$ and on $J(S)$ are the same.
\qed

\subsection{Proof of Theorem \ref{thm: bound pg>0} (ii) and (iii)}
\label{s:pf1-iii}

The bounds follow along the same lines as in the proof of Proposition \ref{prop: bd MW}.
Namely, with $G\subset\MW(J(S))$ of size $|G|>4$, 
$J(S)$ is a base change of $X(G)$,
and since $S$ and $J(S)$ share the same invariants as discussed in Subsection \ref{ss:MW},
we infer that
\[
q(X(G))\leq q(S), \;\;
p_g(X(G))\leq p_g(S),\;\;
\chi(\sO_{X(G)}) \mid \chi(S) \;\; \text{ and } \;\; e(X(G))\mid e(S).
\]
Here the last two divisibilities form an easy topological consequence of the Euler number formula \eqref{eq: e(S)},
since all singular fibres are semi-stable (cf.\ Proposition \ref{prop:G=triv}).
The invariants for the given groups $G$ can be read off from
\cite[\S 4.2]{Miy06} and
 \cite{Shi72}, for instance.

For $p=2,3,5$, we start again with a semi-stable rational elliptic surface $X_0$ with $\MW_\tor\cong \ZZ/p\ZZ$,
namely $\# 44, \# 63$ and $\# 67$ from \cite[Table 8.2]{SS19}.
Applying a cyclic base change of degree $d>1$ ramified at the reducible fibre(s) and, in case there is only one reducible fibre,
at a smooth fibre, we obtain an auxiliary elliptic surface $X$ with $\MW(X)\supset\ZZ/p\ZZ$ and with $\chi(\sO)=p_g(S)+1=d$.
Applying Construction \ref{const: log transform} to $X$ gives the claim.
\qed

\section{Numerically trivial automorphisms -- Proof of Theorem \ref{thm:p_g=0}}
\label{s:pf2}
\label{s:iso}

Most of Theorem \ref{thm:p_g=0} has already been shown.
Indeed, in the non-isotrivial case,
the inequality $|\Aut_\QQ(S)|\leq 9$ of
Theorem \ref{thm:p_g=0} (i) follows from Proposition \ref{prop: pg=0 bd J(S)} (1) and (2),
and same for the other conclusions.  

Conversely, 
by Lemma~\ref{lem:splitting}, we may apply Construction~\ref{const: log transform} to $X=X_{3333}$ and $G=\MW(X)\cong(\ZZ/3\ZZ)^2$, to obtain a properly elliptic surface $S$ with $\Aut_\QQ(S)=\MW(X) = (\ZZ/3\ZZ)^2$, and hence the equality can actually be  attained.

Therefore we focus on the isotrivial case, 
where
the first bound from Theorem \ref{thm:p_g=0} (ii) $|\Aut_\Q(S)|_B|\leq s \leq P_2(S)+1$ 
is proven in Proposition  \ref{prop: pg=0 bd J(S)} (3) (a) 
for  the four special surfaces.

To extend this estimate  for all isotrivial fibrations, we note that the upper bound $|\Aut_\QQ(S)|_B|\leq 3$
for the non-special surfaces from Proposition  \ref{prop: pg=0 bd J(S)} (2)
is an equality only for  non-isotrivial fibrations by inspection of \cite[Table 3]{DM22}.
Hence $|\Aut_\QQ(S)|_B|\leq 2\leq s$ for all non-special isotrivial fibrations as required.

For these, the second bound from Theorem \ref{thm:p_g=0} (ii)
follows by 
combining the bounds for  $|\Aut_\QQ(S)|_B|$ above and for $|\Aut_{B,\QQ}(S)|$ in Proposition \ref{prop: pg=0 bd J(S)} (2)
using  \eqref{eq:es}.
The special surfaces, particularly $X_{22}$ and $X_{33}$, however, require substantially more work.

\subsection{Galois covers for isotrivial fibrations}

In this section, we assume as usual that 
$S$ is a minimal algebraic properly elliptic surface with
$\chi(S) >0$ and 
 that the pluricanonical fibration $f\colon S \ra B$
is isotrivial.

This means that  all the smooth fibres are isomorphic to each other;
it also implies that there exists a $G$-Galois covering $ p : C \ra B= C/G$ such that the pull-back $ C \times_B S $ is birational to
a product 
$C\times E$, compatibly with the respective projections onto $C$ (see \cite{serrano}, Section 2, and also the arguments in 1.5.3.1 and 1.5.3.2 of \cite{time}).

\begin{remark}
$G$ acts on $C$ and on $E$, since $f$ is a holomorphic bundle with fibre $E$ on a Zariski open set $B^*$ of $B$,  and the covering  $ p : C \ra B= C/G$  of the base corresponds to the kernel
of the monodromy with values in $\Aut(E)$. 

$G$ acts faithfully on $C$ by construction, while we may assume that $G$ also acts faithfully on $E$, otherwise 
we can take $G ' : = \Ker ( G \ra\Aut(E))$ and replace $C $ by $C / G'$.

Observe that $C$ has genus $g  \geq 2$ since $S$ has Kodaira dimension $ 1$.

$G$ does not act freely on $C\times E$, otherwise we would have $\chi(S)=0$ and be  in the situation considered in Part I, as referenced under \cite{CFGLS24}.
\end{remark}

Since $G$ acts faithfully on the elliptic curve $E$, and not just via translations
(since  the action of $G$ on the product is not free), as in Part I we write
$$ G = T \rtimes \mu_r, \ r \in \{ 2,3,4,6\},$$
where $T$ is a finite group of translations of $E$ normalized by the group $\mu_r$ of $r$-th roots of unity.

We shall henceforth identify $G$ with the corresponding group of automorphisms of $E$, and in these terms we shall describe
 the monodromy homomorphism of the covering $ C \ra C/G = B$.

We write
$$ S \sim_{\bir} (C \times E) / \De_G;$$
 details of the resolution can be found in Subsection \ref{ss:geom}.
By the minimality of $S$ and since $ Kod(S) =1$, the group of birational automorphisms of $S$, respectively of  $C \times E$,
 equals the group of biregular automorphisms.
 Hence, 
as in 3.7 of Part I we find that 
\begin{equation}\label{eq: normalizer vs AutS}
\Aut(S) = N_{\De_G} / \De_G,
\end{equation}
 where $N_{\De_G} $ is the normalizer of the diagonal subgroup $\De_G < G \times G$ inside $\Aut(C \times E)$.

We continue to follow the notation of Part I: for $\Psi \in N_{\De_G} $ we write
$$ \Psi (x,z) = (\psi_1(x), \psi_2(z)) = (\psi_1(x), \la z + \phi(x)).$$

\begin{rmk}\label{rmk: isotrivial nt}
Assume now that $\Psi$ yields a numerically trivial automorphism
$\Psi_S\in\Aut_\QQ(S)$: then (as in section 4 of Part I), both fibrations,
onto $B$, respectively onto $E/G \cong \PP^1$ are preserved.
In particular, $\phi(x) =c$, a constant.

In particular, any automorphism $\Psi_S\in\Aut_{B,\QQ}(S)$ lifts to $\Psi=(\id_C, \psi_2)\in N_{\Delta_G}$
with $\psi_2\in\Aut(E)=E  \rtimes \mu_r$ for the respective $r\in\{2,4,6\}$.

\end{rmk}

Before focusing on  the group $\Aut_{\ZZ}(S)$ in Section \ref{s:iso2}, 
we take the necessary  steps
towards the classification of the possible groups $\Aut_{\QQ}(S)$  in the isotrivial case,
starting with some easy group theoretical considerations.

Assume that $\Psi$, represented by $(\psi_1(x), \psi_2(z))$, yields an element $\Psi_S\in\Aut_{\QQ}(S)$.
To study the impact of \eqref{eq: normalizer vs AutS}
and of Remark \ref{rmk: isotrivial nt}, we observe that if
$ g (z) = \e z + t, \e \in \mu_r, \ t \in T$, then conjugation by 
$\psi_2 (z) = \la z + c$
 sends $g$ to 
$$ g' : = \psi_2 \circ g \circ \psi_2^{-1} = c + \la ( g (\la^{-1} (z-c)))= \e z + (1-\e) c + \la t.$$

Therefore $\psi_2 (z)$ normalizes the action of $G$  on $E$ if and only if
\begin{equation}\label{num}
\la T = T , \ \ \; (1-\e) c \in T \;\; \forall \e \in \mu_r.
\end{equation}

It follows, perfoming the same calculations as in the proof of Lemma 4.3 of Part I,  that 
the second part of \eqref{num} is equivalent to the following conditions:

$$
\begin{array}{c||c|c|c|c}
r & 2 & 3 & 4 & 6\\
\hline
\text{condition} &
2 c \in T &

3 c \in T &
2 c \in T &
c \in T \; (\Rightarrow  \psi_2 \in G)
\end{array}
$$

%

In fact, $\la $ has multiplicative order dividing respectively $2, 4, 6$, according to the case of a general elliptic curve,
a harmonic one, an equianharmonic one: and then the condition $\la T = T$ holds automatically for $r \geq 3$
since by assumption $\mu_r (T)= T$.


\subsection{Geometry of $S$ and conditions for numerical triviality of $\Psi_S$}
\label{ss:geom}

To prepare for the study of $\Aut_\QQ(S)$, we focus on the geometric features of $S$, notably on the two induced fibrations.
 Since $G$ acts faithfully on $C,E, $  $Y : = (C \times E)/G$ has isolated
singularities, which are cyclic quotient singularities with group cyclic of order $r''$ dividing $r$, as follows:

$$
\begin{array}{c||c|c|c|c}
r'' & 2 & 3 & 4 & 6\\
\hline
\text{singularities}\phantom{\hat I} & A_1 & A_2, \frac{1}{3}(1,1) &
A_3, \frac{1}{4}(1,1) &
A_5, \frac{1}{6}(1,1) 
\end{array}
$$

 Here the fibre over the image point of $x \in C$ is the quotient $E/\tau$,
 where  $\tau$ generates  the stabilizer  of  $x \in C$. Denoting by $r'$   the order of the image of
$\tau$ inside $\mu_r$, we have that $ r''$ divides $r'$.


Take now  the minimal resolution $Z$ of the singular quotient $Y : = (C \times E)/G$: then $Z \ra B$ is an elliptic fibration,
but not necessarily relatively minimal, and one has to successively contract the exceptional $(-1)$ curves to reach $S$,
 the relative minimal model.
 
 $$
\begin{array}{ccccccc}
C \times E &&&& Z &&\\
\downarrow & \searrow && \swarrow & & \searrow &\\
C && Y  = (C\times E)/G 
&&\downarrow && S\\
& \searrow & \downarrow &&&&\downarrow\\
&& B = C/G & = & B & = & B
\end{array}
$$

More precisely, the non multiple singular fibres of the elliptic fibration $S\to B$ depend on $r'$ as follows
(compatibly with  Table 5.2 in page 105 of \cite{SS19}):
$$
\begin{array}{c||c|c|c|c}
r' & 2 & 3 & 4 & 6\\
\hline
\text{Kodaira type} &
\I_0^*
& \IV, \IV^* &
\III, \III^* &
\II, \II^* 
\end{array}
$$
Here the starred fibres (the non-reduced ones) arise from resolving $A_{r'-1}$ singularities
exclusively, thus not requiring any contractions (cf.\ Figure \ref{Fig:add}),
while the unstarred fibres (the reduced ones) 
result from a fibre with 4 components on $Z$ being successively contracted to the given configuration.
In detail, the strict transform $\tilde F$ of the fibre $F$ of $Y\to B$ always is a $(-1)$-curve
(with multiplicity $r'$)
while the self-intersection numbers of the  three exceptional curves $C_1, C_2, C_3$ are depicted below:

\begin{figure}[ht!]
\setlength{\unitlength}{.3in}
\begin{picture}(16,4)(-6.5,-1)
\thicklines


\put(2,2.7){\framebox{$r'=4$}}

\put(0,0){\line(1,0){3}}
\put(0,1){\line(1,0){3}}
\put(0,2){\line(1,0){3}}
\put(1,-1){\line(0,1){4}}

\put(1.15,-1){$\tilde F$}
\put(2.8,-.5){$C_1$}
\put(2.8,.5){$C_2$}
\put(2.8,1.5){$C_3$}

\put(.3,2.7){$-1$}
\put(-.7,-.3){$-4$}
\put(-.7,.7){$-2$}
\put(-.7,1.7){$-4$}

\put(-4,2.7){\framebox{$r'=3$}}

\put(-6,0){\line(1,0){3}}
\put(-6,1){\line(1,0){3}}
\put(-6,2){\line(1,0){3}}
\put(-5,-1){\line(0,1){4}}

\put(-4.85,-1){$\tilde F$}
\put(-3.2,-.5){$C_1$}
\put(-3.2,.5){$C_2$}
\put(-3.2,1.5){$C_3$}

\put(-5.7,2.7){$-1$}
\put(-6.7,-.3){$-3$}
\put(-6.7,.7){$-3$}
\put(-6.7,1.7){$-3$}

\put(8,2.7){\framebox{$r'=6$}}

\put(6,0){\line(1,0){3}}
\put(6,1){\line(1,0){3}}
\put(6,2){\line(1,0){3}}
\put(7,-1){\line(0,1){4}}

\put(7.15,-1){$\tilde F$}
\put(8.8,-.5){$C_1$}
\put(8.8,.5){$C_2$}
\put(8.8,1.5){$C_3$}

\put(6.3,2.7){$-1$}
\put(5.3,-.3){$-2$}
\put(5.3,.7){$-3$}
\put(5.3,1.7){$-6$}

\end{picture}
\caption{Strict transform $\tilde F$ and exceptional curves $C_1, C_2, C_3$  
}
\label{fig:exc}
\end{figure}


This set-up imposes strong restrictions on $\Aut_\QQ(S)$.
We first record the following properties of numerically trivial automorphisms 
$\Psi_Y$ and $\Psi_S$ induced by $\Psi\in N_{\Delta_G}$:

\begin{lemma}
\label{lem:Psi_Y}
If $\Psi_S\in\Aut_\QQ(S)$, then 

(1) $\Psi: = (\psi_1, \psi_2)$ must act trivially on $ H^*(C \times E, \QQ)^G$.

(2) $\Psi_Y$ fixes each singular point of $Y$
unless possibly when the corresponding fibre on $S$ has Kodaira type $\II$.

(3) For each singular point $y$ of $Y$ which is fixed, 
 $\Psi_Z$ must preserve the irreducible components of the exceptional divisor in $Z$ over $y$.

(4) Conversely, if (i), (ii), (iii) are satisfied, then $\Psi_S\in\Aut_\QQ(S)$.

\end{lemma}

\begin{proof}

(1) follows immediately since $ H^*(C \times E, \QQ)^G \cong H^*(Y, \QQ) \subset  H^*(Z , \QQ) $.

(2): If $\Psi_S$ is numerically trivial, then the reducible  singular fibres are
 left fixed: by our description,  the only  irreducible fibres  are  those  of type $\II$. 
 
 Hence the fibres not of type $\II$ are left invariant.

If the corresponding fibre is starred, then the exceptional divisors have negative self intersection, hence 
cannot be permuted on $Z$ and on $S$, hence the singular points have to be fixed.

The same argument directly applies to the case $r'=3$,
since all 3 exceptional curves survive on $S$.

If $r'=4$, only two exceptional curves remain uncontracted on $S$
(namely $C_1, C_3$ in the notation of Figure \ref{fig:exc}),
so each of these is fixed by $\Psi_S$, and so are the corresponding singular points on $Y$.
But then the fibre is fixed as well, of course, as it is reducible,
and so is the remaining singular point on it.

All in all, this only leaves the possibility for a non-trivial action
in the case for $r'=6$ stated in the lemma.
(More precisely, there have to be at least two fibres of the rightmost configuration in Figure \ref{fig:exc}
as these are contracted to cuspidal cubics on $S$ which can thus be permuted by a numerically trivial automorphism, 
 but the singular points on a given fibre of $Y$ can never be permuted because the respective orders are different.)

(3): Observe that $Z$ is obtained from 
$S$ via a canonical blow up procedure, making the fibre a normal crossing divisor. Hence, if the fibre is preserved, 
then the numerically trivial automorphism preserves all the exceptional components of $Z \ra Y$.

To show (4) we  apply the theorem of Mayer--Vietoris  to the union $U$ of  tubular neighbourhoods of the exceptional divisors  and to the complement $V$ of the 
exceptional divisors
to see that $\Psi_Z$, hence also $\Psi_S$, is  numerically trivial if  these conditions are satisfied
(cf.\ \cite[Prop.\ 3.1]{FPR}).
\end{proof}

 When considering the second fibration
$S\to E/G\cong \PP^1$, we can draw the following consequence:

\begin{proposition}
\label{prop:E/G}
$\Aut_\QQ(S)$ induces a  trivial action on $E/G$.
\end{proposition}

\begin{proof}
Consider the fibration $ S \ra E/G = \PP^1$. The bona fide singular fibres 
(they can be also multiple  with smooth support) occur 
over the branch points of $ E \ra E/G$, which are 3 or 4 in number according to the following table
(in agreement with the fibre configurations in Figure \ref{fig:exc}):

$$
\begin{array}{c||c|c|c|c}
r & 2 & 3 & 4 & 6\\
\hline
\text{number of branch points} & 4 & 3 & 3 & 3\\
\text{branching indices} &
(2,2,2,2) & (3,3,3) & (4,4,2) & (6,3,2)\\
\end{array}
$$

%

For $r=2$ the corresponding fibres of $S\to E/G$ correspond to different components of a fibre 
 of the elliptic fibration $S\to C/G$ of type $\I_0^*$,
 hence they cannot be permuted by a numerically trivial automorphism. 
 The analogous picture holds  for $r=3$ and $r=4$, since we have observed that
 the singular points of $Y$ on a fibre (with $r' = r$) of type different from $\II$ must be kept fixed.

 For $r=6$ we can simply appeal to the fact that the branching indices are different.
In conclusion, with three points of $E/G\cong \PP^1$ fixed in each case, 
$\Aut_{\QQ}(S)$ acts trivially  on $E/G$ as stated.
\end{proof}

The proposition has the following important consequence on $\Aut_{B,\QQ}(S)$ which 
crucially improves on the statement of \eqref{eq: normalizer vs AutS}.

\begin{cor}
\label{cor:N_GxG}
Any automorphism $\Psi_S\in\Aut_{B,\QQ}(S)$ lifts to an element $\Psi\in N_{G\times G}(\Delta_G)$,
the normalizer of $\Delta_G$ inside $G\times G$.
\end{cor}

\begin{proof}
By Remark \ref{rmk: isotrivial nt}, $\Psi_S\in\Aut_{B,\QQ}(S)$ lifts to $\Psi=(\id_C,\psi_2)$.
But then $\psi_2\in G$ by Proposition \ref{prop:E/G}.
That is, $\Psi\in G\times G$, and the claim follows.
\end{proof}

As an application we derive following fundamental restriction in terms of the centre $Z(G)$ of $G$
which will guide many subsequent considerations:

\begin{cor}
\label{cor:Z(G)}
$\Aut_\ZZ(S)\subseteq\Aut_{B,\QQ}(S) \hookrightarrow Z(G)$.
\end{cor}

\begin{proof}
The first inclusion follows from Corollaries \ref{cor: rational J(S) AutSB} and \ref{cor: p_g>0 isotrivial}.

The second follows from Corollary \ref{cor:N_GxG} applied to \eqref{eq: normalizer vs AutS}
since 
$$N_{G\times G}(\Delta_G) = \{(g_1, g_2)| g_1 g g_1^{-1}= g_2 g g_2^{-1}
, \  \forall g \in G \} = \{(g_1, g_2)| g_2^{-1}  g_1 \in Z(G)
 \} $$
hence $N_{G\times G}(\Delta_G)/\Delta_G \cong Z(G)$.
\end{proof}

For the reader's convenience, we include the classification of the possible centres $Z(G)$ 
from \cite[Lemma 4.4]{CFGLS24}:

\begin{lemma}
\label{lem:3cases}
In the above setting, there are essentially 3 cases:
\begin{enumerate}
\item[(i)] 
$ Z (G)\subseteq T$,
the non-trivial possibilities being $(\ZZ /2)^2, \ZZ /3,  \ZZ /2$, or 

\item[(ii)] 
$G = Z(G)$, hence $G$ is abelian and a subgroup of one of
the  following groups $G_0$:
$$
\begin{array}{c||c|c|c|c}
r & 2 & 3 & 4 & 6\\
\hline
G_0 & E[2]\times \mu_2 & \ZZ/3\ZZ \times \mu_3 & \ZZ/2\ZZ \times \mu_4 & \mu_6
\end{array}
$$
\item[(iii)]  
$G = E[2] \rtimes \mu_4,  \ {\rm here} \ Z (G) =  (\ZZ /2 )\frac{1}{2} (1 + i) \times \mu_2.$
\end{enumerate}

\end{lemma}

\subsection{ $\Aut_\ZZ(S)$ acts trivially on the base $B$}

The following lemma,  dealing with property (1) of Lemma \ref{lem:Psi_Y},
will be  useful momentarily.

\begin{lemma}\label{Factorization}
Consider the factorization 
$$ C \ra D : = C/T \ra B = C/G = D / \mu_r,$$
and set $E' : = E / T$, $S' : = (C \times E)/T$.
Then:

\begin{enumerate}
\item[(i)]  
There is an equality
$$ H^*(C \times E, \QQ)^G = [H^*(C \times E, \QQ)^T]^{\mu_r} = H^*(S', \QQ)^{\mu_r} = [H^*(D, \QQ) \otimes H^*(E', \QQ)]^{\mu_r}.$$

\item[(ii)]
Condition (1): $\Psi: = (\psi_1, \psi_2)$  acts trivially on $ H^*(C \times E, \QQ)^G$
  is automatically
verified if $D$ has genus $0$.

\item[(iii)]
If $D$ has genus $\geq 2$, 
or  $D$ has genus $=1$ (and $B$ has genus $0$)
no automorphism
with $\psi_1=1$, and with  $ \psi_2$ not a translation,  is numerically trivial,
  except for the case where $D$ has genus $\geq 2$, $r=6$  and $H^1(D, \CC)$
contains all nontrivial character spaces except those which correspond to a primitive character.

\item[(iv)]

 The case where $D,B$ have both genus $1$ cannot occur. 
\item[(v)] 
$D$ has genus $0$ if and only if $D \ra B = \PP^1$ is branched only in two points with
local monodromies $\e, \e^{-1} \in \mu_r$.
\end{enumerate}
\end{lemma} 

\begin{proof}
The first statement (i)  is obvious since $T$ acts trivially on  $H^*(E, \QQ) =  H^*(E', \QQ)$,
and $H^*(D, \QQ) = H^*(C, \QQ)^T$.

 For the other statements  observe preliminarly that any product action is the identity 
 on $H^i(D, \QQ) \otimes H^{2-i}(E', \QQ)$
for $i=0,2$. 

Then it suffices to consider $$ \sH : = [H^1(D, \QQ) \otimes H^1(E', \QQ)]^{\mu_r},$$ and the second factor
$H^1(E', \CC)$ splits  by Hodge Theory as $V_1 \oplus V_{-1}$ according to the two characters $1,-1$ of $\mu_r$, 
 considered as elements of  $\ZZ/r\ZZ$.

 (ii): if $D= \PP^1$, then $ \sH =0$ and there is nothing to verify.

(iii): if $D$ has genus $\geq 2$, then since $\mu_r$ (by Lefschetz) acts faithfully on $H^0(\Omega^1_D)$, this representation contains,
if $r=2,3,4$
either $V_1$ or $V_{-1}$, hence $H^1(D, \CC)$ contains $V_1 \oplus V_{-1}$.  If $\psi_1=1$, then
the action on $V_1 \otimes V_{-1}$ is nontrivial if $\psi_2$ acts nontrivially on $H^0(\Omega^1_E)$:
hence $\psi_2$ must be a translation.

If instead $r=6$, the same argument applies if $H^0(\Omega^1_D)$ contains a primitive character. There remains the case
where $H^1(D, \CC)$ contains both $V_2 \oplus V_{-2}$ and $V_3 $ but not $V_1 \oplus V_{-1}$.

A similar argument applies if $D$ has genus $1$ and $B = \PP^1$. Since then $\mu_r$ acts on $D$ by a non translation,
faithfully, hence in suitable coordinates $ z \mapsto  \la z$ with $\la$ a primitive r-th root of $1$;  then we observe that, for $ r =2,3,4, 6$,
$\la $ can only be $\e, \e^{-1}$. 

(iv): if $D,B$ have genus $1$, then $\mu_r$ acts on $D$ via a translation, hence the action of $\mu_r$ on $D \times E'$ is
free, hence $ C \times E \ra S$ is unramified, a contradiction since $\chi(S) >0$.

(v): finally, if $D= \PP^1$,  an automorphism of order $r$ has necessarily the form $(x_0 , x_1) \mapsto (x_0, \la x_1)$
in suitable coordinates, with $\la$ an  $r$-th root of $1$.
\end{proof}

The next result and its proof illustrate once again how isotrivial fibrations lend themselves
to direct geometric arguments.
Hence we can bypass the classifications of \cite{DM22}, \cite{MP86} and \cite{SS19}
which our general results such as Corollary \ref{cor: rational J(S) AutSB} and thus also Corollary \ref{cor:Z(G)}
are based on.

\begin{theorem}\label{trivialonB}
In the isotrivial case, $\Aut_\ZZ(S)$ acts trivially on the base $B$.
\end{theorem}

\begin{proof}
 By Proposition \ref{prop: trivial Aut_Q on B}, $\Aut_\QQ(S)$ acts trivially on the base $B$
 unless $J(S)$ is rational, that is, 
  $q(S)= p_g(S)=0 \Rightarrow e(S)=12$, and  $B$ is $\PP^1$.
  
  By Kodaira's canonical bundle formula \eqref{eq: canonical bundle formula}, 
$K_S$ is given as the pull back of a divisor of degree $-1$ on $\PP^1=B$
plus the contribution $\sum_j \frac{m_j-1}{m_j} F_j$ over the multiple fibres, hence in our case of
Kodaira dimension $Kod(S)=1$ there are at least two multiple fibres.

By the cited Lemma 2.1 of Part I local monodromies which are translations (hence correspond to smooth multiple fibres) yield fixed points for the action on $B$
of the  automorphism $\s$ induced by $\psi_1 $.

By Lemma \ref{lem:Psi_Y}  the same holds true  for other local monodromies which are not
of the form 
 \begin{equation}\label{eq: two star}
   z \mapsto
   \e z + t, \ \e = \exp\left(\frac{1}{3} \pi i\right),\, 
    t\in E,
  \end{equation}
  since only for $r'=6$ we have a fibre of type II.
  
  Hence there remains  only to consider only  the case $r=6$, 
   and  $r'=6$ 
  for all local monodromies which are not translations, and more precisely we may assume that
   the number of monodromies
which are translations equals exactly $2$ and the other ones are of type \eqref{eq: two star}.

By the Zeuthen--Segre formula, fibres of type II give a contribution $2$ to $e(S)$, smooth multiple
fibres give no contribution.\footnote{ It is amusing to observe that a local monodromy of the form
 $\e^h z + t$, with $ 2 \leq h \leq 5$, yields a 
contribution to the Euler number which equals $2h$.}

Hence we have  exactly $6$ points with monodromy of type \eqref{eq: two star}, and $ 2$  points whose monodromies  are translations.

 The set $\sD$ of $6$ points  with monodromy \eqref{eq: two star} is permuted by $\s$, since two points can be exchanged only if
 they have the same monodromy.
 
Now  $\s$ fixes the two points with local monodromy equal to a translation, hence $\s$  
 belongs  to a cyclic group $\subset \CC^*$, permuting the set $\sD$, and moreover all the orbits
 different from the two fixed points of $\s$ have the same cardinality.
 
 Hence  $\s$ belongs to a subgroup of the cyclic group $\ZZ/6\ZZ$, and let us assume that $\s$
 generates the subgroup $H$  induced  from the first components $\psi_1$ of  cohomologically trivial automorphisms of $S$;  
   we denote by $d  $  the order of $\s$, hence $ d \in \{ 2,3,6\}$ if $\s$ is nontrivial.

  We can choose coordinates such that $H$ acts on $\PP^1$ via $ x \mapsto \zeta x$,
 with $\zeta$ the primitive $d$-th root of $1$ with smallest argument.
 
 Let us exclude first  the case where $d=6$. For $d=6$,  then all the monodromies of points of $\sD$ are equal 
 (and changing the origin in the elliptic curve $E$ we can make  them to be
 of the form $\e z$) hence the two monodromies which are translations are equal, and they must be of the form $t, -t$,
 where $ t = \frac{1-\e^2}{3}$.

 Writing $G \cong T \rtimes \mu_6$, consider the  factorization of Lemma \ref{Factorization}
 $$ C \ra D : = C/T \ra D/ \mu_6 = C/G =B = \PP^1,$$
 and recall that  $E' : = E/T$.

 Since all the $\mu_6$ valued monodromies are equal, $D$ is the familiar Fermat sextic,
 whose equation in affine coordinates equals
 $$ \{ (x,z) \mid F(x,z): = z^6 - x^6 +1=0 \}.$$
 
 The space of holomorphic of differentials $H^0(D, \Omega^1_D) \cong H^0(\hol_D(3))$ has as basis
 $$ \Res \left( \frac{1}{F} \ dx\wedge dz \right) \ ( z^j x^i), \ i +j \leq 3.$$
 
 There are four character spaces as a representation of $\mu_6$, $V_1, V_2, V_3, V_4$ and   a basis element
 as above belongs  to $V_{1 + j}$. While the character of the group $H$
 on the same basis element equals
 $1 + i $.
 
 We want to see whether condition (1) of Lemma \ref{lem:Psi_Y} is verified.
 
 By Lemma \ref{Factorization} (i), we have
 $$ H^*(C \times E, \QQ)^G = [H^*(C \times E, \QQ)^T]^{\mu_r}  = [H^*(D, \QQ) \otimes H^*(E', \QQ)]^{\mu_r}, $$
 and we shall see that the action is nontrivial on
 $$[(H^0(D, \Omega^1_D)\oplus  \overline{ H^0(D, \Omega^1_D)}) \otimes  H^1(E, \CC)]^{\mu_6}=
 [V_1 \otimes \overline{ H^0(E, \Omega^1_E)}] \oplus [ \overline{V_1} \otimes H^0(E, \Omega^1_E)],$$
 leading to a contradiction.
 
 Whatever the action of $\psi_2$ on $H^0(E, \Omega^1_E)$, the automorphism $\Psi$ shall never act trivially
 on this subspace of the $G$-invariant cohomology, since $V_1$ contains $4$ different character spaces for the action of $\s$.
 
 Hence this action is not numerically trivial and the case $d=6$ is excluded.
 
 We  proceed quite similarly with the cases $d=2, 3$: then again, since we have $6$ monodromies of type \eqref{eq: two star}, we have a similar 
 factorization, and we look again at the subspace of the cohomology of $C \times E$ given by the pull back of 
 the cohomology of $D\times E$. The character spaces as a representation of $\mu_6$ are the same as before
 (just changes the equation $F$), and we must see how $ H = < \s > $ acts. 
 
 For $d=3$,  on $V_1$ we have the characters $1,2,0$ of $H$, and again we have a contradiction; for $d=2$,
 we have the characters $1,0$ of $H$, hence this case is also excluded.
\end{proof}

\subsection{Analysis of $\Aut_\QQ(S)$ and $\Aut_{B,\QQ}(S)$ for $r=6$}

We can now prove the remaining  of Proposition \ref{prop: pg=0 bd J(S)} (3) (b)
by considering the case where the special surface $X_{22}$ from Table \ref{table:special}
appears as Jacobian.

This will be instrumental for the proof of Theorem \ref{thm:p_g=0} (ii).

\begin{lemma}[$r=6$]
\label{lem:r=6}
If $J(S)=X_{22}$, then $\Aut_{B,\QQ}\subsetneq \mu_6$.
\end{lemma}

\begin{proof}
By Proposition \ref{prop: pg=0 bd J(S)} (3), we have 
$\Aut_{B,\QQ}(S)\hookrightarrow \Aut_{B,\QQ}(J(S))=\mu_6$.
Consider the Galois base change $C\rightarrow B$ with Galois group $G$ as above.

By  \eqref{eq: normalizer vs AutS}, any automorphism $\Psi_S\in\Aut_{B,\QQ}(S)$ lifts to $\Psi=(\id_C,\psi_2)\in N_{\Delta_G}$,
since $\Psi_S$ acts trivially on $B=C/G$.
By Remark~\ref{rmk: isotrivial nt}, we have $\psi_2=\sigma\in G_E$, 
the image of the action homomorphism $G\rightarrow \Aut(E)$. 
That is, 

$\sigma(z) = \lambda z + c$ for some $\lambda\in \mu_6$ and $c\in E$. 
Since $\Psi\in N_{\Delta_G}$, that is, $\Psi\Delta_G\Psi^{-1} = \Delta_G$, 
we infer that $\sigma$ lies in the centre of $G_E$. 
Since $S$ has Kodaira dimension 1 and $J(S)$ is rational, there must be multiple fibres of $f$. 
This implies that the translation subgroup $T$ of $G_E$ is nontrivial. 
Since $\sigma$ commutes with the elements of $T$, which are of the form $z\mapsto z+ t $, it must hold $\lambda t  = t$ for any $t\in T$. 
It is then clear that $\lambda$ cannot generate $\mu_6$, since a generator of $\mu_6$ does not fix any non-neutral point of $E$. 
It follows that the image of $\Aut_{B,\QQ}(S)\hookrightarrow \Aut_{B,\QQ}(J(S))=\mu_6$ is a proper subgroup of $\mu_6$ as stated.
\end{proof}

For later reference, we record the following immediate consequences, using 
 the bound $|\Aut_\QQ(S)|_B|\leq P_2(S)+1$ from 
Proposition \ref{prop: pg=0 bd J(S)} (3) (a):

\begin{cor}
\label{cor:X_22}
If $J(S)=X_{22}$, then 
$$|\Aut_{B,\ZZ}(S)|\leq |\Aut_{B,\QQ}(S)|\leq 3 \;\;\;  \text{ and } \;\;\; |\Aut_\QQ(S)|\leq 3 (P_2(S)+1).
$$
\end{cor}

In particular, this verifies the second bound $ |\Aut_{B,\QQ}(S)|\leq 4$ of Theorem \ref{thm:p_g=0} (ii)
on isotrivial surfaces,
based on Proposition \ref{prop: pg=0 bd J(S)} (2), (3) (a) and (b).

To complete the proof of the overall bound $|\Aut_\QQ(S)|\leq 3 (P_2(S)+1)$ of Theorem \ref{thm:p_g=0} (ii), 
it remains to analyse the case $r=4$, i.e.\ the special surface $X_{33}$ from 
Table \ref{table:special}.

\subsection{Local monodromies for $\Aut_\QQ(S)$}

We now switch the focus from $\Aut_{B,\QQ}(S)$ to the whole of $\Aut_\QQ(S)$.
To this end, the local monodromies of the cover $C\to C/G=B$ will play a crucial role.

\begin{lemma}
\label{lem:monos}
\label{mon-restriction}
Assume that 
$\Psi_S\in\Aut_\QQ(S)$
is induced by $\Psi : = (\id_C, \psi_2)$, and that we have a local monodromy $\s\neq\id$ over the
point $G x\in B$. 

%

If there is a point $y\in E$ with stabilizer generated by $\s$  under the $G$-action, 
then $\psi_2\in\langle\s\rangle$.
\end{lemma}

\begin{proof}
Under the given assumptions,
$(x,y)$ maps to a singular point $P$ of $Y$.

By Lemma \ref{lem:Psi_Y}, $\Psi_Y$ fixes $P$, unless we have a fibre of type $\II$: 
but in this case, since the action on the base $B$ is trivial, the same argument applies;
hence $\psi_2(y)=y$ and thus $\psi_2\in\langle\s\rangle$ as stated.
\end{proof}

\subsection{Interplay of $\Aut_\QQ(S)|_B$ and $\Aut_{B,\QQ}(S)$ for $r=4$}
\label{ss:r=4}

We are now in the position to analyse the remaining case needed to prove Theorem \ref{thm:p_g=0}.

\begin{theorem}
\label{prop:X_33}
If $J(S)=X_{33}$, then $|\Aut_\QQ(S)|\leq 2 (P_2(S)+1)$.

Moreover   there are  infinitely many  examples with $|\Aut_{\QQ}(S)|  = 2 s =  2 (P_2(S) +1)$, 
 for each value of $s \in 2 \NN, s \geq 4$,  and with $|\Aut_{B, \QQ}(S)|  =  s/2 =  (P_2(S) +1)/ 2$.
\end{theorem}

\begin{proof}

Assuming to the contrary that $|\Aut_\QQ(S)|> 2 (P_2(S)+1)$,
we infer from Proposition \ref{prop: pg=0 bd J(S)} (3) that 
\begin{enumerate}
\item
$\Aut_\QQ(S)|_B$ is cyclic of order $s=P_2(S)+1\geq 2$, equalling the number of multiple fibres,
\item
$\Aut_{B,\QQ}(S) \cong \Aut_{B,\QQ}(J(S)) \cong \mu_4$, and
\item
$J(S)=X_{33}$, i.e.\ $r=4$.
\end{enumerate}
Since  $\Aut_{B,\QQ}(S)\cong \mu_4$ centralizes $G$ by Corollary \ref{cor:Z(G)}, 
we deduce from Lemma \ref{lem:3cases} that $T \cong \ZZ/2\ZZ$,
generated by $t_0: =  \frac{1 + i}{2}$; in fact, $G = T \times \mu_4$.
More precisely, by the trivial action on the base, we can lift a generator $\Psi_S$ of $\Aut_{B,\QQ}(S)\cong \mu_4$
to $(\id_C, \psi_2)$.
Here $\psi_2\in G$ is of order $4$; 
after a change of coordinates
in the elliptic curve $E$, we can write $\psi_2= \s $ where $\s(z)  : = i z $.

It follows from Lemma \ref{Factorization} (iv) 
that $D =C/T \cong \PP^1$, and that $C$ is a double covering of  $D$ branched in $4s$ points (above the $s$ multiple fibres which are in fact double).

 The monodromy 
 of the covering $C\to C/G = B$
 with values in $G = T \times \mu_4$  must then be 
$(\s, 
 \s^{-1}, 
 \tau,  \hdots, \tau)$ by Lemma \ref{lem:monos};
here $\tau$ denotes translation by $t_0$,
 the first two branch points correspond to the reducible fibres, say at $0, \infty$,
and the other $s$ branch points correspond to the double fibres.

In particular, we infer that $s$ is even and confirm,  by looking at the two reducible fibres,
that $b_2(S)=10$, hence $q(S) = p_g(S)=0$.

With the cyclic order $s$ action of $\Aut_\QQ(S)|_B$, we may assume that  the other branch points   are
$ \{1, \zeta, \zeta^2, \dots, \zeta^{s-1}\}$, where $\zeta \in \mu_s$ is primitive.

Over the point $0$ there lie two points $x_1, x_2\in C$
which are exchanged by $\tau$ and  left fixed by $\s$; likewise over $\infty$, there
lie two points $y_1, y_2\in C$ which are exchanged by $\tau$ and  left fixed by $\s$.

Note that the 
two $A_3$ singularities, corresponding to $x_1, x_2$ on $C$, say, 
are interchanged by the automorphism induced by  $\tau\times\id_E$ and fixed by 
those induced by $\sigma\times\id_E$,
hence $\Psi_S$ preserves all components of the resulting $\III^*$ fibre on $S$,
and same for the components of the $\III$ fibre corresponding to $y_1, y_2$
(corresponding to $C_1, C_3$ in Figure \ref{fig:exc}).

We infer that
$\Psi_S\in\Aut_\QQ(S)$ by construction,
since $\Psi_S$ 
preserves the fibre components which generate $\Num(S)\otimes\QQ$ together with any multisection
(because $\MW(J(S))$ is finite).

 By construction, $\Aut_\QQ(S)|_B$ acts on $B=\PP^1$ by the automorphism $\phi:  \xi \mapsto \zeta \xi$  which preserves the branch set and  the monodromy.
Hence $\phi$ lifts to an automorphism $\phi'$ of $C$ which centralizes $G$ 
 (see for instance \cite{topmethods}, remark 2.8 of section 6.1 implying the existence of a lift  
 $\phi'$ to $C$: then the condition that the monodromy is preserved amounts to an
 exact sequence $ 1 \ra G \ra \pi_1^{orb} / H \ra \langle \phi' \rangle \ra 1$, where $\phi'$ 
 conjugates $G$ trivially).
We will see that 
 this lift has order $s'=4s$.

For this purpose, consider the abelian group $G'=\langle \phi', G\rangle$.
Since this contains $G$ as a normal subgroup, we have a composition of covers
\[
C \to C/G \cong \PP^1 \to \PP^1/\langle\phi\rangle \cong C/G'
\]
which as a whole is branched in three points, say $\{0, \infty, 1\}$.
By construction, there is ramification of order $4s$ at the former two branch points.
This implies not only that 
$$
G'\cong \ZZ/4s\ZZ \times\ZZ/2\ZZ = \langle\phi'\rangle\times\langle\tau\rangle,
$$
but also gives the local monodromies $\{\phi',\phi'^{-1}\tau, \tau\}$.
In particular, we infer as in the above argument for $\Psi_S\in\Aut_\QQ(S)$ that 
neither the induced automorphism $\Phi'\in\Aut(S)$ nor $\tau_S$
fixes the fibre components  at $\infty$, so $\Phi', \tau_S\not\in\Aut_\QQ(S)$,
and same with $\Phi'\circ\tau_S$ at $0$.
Hence only a subgroup of $G'$ of size at most $2s$, namely $\langle\Phi'^2\rangle$, 
acts numerically trivially,
giving the required contradiction.

Considering  now, for even $s$,  the  Galois covering 
 $ C \ra \PP^1$ branched on three points, namely
 $\{0, \infty, 1\}$, with Galois group $\mu_{4s} \times \ZZ/2$,
 and local monodromies  $\{\phi',\phi'^{-1}\tau, \tau\}$, 
 and  the  Galois covering $ C \ra C/G$ corresponding 
 to the normal subgroup $G= \langle (\phi')^s, \tau \rangle$,
 we obtain the desired examples $S \sim_{bir} (C \times E)/G$ showing
 that the cases 
with $|\Aut_{\QQ}(S)|  = 2 s =  2 (P_2(S) +1)$, 
 for each value of $s \in 2 \NN, s \geq 4,$ are realized.

\end{proof}

\subsection{Examples with $|\Aut_\QQ(S)|=2(P_2(S)+1)$}
\label{rem:X_33}

In line with Theorem \ref{thm:X_33},
it is instructive to note that there are both examples with $|\Aut_{B,\QQ}(S)|=4$
and with $|\Aut_\QQ(S)|_B|=s$, but equality cannot be attained simultaneously. To see this in explicit examples, we follow \cite{KS23}
to consider the affine surface 
\begin{eqnarray}
\label{eq:Y-d}
S_0: \;\; 
y^2 = g x^4 + g^3 t
\end{eqnarray}
where $g\in\C[t]$ generally can be any polynomial without multiple roots.
If $t\nmid g$, then the minimal model $S$ has an elliptic fibration
with singular fibres of type $\III$ at $t=0$, $\III^*$ at $\infty$,
and double fibres with smooth support at the roots of $g$.
(These arise from the resolution of the elliptic singularities at $(0,0)$
in the fibres above the roots of $g$ as in \cite[\S9]{KS23}.)
Note that $J(S)=X_{33}$ by construction.

To continue, we distinguish by the parity of $\deg(g)$.

\begin{example}[even degree]
\label{ex:here-even}
If $\deg(g)=2m$, then the model \eqref{eq:Y-d} compactifies to a singular  hypersurface of degree $6m+4$ in weighted projective space 
$\PP[1,1,m+1,3m+2]$.
At $\infty$, we thus get a local equation
\begin{eqnarray}
\label{eq:Y-d'}
S_0: \;\; 
y'^2 = g' x'^4 + g'^3 t'^3
\end{eqnarray}
which reveals an $E_6$ singularity whose exceptional curves connect with
the strict transforms of the two components given by $t'=y'\pm
 \sqrt{g'(0)}
 x'^2=0$ 
to a fibre of Kodaira type $\III^*$.
We  infer that the order $4$  automorphism $\Psi: (x,y,t)\mapsto (\sqrt{-1}x,-y,t)$ 
preserves each fibre component and is thus numerically trivial, i.e.\ $\Psi\in\Aut_{B,\QQ}(S)$
as in the proof of Theorem  \ref{prop:X_33}.
\end{example}

\begin{example}[odd degree]
\label{ex:here-odd}
If $\deg(g)=2m+1$, then the model \eqref{eq:Y-d} compactifies again to a singular  hypersurfaceof degree $6m+4$ 
in weighted projective space 
$\PP[1,1,m,3m+2]$.
However, at $\infty$, we  get a different kind of local equation
\begin{eqnarray}
\label{eq:Y-d''}
S_0: \;\; 
y'^2 = t'^3g' x'^4 + g'^3
\end{eqnarray}
with an $E_6$ singularity not visible in this chart,
but again, its exceptional curves connect with
the strict transforms of the two components given by $t'=y'\pm \sqrt{g'(0) ^3
}=0$ 
to a fibre of Kodaira type $\III^*$.
We thus infer that the order $4$  automorphism $\Psi: (x,y,t)\mapsto (\sqrt{-1}x,-y,t)$ 
does not preserve the fibre components at $\infty$ (but $\Psi^2$ does, of course). 
\end{example}

\begin{lemma}
\label{lem:2s}
Setting $g=t^s-1$ in the above examples, we obtain $|\Aut_\QQ(S)|=2s$.
\end{lemma}

\begin{proof}
This follows from the order $s$ automorphism on the base.

In detail, if $s$ is even, this extends to an order $4s$ automorphism $\Phi'$ on $S$ 
which can be given by
$$
\Phi': \;\; (t,x,y) \mapsto (\zeta^4 t,\zeta x, \zeta^2 y) \;\;\; (\zeta\in\mu_{4s}),
$$ 
but this is not numerically trivial, as predicted by Theorem \ref{prop:X_33}. 
Hence only $\Phi'^2\in\Aut_\QQ(S)$, with induced action of order $m=s/2$ on the base.

If $s$ is odd, the  action on the base, of order $s$, extends to an automorphism 
$$
\Aut_\QQ(S) \ni \Phi: \;\; (t,x,y) \mapsto (\xi^4 t,\xi x, \xi^2 y) \;\;\; (\xi \in \mu_s)
$$ of the same order $s$, 
but then we have  $\Psi^2\circ\Phi\in\Aut_\QQ(S)$ which is of order $2s$ as stated
(but  $\Psi\circ\Phi\not\in\Aut_\QQ(S)$).
%
%
\end {proof}

Note that the above  lemma provides the examples for any $s>1$ whose existence was stated in Theorem \ref{thm:p_g=0}.

\subsection{Completion of proof of Theorem \ref{thm:p_g=0}}

We complete the proof of Theorem \ref{thm:p_g=0} (ii) by exhibiting the examples needed for the remaining existence statement:

\begin{prop}\label{order3}
For any $s\in 3\NN$, there is a properly elliptic surface $S$
with $\chi(S)>0$, with $s$ triple fibres and with $|\Aut_\QQ(S)|=3s = 3 (P_2(S)+1)$.
\end{prop}

\begin{proof}

We construct $S$ as quotient of a product $C\times E$ by 
$$G = \ZZ/3\ZZ \rtimes \mu_6 = \langle\eta\rangle\rtimes\langle\e\rangle,
\;\; \eta = \left(\frac{1 + \e}{3} \right).
$$
Here $E$ is the Fermat elliptic curve and we let $C \ra \PP^1$ be the Galois $G$-covering with $s+2$ branch points and monodromies 
\begin{eqnarray}
\label{eq:monos}
(\e, \e^{-1}, \eta, \dots, \eta)
\end{eqnarray}
compatible with Lemma \ref{lem:monos}.

As before, $S$ is birational to $Y : = (C \times E)/G$, and the pluricanonical fibration $f\colon S \ra C/G\cong\PP^1$
has the first  singular fibre of type $\II$ (a cuspidal cubic) and the second one of type $\II^*$. 
The other  singular fibres are $ 3 F'_1, \dots, 3 F'_s$, where each $F'_j$ is again isomorphic to
the Fermat   elliptic curve.

Clearly $\e^2\times\id_E$ descends to an automorphism $\Psi$ of $S$ because $\e^2$ centralizes $G$.
Note that it is automatically numerically trivial because the 
$\II^*$ fibre does not admit any symmetries.
Hence $\Psi$ preserves each fibre component,
and same for some multisection,
but together they form a $\QQ$-basis of $\Num(S)$ because $\MW(J(S))$ is finite.
(We will discuss the case over $\ZZ$ briefly in Section \ref{ss:pf_iso}.)

Now assume that the branch points are (in the order of the monodromies in \eqref{eq:monos})
$0, \infty, 1, \zeta, \zeta^2,\hdots, \zeta^{s-1}$ for some primitive $s$-th root of unity.
Then multiplication by $\zeta$ preserves the branch locus and the monodromies,
hence lifts to an automorphism of $C$ and descends to an automorphism $\Phi$ of $S$.
We record the following two properties:
\begin{itemize}
\item
$\Phi\in\Aut_\QQ(S)$ by the same argument as for $\Psi$ above;
\item
$\Phi|_B$, and thus 
$\Phi$, has order a multiple of $s$
by construction.
\end{itemize}
From their actions on the base  we infer that
$|\langle\Psi, \Phi\rangle|\geq 3s$. But then Corollary \ref{cor:X_22} implies equality.
\end{proof}

\begin{remark}
\label{rem:r=2,3}
One can construct similar examples with large order numerically trivial action on the base
for $r=2, 3$ either directly as a suitable Galois quotient
or by applying Construction \ref{constr} to $X_{11}(\lambda)$ and $X_{44}$.
(For $X_{33}$, this has effectively been carried out in Examples \ref{ex:here-even}, \ref{ex:here-odd}.)
\end{remark}

\section{Cohomologically  trivial automorphisms in the non isotrivial case -- Proof of the order 2-statement of Theorem \ref{thm:ct}}
\label{s:2}

The general statements
 (i) and (ii)
of Theorem \ref{thm:ct} follow from Corollaries \ref{cor: rational J(S) AutSB} and \ref{cor: pg=0 Aut_Z}.
In this section, we construct properly elliptic surfaces $S$ with $\Aut_\ZZ(S)\cong\ZZ/2\ZZ$ 
in abundance,  confirming this part of Theorem \ref{thm:ct} (iii).
That is, for any $s\in\NN$, we construct a $2s$-dimensional family of non-isotrivial elliptic surfaces with $p_g=q=0$ admitting a cohomologically trivial involution.

Let $h\colon X\rightarrow B=\PP^1$ be a rational elliptic surface with a 2-torsion section $P$ and a single type $\I_8$ reducible fibre $h^*\infty = \sum_{i=0}^7\Theta_k$ at $\infty$ such that $P$ intersects $\Theta_4$. Such $X$ depends on one parameter up to the standard action by $\C^*\times\GL(2,\C)$;
to see this, just use the Weierstrass form 
\begin{equation}
\label{eq: 8211}
S\colon\quad y^2 = x(x^2+2a_2x+1)
\end{equation}
where $a_2\in\C[t]$ has degree two,
 the $\I_8$ fibre has been located at $\infty$, and $P=(0,0)$.

Let $\tB\rightarrow B$ be a double cover branched at $\infty$ and above $2s-1$ points $b'_2,\hdots,b'_{2s}$
outside the zeros of the discriminant locus given by $a_2^2-1$. 

Applying Construction~\ref{const: log transform}, we obtain a $2s$-dimensional family
of non-isotrivial elliptic surfaces $f\colon S\rightarrow B$ with an involution $\varphi = t_S(P)$ induced by $P\in \MW(X)$. 
 By 
 Proposition \ref{prop:G=triv}, 
 $\varphi$ is numerically trivial.

There are $2s$ double fibres of $f$:
\begin{itemize}
\item
at $\infty$, there 
is the fibre $F_1=2F_1'$ whose support is of type $F_1'=\I_8$;
\item
at $b_2',\hdots,b'_{2s}$, the fibre $F_i=2F_i'$ has smooth support ($F_i'=\I_0$)
\end{itemize}
By the canonical bundle formula,
$$
K_S = -F + F'_1+\hdots+F'_{2s}.
$$
where $F$ is any fibre of $f$. The invariants of $S$ can be readily computed:
\[
p_g(S)=q(S)=0, \quad h^0(2K_S)=2s-1
\]
If $s>1$, $S$ is a properly elliptic surface; if $s=1$, $S$ is the Enriques surface first studied by Barth--Peters \cite{BP83}
(as can be inferred from Figure \ref{fig}).

\begin{prop}
\label{prop: coh trivial inv 1}
\label{prop:2}
The involution $\varphi$  is cohomologically trivial.
\end{prop}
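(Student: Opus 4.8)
The plan is to take the numerical triviality of $\varphi$ as given (it was just established via Proposition~\ref{prop:G=triv}, since all fibres of $f$ are semi-stable or multiples of smooth elliptic curves) and to upgrade it to cohomological triviality by controlling the torsion of $H^2(S,\ZZ)$ together with a possible ``unipotent'' contribution. Since $p_g(S)=q(S)=0$, the exponential sequence gives an isomorphism $\Pic(S)\xrightarrow{\sim}H^2(S,\ZZ)$ (including torsion), so it suffices to show $\varphi^*=\id$ on $H^2(S,\ZZ)=\Pic(S)$ and on $H^3(S,\ZZ)$: the groups $H^0,H^4$ carry the trivial action and $H^1(S,\ZZ)=0$ because $b_1(S)=0$ and $H_1(S,\ZZ)$ is torsion. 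For $H^3(S,\ZZ)\cong H_1(S,\ZZ)\cong(\ZZ/2\ZZ)^{2s-1}$, I would note that $\varphi=t_S(P)$ preserves every fibre of $f$ by Theorem~\ref{thm:bc}(3), hence fixes each half-fibre $F'_i$ and thus each generator $\gamma_i$ of $H_1(S,\ZZ)$; this is exactly the first lemma of Section~\ref{s:prel} applied to $\varphi$.

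The substance lies in $H^2(S,\ZZ)=\Pic(S)$. Numerical triviality already yields $\varphi^*=\id$ on the free quotient $\Num(S)=\Pic(S)/\Tors$. For the torsion subgroup $T:=\Tors(\Pic(S))\cong(\ZZ/2\ZZ)^{2s-1}$, I would use that $T$ is generated by the $2$-torsion classes $[F'_i-F'_j]$ (each $2F'_i\sim F$ is a full fibre) and that $\varphi$ fixes every $F'_i$ as a divisor, whence $\varphi^*=\id$ on $T$. The difficulty is that triviality on the sub $T$ and on the quotient $\Num(S)$ does \emph{not} force triviality on $\Pic(S)$: writing $\varphi^*=\id+\delta$ with $\delta\colon\Num(S)\to T$ a homomorphism, the relation $\varphi^2=\id$ only gives $2\delta=0$, which is vacuous on the $2$-group $T$. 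Concretely, $\Num(S)$ contains the unimodular sublattice $U=\langle F'_1,O_S\rangle$ (where $O_S=\varpi(\pi^{-1}(O))$ is the bisection, so $O_S.F'_1=1$ and $\det U=-1$), which splits off an orthogonal complement built from the root lattices of the reducible fibres (notably $A_7$ from the ${}_2\I_8$ fibre) together with one gluing vector $u$ as displayed in Figure~\ref{fig}; the whole point is to show $\varphi^*u=u$ in $\Pic(S)$.

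The key geometric input I would exploit is a second description of $\varphi$: since $(P,O)\equiv(O,P)\pmod{\Delta_G}$ in $(G\times G)/\Delta_G$ (as $2P=O$), the automorphism $\varphi$ is also induced by $(\id_X,\sigma_P)$, i.e.\ by the deck transformation of the base change $\tX\to X$. This deck transformation fixes, as subvarieties, the zero section and every section $\widetilde Q$ of $\tilde h$ (it merely reparametrises their graphs through $\sigma_P$), as well as every fibre component pulled back from $X$. Consequently $\varphi$ fixes $O_S$, every image $Q_S$, and every fibre component of $f$ as a \emph{curve}, so $\varphi^*$ fixes all these classes exactly in $\Pic(S)$, not merely numerically. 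Pulling $u$ back along the \'etale double cover $\varpi\colon\tX\to S$ and using $\varpi^*\varphi^*=(\id_X,\sigma_P)^*\varpi^*$ together with the torsion-freeness of $H^2(\tX,\ZZ)$ (valid since $\tX$ is jacobian with a singular fibre), I would deduce that $\eta:=\varphi^*u-u$ lies in $\ker\bigl(\varpi^*\colon\Pic(S)\to\Pic(\tX)\bigr)$, a group of order at most $2$ generated by the class $\epsilon$ defining $\varpi$. By Lemma~\ref{lem: Num generators} the classes $O_S,Q_S$, the fibre components and the $F'_i$ generate $\Num(S)_\QQ$, so this reduces the entire obstruction to the single residual class $\eta\in\{0,\epsilon\}$.

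Deciding that $\eta=0$ is the main obstacle. I would settle it by the rigidity technique already used in the isotrivial $r=4$ case (Claim~\ref{claim:DnotD}): choose the minimal $N$ for which $D:=u+O_S+\Theta_6+NF'_1$ is effective, so that $|D|=\{D\}$, and decompose $D=C+\sum_i\Theta_i$ with $C$ an irreducible bisection ($D.F=2$). Since $\varphi$ fixes each $\Theta_i$, the equality $\varphi^*u=u$ is then equivalent to $\varphi(C)=C$, which I would verify by comparing $C$ with $\Fix(\varphi)$: because $P$ is $2$-torsion, $\varphi$ acts \emph{freely} on every smooth fibre, so $\Fix(\varphi)$ is concentrated on the reducible and multiple fibres, and an intersection-number bookkeeping of $C$ against $O_S$, the $F'_i$ and the fixed fibre components (as encoded in Figure~\ref{fig}) forces $\varphi(C)=C$. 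The decisive contrast with the $r=4$ situation---where the analogous residual class was nonzero---is that here all fibres are semi-stable, so $t_S(P)$ is purely ``rotational'' on the $\I$-type fibres and carries no $\mathbb{G}_m$-component that could obstruct the equality; this is precisely what makes $\eta=0$ and hence $\varphi$ cohomologically trivial. I expect the genuinely delicate part of the write-up to be this last torsion bookkeeping, namely confirming that the explicit $\varphi$-fixed classes together with $T$ generate $\Pic(S)=H^2(S,\ZZ)$ \emph{integrally}.
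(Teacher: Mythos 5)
Your strategy is the same as the paper's: take numerical triviality from Proposition \ref{prop:G=triv} as input, exhibit an explicit set of integral generators of $\Pic(S)$ consisting of curves fixed by $\varphi$, and deal with the one remaining ``gluing'' class $D$ by choosing the minimal $N$ with $D+NF'$ effective (so that $|D+NF'|=\{D+NF'\}$), extracting an irreducible bisection $C$, and forcing $\varphi(C)=C$ by comparing $C.\varphi(C)$ with $\#\bigl(C\cap\Fix(\varphi)\bigr)$. Two of your intermediate devices are off, though not fatally so: the auxiliary divisor $u+O_S+\Theta_6+NF_1'$ belongs to the isotrivial $r=4$ situation with fibres $\III^*+\III$ and lattice $U\oplus A_1\oplus E_7$, whereas here the relevant fibre is $\I_8$ and the classes to be tested are $\frac12D_1=\Theta_4^\vee$ and $\frac12D_2=\Theta_2^\vee$ (plus $\frac14D_3$, to be excluded); and the detour through $\varpi^*\colon\Pic(S)\to\Pic(\tX)$ does not deliver $\eta\in\ker(\varpi^*)$, because for $s>1$ one has $q(\tX)=s-1>0$, so $\Pic(\tX)$ has abundant $2$-torsion inside $\Pic^0(\tX)$ and the torsion-freeness of $H^2(\tX,\ZZ)$ only shows $\varpi^*\eta\in\Pic^0(\tX)[2]$, not $\varpi^*\eta=0$. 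Both points are superseded by your final geometric argument, which is the paper's.

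The genuine gap is in that final argument, which you leave as ``intersection-number bookkeeping'' and whose key ingredient your sketch actually contradicts. You write that ``$\varphi$ acts freely on every smooth fibre''; but the half-fibres $F_2',\dots,F_{2s}'$ are smooth elliptic curves, and on them $\varphi$ is the \emph{identity}: since $\varphi$ is also induced by $(\id_X,\sigma_P)$ and $\sigma_P$ fixes the $2s-1$ ramification points of $\tB\to B$ away from $\infty$, each $F_i'$ ($i\geq2$) lies pointwise in $\Fix(\varphi)$. This is exactly what makes the comparison close: the bisection $C$ meets each of these $2s-1$ curves, so if $\varphi(C)\neq C$ then $C.\varphi(C)\geq\#\bigl(C\cap\Fix(\varphi)\bigr)\geq 2s-1$, while numerical triviality and the minimality of $N\leq s-1$ give $C.\varphi(C)=C^2=2N\leq 2s-2$, a contradiction. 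Without identifying the pointwise-fixed smooth half-fibres and the bound $N\leq s-1$, the comparison with the fixed locus has nothing to bite on, so as written the proposal does not yet prove the statement.
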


\begin{proof}
The construction endows us with 10 curves on $S$ with square $(-2)$:
\begin{itemize}
\item
the eight components $\Theta_0,\hdots,\Theta_7$ of the $2\I_8$ fibre,
numbered cyclically such that 
\item
the bisection $O_S$ which is the image of $O$ and $P$ meets $\Theta_0$, and
\item
another bisection $R_S$
which is the image of the missing generator $R$ of $\MW(X)$ 
and which will be shown to meet $\Theta_4$.
\end{itemize}
The section $R$ on $X$ has height $1/2$ by \cite[Table 8.2]{SS19}, and is
thus of infinite order. In the degenerate case with an additional reducible fibre, of type $\I_2$,
it becomes a torsion section of order $4$ such that $2R=P$.
Explicitly, $R=(u,v)$ is determined, up to sign and translation by $P$, 
by the $x$-coordinate $u\in \C$
such that the right-hand side of \eqref{eq: 8211} is a perfect square in $\C[t]$ (namely $v^2$).
In particular, the construction verifies that 
$R$ meets 
$\Theta_2$ on $X$ (and $\Theta_4$ on $\tX$, but there it's the fourth component out of 16).

Note also that $O,\, P,\, R$ and $P+R$ are all disjoint on $X$, and so are $O_S, R_S$ on $S$, 
but they are not smooth rational if $s>1$
(since they map $2:1$ to the base curve $\PP^1$ with $2s$ ramification points), 
so we better not call them $(-2)$-curves.
We depict these 10 curves in the following diagram:

\begin{figure}[ht!]
\setlength{\unitlength}{.6in}
\begin{picture}(4,2.5)(0,-.2)
\thicklines


\multiput(1,0)(1,0){3}{\circle*{.1}}
\multiput(1,2)(1,0){3}{\circle*{.1}}
\multiput(0,1)(1,0){2}{\circle*{.1}}
\multiput(3,1)(1,0){2}{\circle*{.1}}


\put(1,0){\line(1,0){2}}
\put(1,2){\line(1,0){2}}
\put(0,1){\line(1,0){1}}
\put(3,1){\line(1,0){1}}
\put(1,2){\line(0,-1){2}}
\put(3,2){\line(0,-1){2}}

\put(-.2,.75){$O_S$}
\put(4,.75){$R_S$}
\put(3.05,.75){$\Theta_4$}
\put(1.05,.75){$\Theta_0$}
\put(1.05,-.25){$\Theta_7$}
\put(2.05,-.25){$\Theta_6$}
\put(3.05,-.25){$\Theta_5$}
\put(1,2.1){$\Theta_1$}
\put(2,2.1){$\Theta_2$}
\put(3,2.1){$\Theta_3$}

\end{picture}
\caption{Configurations of ten curves, forming a $\QQ$-basis of $\Num(S)$}
\label{Fig:R}
\label{fig}
\end{figure}

Together these 10 curves generate an index 4 sublattice $L$ of $\Num(S)$ 
(by inspection the determinant $-16$ of their Gram matrix
since $\Num(S)$ is unimodular).\footnote{ Alternatively, consider the auxiliary lattice $L_0$ generated by the bisection $O_S$ and the fibre components $\Theta_i$.
Then $L_0\cong U \oplus A_7$ has determinant $8$,
and we can mimic the approach from the theory of Mordell--Weil lattices by applying the orthogonal projection 
$\pi: L\otimes\QQ \to (L_0\otimes \QQ)^\perp$ to get
$\pi(R_S) = R_S-O_S-2E-a_4^\vee$
where $a_4^\vee$ denotes the dual vector of the middle vertex in the Dynkin diagram $A_7$.
Then $\pi(R_S)^2=-2$, yielding $\det(L) = \det(L_0)\cdot\pi(R_S)^2=-16$.}
For a $\ZZ$-basis of  $\Num(S)$, we would like to
complement them by fractions of two isotropic vectors 
which are motivated from the Enriques surface case $s=1$:
\begin{eqnarray*}
D_1 & = & 2O_S + 4 \Theta_0+3(\Theta_1+\Theta_7)+2(\Theta_2+\Theta_6)+\Theta_3+\Theta_5 \;\; (\text{type } \III^*),\\
D_2 & = & O_S + \Theta_1+ 2 (\Theta_0+\Theta_7+\Theta_6+\Theta_5+\Theta_4)+\Theta_3+R_S\;\; (\text{type } \I_4^*).
\end{eqnarray*}
More precisely, in the Enriques surface case,
each divisor is of given Kodaira type.
Hence it induces an alternative elliptic fibration $S\to\PP^1$ which, a priori, features the divisor as singular fibre or as half-pencil.
The latter alternative, however, is excluded since multiple fibres are only supported on types $\I_n\, (n\geq 0)$;
therefore, each $D_i$ is 2-divisible, and we obtain $\Num(S)$ as desired in the Enriques surface case.
We now verify the analogous statement for arbitrary $s$:

\begin{lemma}
\label{lem:D/2}
For any $s\geq 1$, $\Num(S)$ is obtained from $L$ by adjoining the dual vectors
$\frac 12 D_1 = \Theta_4^\vee, \frac 12 D_2=\Theta_2^\vee$.
\end{lemma}

The proof of the lemma will require a bit of work.
In fact, it will also pave the way to deduce that $\varphi$ is cohomologically trivial.

Since $\Num(S)$ is a unimodular overlattice of $L$,
it is encoded in an isotropic subgroup of the discriminant group $A_L = L^\vee/L$
of size $4$.
We start by exhibiting generators for $A_L$.
For this purposes, it suffices to note  the isotropic divisor
\[
D_3 = O_S + 2 \Theta_0+3\Theta_1+4\Theta_2+5\Theta_3+6\Theta_4+4\Theta_5+2\Theta_6+3R_S= 4\Theta_7^\vee.
\]
Note that the configuration is that of a divisor of Kodaira type $\II^*$,
so by the same argument as above, it would be 2-divisible on an Enriques surface, but certainly not 4-divisible.
Apparently, we get a vector $\frac 14 D_3$ of order 4 in $L^\vee/L$,
and another one by applying the horizontal  symmetry of Figure \ref{fig}:
\[
D_3' = O_S + 2 \Theta_0+3\Theta_7+4\Theta_6+5\Theta_5+6\Theta_4+4\Theta_3+2\Theta_2+3R_S = 4\Theta_1^\vee.
\]
By inspection of the coefficients of $\Theta_3$ and $\Theta_5$, the vectors $v=\frac 14 D_3, v'=\frac 14 D_3'$ 
generate $A_L\cong(\ZZ/4\ZZ)^2$
with intersection form $U(3/4)$, the hyperbolic plane scaled by $3/4$.
It is then an easy exercise to check that the only isotropic vectors mod $2\ZZ$
(for $\Num(S)$ is even by the adjunction formula) are $\pm v, \pm v'$ and the 2-torsion elements 
$$2v\equiv\frac 12 D_3 \equiv \frac 12 D_2 \; (\mathrm{mod} \, L), \;\; 2v', \;\;
2(v+v')\equiv \frac 12 D_1 \; (\mathrm{mod} \, L).
$$
For symmetry reasons, it thus suffices to inspect the following two cases:
\begin{eqnarray}
\label{eq:2Num's}
\Num(S) =\left \langle L, \frac 12 D_1, \frac 12 D_2\right\rangle \;\;\; \text{ or } \;\;\;  \left\langle L, \frac 14 D_3\right\rangle.
\end{eqnarray}
We now focus on the isotropic vectors adjoined to $L$,
to  rule out the second alternative
and eventually verify that  $\varphi$ is cohomologically trivial.
For this purpose, we let
\[
D = \frac 12 D_1, \frac 12 D_2 \;\; (\text{in the first case) \;  or }\;\;  \frac 14 D_3  \;\; (\text{in the second case}).
\]
We start by modifying $D$ to become effective.
To this end, 
let $N\in \N_0$,
write $F'=F_1'$ for ease of notation (any other $F_i'$ would work equally well), and consider
\[
\hat D = \frac 12 D_1 + NF'.
\]
Since $\hat D.F'=D.F' = 1$, 
it follows from Riemann--Roch that 
$$\chi(\hat D) = 1 + (N-(s-1))/2.
$$
Hence $\hat D\geq 0$ for all $N\geq s-1$ 
(otherwise $-\hat D\geq 0$, but then $(-\hat D.F')=-1$ contradicting that $F'$ is nef).
Fixing the minimal such $N$, this satisfies 
\begin{eqnarray}
\label{eq:s-1}
N\leq s-1.
\end{eqnarray}

\begin{claim}
\label{claim:C}
$\hat D = C$ for some irreducible curve $C\subset S$.
\end{claim}

\begin{proof}[Proof of Claim \ref{claim:C}]
Writing $\hat D = C_1+\hdots+C_r$ for irreducible curves $C_i\subset S$,
we have to show that $r=1$. 
Since $\hat D.F'=1$, we may assume without loss of generality that $C_1$ is a bisection
while the other $C_i$ are fibre components.
Since $C_i=F'$ or $F$ are excluded by the minimality assumption for $N$,
we can only have $C_2=\Theta_j$ for some component of the $\I_8$ fibre.
Regardless of $j$,
$\hat D-\Theta_j\geq 0$ implies that $(\Theta_{j-1}.(\hat D-\Theta_j))<0$ or $(\Theta_{j+1}.(\hat D-\Theta_j))<0$,
since $D$ was defined as dual vector of some $\Theta_l$.
Therefore $\Theta_{j-1}$ or $\Theta_{j+1}$ is contained in the support of $\hat D$.
But
this shows successively that $F'\subset$ supp$(\hat D)$ -- which we excluded before.
Hence $r=1$ as claimed.
\end{proof}

\begin{claim}
\label{claim:phi(C)}
$\varphi(C)=C$.
\end{claim}

\begin{proof}[Proof of Claim \ref{claim:phi(C)}]
Note that $C^2=2N$ by definition.
Hence 
\[
C.\varphi(C)=2N,
\]
since $\varphi$ is numerically trivial.
Assuming that $\varphi(C)=C'\neq C$, we can compare this intersection number against
the number of intersection points forced upon $C$ and $C'$ by the geometry of $S$.

To this end, consider the precise action of the involution on the fibre components.
Indeed, the $2s-1$ ramified smooth fibres are fixed pointwise.
It follows that $C$ meets $S^\varphi$ in at least $2s-1$ points,
so
\[
C.C'\geq 2s-1>2N = C.C'.
\]
This gives the required contradiction to \eqref{eq:s-1} and thus completes the proof that $\varphi(C)=C$.
\end{proof}

\subsection{Completion of proof of Lemma \ref{lem:D/2}}
\label{ss:pf-lem}

Having analysed the action of $\varphi$, we come back to the proof of Lemma \ref{lem:D/2}.
For this purpose, we use the action of $\varphi$ on the ramified $\I_{8}$ fibre.
One checks that the action alternates between 
\begin{itemize}
\item
fixing a component pointwise, starting from the component $\Theta_0$ met by the bisection $O_S$, 
and continued with $\Theta_2, \Theta_4, \Theta_6$, and
\item
acting as on involution on $\Theta_1, \Theta_3, \Theta_5, \Theta_7$, only fixing 
the two nodes where the adjacent fibre components are intersected.
\end{itemize}
Explicitly, this can be seen by considering $\iota$ on $\tX$, since both $\iota$ and $t_P$ induce the same involution on $S$.
Now, for $D=\frac 12 D_1$, the curve $C$ meets $\Theta_4$, and for  $D=\frac 12 D_2$, the curve $C$ meets $\Theta_2$;
both components are fixed pointwise by $\varphi$
which is compatible with the property that $C=\varphi(C)$.
For  $D=\frac 14 D_3$, however, the curve $C$ meets $\Theta_7$ which is not fixed pointwise by $\varphi$.
Since the bisection $C$ cannot meet the nodes, its intersection point with $\Theta_7$ is not fixed,
contradicting the conclusion that $\varphi(C)=C$.
Hence $\frac 14 D_3\not\in\Num(S)$.
By \eqref{eq:2Num's}, this completes the proof of Lemma \ref{lem:D/2}.
\qed

\subsection{Completion of proof of Proposition \ref{prop:2}}

We have deduced from Proposition \ref{prop:G=triv} that $\varphi\in\Aut_\QQ(S)$.
As it also acts trivially on the base of the elliptic fibration by construction,
it remains to verify that $\varphi$ acts trivially on the representatives of a basis of $\Num(S)$ in $\Pic(S)$.
For  a basis of the sublattice $L\subset\Num(S)$, this follows by construction, as verified in Proposition \ref{prop:G=triv}.
By Lemma \ref{lem:D/2}, there are two  remaining generators $D_1/2, D_2/2$.
On these, the trivial action has been established in Claim \ref{claim:phi(C)}.
This completes the proof of Proposition \ref{prop: coh trivial inv 1}.
\end{proof}


\begin{remark}
For an alternative infinite series of families of non-isotrivial properly elliptic surfaces $S$
with $|\Aut_\ZZ(S)|=2$, 
one could realize the isotrivial families for $r=4$ from Theorem \ref{thm:X_33}
by way of Construction \ref{constr} applied to $X_{33}$,
amounting to the examples from Subsection \ref{ss:r=4},
and then
deform (preserving $\Psi^2$)  $X_{33}$ to $X_{321}$.
\end{remark}


\section{ Cohomologically  trivial automorphisms in the non isotrivial case -- Proof of the order 3-statement of Theorem \ref{thm:ct}}
\label{ex: nonisotrivial AutZ=3}
\label{s:3}

In this section, we construct a 1-dimensional family of non-isotrivial properly elliptic surfaces with $q=p_g=0$ 
admitting a cohomologically trivial automorphism of order 3,
 proving the remaining part of Theorem \ref{thm:ct} (iii).

Let $X$ be the extremal rational elliptic surface with singular fibres of types  $\I_9$ and three times $\I_1$
(No.\ 63 in \cite{OS90} or \cite[Table 8.3]{SS19}).
This has $\MW(X)=\langle P\rangle \cong \ZZ/3\ZZ$ and can be given in Weierstrass form
\[
X: \;\; y^2 + txy + y = x^3.
\]

Upon numbering the components of the $\I_9$ fibre cyclically $\Theta_0,\hdots, \Theta_8$
such that the zero section $O$ meets $\Theta_0$, we may assume that $P$ meets $\Theta_3$.
This follows either from the abstract description 
$$\MW(X) \cong \Pic(X)/\Triv(X) = \Triv(S)'/\Triv(S)
$$
from Subsection \ref{ss:MW}
where the trivial lattice Triv$(X)$ is generated by fibre components and zero section
(in parallel to the argument around \eqref{eq:3-dual}),
or from considering explicit equations as in \cite{Bea82}, \cite{MP86} or \cite[Table 8.3]{SS19}.

Let $\bar\pi\colon \tB=\PP^1\rightarrow B=\PP^1$ be a cyclic cubic base change branched at $b_1$ and $b_2$ such that $h^*b_1$ is the $\I_9$ fibre and  $h^*b_2$ is  a smooth fibre $F$. 

Applying Construction~\ref{const: log transform} to the triple $(X,\, \langle P\rangle,\, \bar\pi)$, we obtain a properly elliptic surface $f\colon S\rightarrow B$ with $p_g=q=0$, together with an automorphism $\varphi\in \Aut_B(S)$ of order $3$. 
 By 
 Proposition \ref{prop:G=triv}, $\varphi=t_S(P)$ is numerically trivial.

\begin{prop}\label{prop: AutZ=3}
The automorphism $\varphi$ 
is cohomologically trivial.
\end{prop}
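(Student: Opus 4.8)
The plan is to reduce cohomological triviality to the assertion that $\varphi$ fixes a set of generators of $\Pic(S)$ as honest divisor classes, to dispose of all but one generator by exhibiting them as manifestly $\varphi$-invariant cycles, and then to settle the last, index-$3$, generator by a local analysis on the reducible fibre modelled on the computation in Theorem~\ref{r=3isotrivial}. For the reduction, note that $p_g(S)=q(S)=0$ forces $H^1(S,\ZZ)=0$ and an isomorphism $c_1\colon\Pic(S)\xrightarrow{\sim}H^2(S,\ZZ)$, so that $H^2(S,\ZZ)=\Pic(S)$; moreover $H^3(S,\ZZ)\cong H_1(S,\ZZ)$, which by the description of $H_1$ for an elliptic surface with a non-multiple singular fibre (Subsection~2.3), applied to the two multiple fibres of multiplicity $3$, equals $\ZZ/3\ZZ\cong\Tors\Pic(S)$. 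Since the universal-coefficient identification of $\Tors H^2$ with $\Tors H_1$ is natural, triviality of $\varphi^*$ on $\Tors H^2(S,\ZZ)$ already forces it on $H^3(S,\ZZ)$. Hence $\varphi$ is cohomologically trivial \emph{if and only if} $\varphi^*=\id$ on $\Pic(S)$.

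Next I would identify the generators that $\varphi$ fixes for free. Being $t_S(P)$, the map $\varphi$ preserves every fibre and, by Proposition~\ref{prop:G=triv} and the construction, fixes each component $\Theta_0,\dots,\Theta_8$ of the $\phantom{}_3\I_9$ fibre $F_1'$ as well as the smooth support $F_2'$ of the $\phantom{}_3\I_0$ fibre; thus it fixes these as divisors, and therefore fixes the torsion class $\tau_0:=F_1'-F_2'$ generating $\Tors\Pic(S)$. It also fixes the multisection $O_S=\varpi(\pi^{-1}(O))$: lifting $\varphi$ to translation by $\tilde P$ on $\tX$ cyclically permutes the three sections $\tilde O,\tilde P,\widetilde{2P}$ whose common $\varpi$-image is exactly $O_S$. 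Since $\MW(X)$ is finite, Lemma~\ref{lem: Num generators} shows these classes span $\Num(S)_\QQ$; a Gram-determinant computation gives that $O_S,\Theta_0,\dots,\Theta_8$ generate a sublattice of discriminant $9$, hence of index $3$ in the unimodular rank-$10$ lattice $\Num(S)$. Consequently, writing $\varphi^*=\id+\psi$ with $\psi\colon\Num(S)\to\Tors\Pic(S)$ the resulting defect homomorphism, everything reduces to showing $\psi(v)=0$ for a single overlattice generator $v$ with $3v\in\langle O_S,\Theta_i\rangle$.

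The crux is this last vanishing. Following Lemma~\ref{lem:D=D} and the order-$2$ argument, I would represent $v$ by an auxiliary divisor $D=v'+NF_1'$, where $v'=v+O_S$ is horizontal and $N$ is chosen minimal so that $D\geq0$ and $|D|=\{D\}$, then peel off the vertical components to obtain an irreducible trisection $C$; with this normalization $\varphi(D)=D$ is equivalent to $\psi(v)=0$. Because $\varphi$ is numerically trivial one has $C\cdot\varphi(C)=C^2$, and assuming $\varphi(C)\neq C$ one compares this with the intersections forced at the fixed locus of $\varphi$, which contains $F_2'$ (fixed pointwise) together with the nodes and fixed components of $F_1'$ governed by Lemma~\ref{lem: mIk}. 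The decisive input — exactly as in Claim~\ref{claim:Psi(D)} — is the local behaviour of $\varphi$ where $C$ meets the invariant reducible fibre: in suitable coordinates $\varphi$ acts on the transverse direction by a primitive cube root of unity, and tracking the induced intersection multiplicities forces $\varphi(C)=C$. This is the \emph{opposite} outcome to the isotrivial case of Theorem~\ref{r=3isotrivial}, and it is precisely this invariance that makes $S$ carry a cohomologically trivial automorphism of order $3$.

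I expect this final local step to be the main obstacle. Pinning down the exact action of $\varphi$ on the components and tangent spaces of the $\phantom{}_3\I_9$ fibre — and hence its effect on the index-$3$ overlattice generator — rests on the explicit local model of the algebraic log-transform at the multiple fibre, and converting that model into the clean statement $\varphi(C)=C$ requires care; by contrast, the lattice-theoretic reduction and the verification that $\varphi$ fixes $O_S,\Theta_i,F_2'$ and $\tau_0$ are routine once the construction is in hand.
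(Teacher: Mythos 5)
Your overall architecture coincides with the paper's: reduce cohomological triviality to the action on $\Pic(S)$, observe that $\varphi$ fixes the fibre components, the trisection (the paper's $B$, your $O_S$) and the torsion class, and then handle the single index-$3$ overlattice generator by representing it by an effective divisor with irreducible horizontal part and testing $\varphi$-invariance against the fixed locus. Up to and including the lattice bookkeeping this is sound (your sublattice $\langle O_S,\Theta_0,\dots,\Theta_8\rangle$ is the paper's $\langle E,B\rangle\oplus A_8$, since $\Theta_0+\cdots+\Theta_8\equiv E$ in $\Num(S)$), and the reduction of the $H^3$-action to the torsion of $H^2$ via naturality of universal coefficients is a legitimate variant of the paper's appeal to triviality on $H_1$.

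The gap is in the decisive step. You correctly set up the comparison of $C\cdot\varphi(C)=C^2$ with the intersections forced by $\mathrm{Fix}(\varphi)$, but then declare that the real input is a local tangent-space analysis modelled on Claim \ref{claim:Psi(D)}, asserting that ``tracking the induced intersection multiplicities forces $\varphi(C)=C$''. That mechanism cannot deliver the conclusion: the local computation of Claim \ref{claim:Psi(D)} is used in Theorem \ref{r=3isotrivial} to show that an \emph{invariant} curve would have excessive tangency with a fixed component, i.e.\ it is an argument \emph{against} invariance, and no analysis of local multiplicities at one fibre can force a curve to be globally $\varphi$-invariant. What closes the argument in the paper is a purely global, two-line count: with $D=\Theta_3^\vee+B+3E$ one computes $D^2=1$, shows $|D|=\{C_1\}$ with $C_1$ irreducible, and observes that $C_1$ meets the pointwise-fixed curves $E$ and $\Theta_3$ in two distinct points (as $D\cdot E=D\cdot\Theta_3=1$); if $\varphi(C_1)\neq C_1$ these two points lie on $C_1\cap\varphi(C_1)$, whence $C_1\cdot\varphi(C_1)\geq 2>1=C_1^2$, a contradiction. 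The whole content is the inequality $C^2<\#\bigl(C\cap\mathrm{Fix}(\varphi)\bigr)$, which you never verify for your normalization $D=v+O_S+NF_1'$: one must check that the minimal $N$ with $D\geq 0$ keeps $D^2\leq 1$ while $D$ still meets both $E$ and a pointwise-fixed component of the $\I_9$ fibre. Without that verification, and with the invoked local argument pointing in the wrong logical direction, the proof of the key claim $\varphi(C)=C$ is missing.
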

\begin{proof}
First note that the elliptic fibration $f\colon S\rightarrow B$ has two triple fibres, of types $3\I_0$ and $3\I_9$ respectively, and a smooth rational trisection $B$ which is the image of $O$ (or $P$, or $-P$).  By construction, we have $B^2=-3$.

The fixed locus $S^\varphi$ consists of
\begin{itemize}
\item
the smooth genus one curve $E$ supporting the $3\I_0$ fibre,
\item
the components $\Theta_0, \Theta_3, \Theta_6$ of $\I_9$, numbered cyclically, as before, such that $B$ meets $\Theta_0$,
\item
the 3 intersection points $\Theta_i\cap\Theta_{i+1}$ for $i=1,4,7$,
\item
the 3 nodes of the (simple) $\I_1$ fibres.
\end{itemize}

From 
\cite[Lemma 2.5]{CFGLS24},
we get that 
$$\ZZ/3\ZZ \cong H_1(S,\ZZ) \cong H^2(S,\ZZ)_{tor} = \Pic(S)_{tor}$$
is generated by $E-D'$ 
where $D'=\Theta_0+\hdots+\Theta_8$ is the sum of the fibre components of the reduced $\I_9$ fibre.
Note that $K_S = -F + 2 E + 2 D'$ where $F$ denotes any fibre.

To see that $\varphi$ acts cohomologically trivially, 
since it acts trivially on the first homology,
we need generators of $\Pic(S)$.
To this end, we switch to the Severi  lattice $\Num(S)$
and consider the standard index 3 sublattice $L$,
\begin{eqnarray}
\label{eq:Num}
\Num(S) \supset \langle E, B\rangle \oplus \langle \Theta_1,\hdots\Theta_8\rangle \cong \begin{pmatrix}
0 & 1\\1 & -3\end{pmatrix} \oplus A_8 = L
\end{eqnarray}
 which parallels the trivial lattice of a jacobian elliptic surface reviewed in Subsection \ref{ss:MW}.
Note that integral overlattices of $L$ are in bijective correspondence with isotropic subgroups of the discriminant group $L^\vee/L$.
Presently, this group is isomorphic to $A_8^\vee/A_8 = \langle\Theta_1^\vee\rangle\cong\ZZ/9\ZZ$,
with generator the dual vector $\Theta_1^\vee$ of square $-8/9$.
Since $\Num(S)$ is unimodular, we infer that it is obtained from $L$ by adjoining the class of 
the dual vector 
\begin{equation}
\label{eq:3-dual}
D_0=\Theta_3^\vee
= -\frac 13(2\Theta_1+4\Theta_2+6\Theta_3+5\Theta_4+4\Theta_5+3\Theta_6+2\Theta_7+\Theta_8);
\end{equation}
indeed,  $3D_0\in L$ and $D_0^2=-2$, confirming the claim.
Note that $D_0$ has three lifts in $\Pic(S)$ -- which we can treat all alike (!),
so we shall just write $D_0$ for some lift in $\Pic(S)$ or for the class in $\Num(S)$
without differentiating. Indeed:

\begin{claim}
Independent of the chosen lift, $D=D_0+B+3E$ is effective.
\end{claim}

\begin{proof}
Since $D^2=D.K_S = D.E = 1$,
Riemann--Roch gives $\chi(D)=1$, so $h^0(D)+h^0(K_S-D)\geq 1$.
But then $E.(K_S-D)=-1$, so $K_S-D$ cannot be effective,
since $E$ is nef. Hence $D>0$ as claimed.
\end{proof}

As a consequence,
we can write
\[
D = C_1+\hdots+C_r
\]
for irreducible curves $C_i\subset S$.
Since $D.E=1$, we may assume that $C_1.E=1$ (so $C_1$ is another trisection of the elliptic fibration),
while the $C_i$ are fibre components.

\begin{claim}
$r=1$, i.e.\ $D=C_1$.
\end{claim}

\begin{proof}
If $D-E\geq 0$, then we use $(D-E).B=-1$
to infer that $B\subset$ supp$(D)$,
but this is only possible if $C_1=B$.
It follows that $D$, and thus also $D_0$, is supported on fibre components and on $B$,
i.e.\ on the index 3 sublattice of $\Num(S)$ from \eqref{eq:Num}.
But this contradicts the choice of $D_0$; hence $E\not\subset$ supp$(D)$.

Assume that $\Theta_i\subset$ supp$(D)$ for some $i\in\{0,\hdots,8\}$.
Then $\Theta_{i-1}.(D-\Theta_i)<0$ or $\Theta_{i+1}.(D-\Theta_i)<0$,
and we can infer successively that each $\Theta_j$ is contained in supp$(D)$.
But this gives $D-D'\geq 0$, with the same contradiction as before.

Obviously this argument also covers the case $F\subset$ supp$(D)$ for any simple fibre $F$,
so the claim follows.
\end{proof}

To complete the proof of Proposition~\ref{prop: AutZ=3},
it remains to verify that $\varphi(C_1)=C_1$,
for then there is a set of generators of $\Pic(S)$ each of which is fixed by $\varphi$.

Assume to the contrary that
$\varphi(C_1) = C'\neq C_1$.
Then $C_1.C'=1$ by the invariance of intersection numbers.
On the other hand, 
\[
C_1\cap \varphi(C_1) \; \supset  \; C_1 \cap \mbox{Fix}(\varphi).
\]
By construction, $C_1$ not only intersects $E\subset \mbox{Fix}(\varphi)$ non-trivially,
but also $\Theta_3\subset \mbox{Fix}(\varphi)$. Hence $\# (C_1\cap \varphi(C_1))\geq 2$,
contradicting the fact that $C_1.C'=1$.
It follows that $\varphi(C_1) = C_1$ as required.
This completes the proof of Proposition~\ref{prop: AutZ=3}.
\end{proof}

\begin{summary}
With Propositions \ref{prop:2}, \ref{prop: AutZ=3} in place, the proof of Theorem \ref{thm:ct} is complete.
\qed
\end{summary}

\begin{rmk}
With more triple fibres with smooth support,
$\varphi$ remains to be numerically trivial,
but it is not obvious to decide whether it is cohomologically trivial: since multiple fibres $3E$ appear as $2E$ in the canonical divisor,
the intersection numbers go up more quickly, and the argument from the proof of Proposition~\ref{prop: AutZ=3} using $C\cap S^\varphi$ 
is not decisive.
\end{rmk}

\section{Cohomologically trivial automorphisms in the isotrivial case -- Proof of Theorem \ref{thm:iso}}
\label{s:iso2}

Throughout this section up to Corollary \ref{cor:non-ss},  we assume that 
the properly elliptic surface $S\to B$ with $\chi(S)>0$  is isotrivial.
If $p_g(S)>0$, then Corollary \ref{cor: p_g>0 isotrivial}
implies that $\Aut_\ZZ(S)\subset\Aut_\QQ(S)$ is trivial.

To prove the bound of Theorem \ref{thm:iso}, it thus remains to discuss the case $p_g(S)=0$,
i.e. the case where \ $J(S)$ is rational.
Then Corollary \ref{cor: pg=0 Aut_Z} shows that 
$$\Aut_\ZZ(S) = \Aut_{B,\ZZ}(S)\subset\Aut_{B,\QQ}(S).
$$
The bound $|\Aut_\ZZ(S)|\leq 3$  thus follows from Proposition \ref{prop: pg=0 bd J(S)} (2) and (3) (b)
outside the  special case where $J(S)= X_{33} \ (r=4)$. 

This is covered by the following result:

\begin{theorem}
\label{thm:X_33}
Assume that $J(S)=X_{33}$ and that $\Aut_{B,\QQ}(S)\cong\mu_4$.
Then $\Aut_\ZZ(S)\cong\mu_2$.
\end{theorem}

\begin{proof}

Since $\Aut_{B,\QQ}(S)\cong\mu_4$, we are necessarily in the overall setting of the proof of 
Theorem 
\ref{prop:X_33}, but where  the action on the base is trivial.
Indeed, as noted in the proof of Theorem 
\ref{prop:X_33},
we have an automorphism  $\Psi\in\Aut_{B,\QQ}(S)$ of order $4$ if and only if there is an even number $s=2m$ of double fibres.

To decide whether $\Psi\in\Aut_\ZZ(S)$, we have to take a closer look at the configuration 
of curves on $S$ in order to determine (effective) generators of $\Pic(S)$.

To this end, we supplement the two reducible fibres of types $\III, \III^*$ and the $s$ 
double fibres with two natural bisections $O_S, R_S$ induced by the 2-torsion of $E$.
On the singular model $Y$, the corresponding curve $R_Y$ meets exactly the singular points of type $A_1$
(induced by the 2-torsion points $\frac 12, \frac i2$), and $O_Y$ meets the other two singularities on each fibre, induced by $0$ and $t_0$.
On $S$, it follows that $O_S$ meets the $\III^*$ fibre transversally in the simple terminal components $\Theta_0, \Theta_6$
(cf.\ Figure \ref{Fig:add}),
and $\III$ in one point (different from the node) on each component $C_1, C_3$,
while $R_S$ intersects $\III$ in the node (which $C_2$ was contracted to, cf.\ Figure \ref{fig:exc}) 
and $\III^*$ in the central double component $\Theta_7$, as depicted in the following dual graph:

\begin{figure}[ht!]
\setlength{\unitlength}{.45in}
\begin{picture}(9,3.7)(0,-.7)
\thicklines


\multiput(8,1)(1,0){2}{\circle*{.1}}
\multiput(3,2)(0,1){2}{\circle*{.1}}
\multiput(0,1)(1,0){7}{\circle*{.1}}

\multiput(3,1)(1,0){2}{\circle*{.1}}
\put(3,-.5){\circle*{.1}}

\multiput(8,.95)(0,.1){2}{\line(1,0){1}}

\put(3,1){\line(0,1){2}}
\put(0,1){\line(1,0){6}}
\put(3,-.5){\line(2,1){3}}
\put(3,-.5){\line(-2,1){3}}
\put(3,-.5){\line(4,1){6}}
\put(3,-.5){\line(10,3){5}}

\put(3,3){\line(5,-2){5}}
\put(3,3){\line(3,-1){6}}

%
\put(0,1.15){$\Theta_0$}
\put(1,1.15){$\Theta_1$}
\put(2,1.15){$\Theta_2$}
\put(3.1,1.15){$\Theta_3$}
\put(4,1.15){$\Theta_4$}
\put(5,1.15){$\Theta_5$}
\put(5.75,1.15){$\Theta_6$}

\put(2.45,3){$R_S$}
\put(2.6,-.75){$O_S$}
\put(.1,1.8){$\III^*$}
\put(8.8,1.5){$\III$}
\put(7.85,.25){$C_1$}
\put(8.85,.25){$C_3$}

\put(3.1,1.85){$\Theta_7$}

\thinlines
\put(-.2,.8){\framebox(6.4,1.4){}}
\put(7.8,.6){\framebox(1.4,.8){}}

\end{picture}
\end{figure}

By construction, $\Psi$ preserves each multiple fibre
and acts trivially on the first homology group by Lemma \ref{lem2.1}
since $q(S)=0$.


The submultiple fibres  of both fibrations, and  the other
 fibre components  (exceptional over $Y$), on which $\Psi$ acts trivially,  do not  induce a system of generators  of $\Num(S) = H^2(S, \ZZ) / \Tors$,
 but generate an index two subgroup.

 To see this, 
 just take one of the submultiple fibres $F'_1,\hdots,F_s'$ of the elliptic fibration
 and  consider the unimodular lattice 
 $$U=\langle F'_1, O_S\rangle.
 $$
The root lattices $A_1\oplus E_7$ supported naturally on the singular fibres
 by omitting $\Theta_0, C_3$ as in \eqref{eq:Triv},
 embed into $U^\perp$ once we subtract $F'_1$ from the components $\Theta_6, C_1$ met by $O_S$:
 \[
 \Num(S) \supset U \oplus A_1 \oplus E_7.
 \]
One can  complement this sublattice to $\Num(S)$ uniquely by adding a vector $u$ which
is the sum of the generators of the discriminant groups $A_1^\vee/A_1$ and $E_7^\vee/E_7$.
Note that we may assume that $u^2=-2$ 
 by representing $u$ by the sum of dual vectors 
\begin{eqnarray*}
\label{eq:u}
u  & = & \;\;  (C_1-F_1')^\vee \; + \;\;\; (\Theta_6-F_1')^\vee\\
& = & -  \frac 12 (C_1-F_1') - \frac 12 (2\Theta_1+4\Theta_2+6\Theta_3+5\Theta_4+4\Theta_5+3(\Theta_6-F_1')+3\Theta_7)
\end{eqnarray*}

We are now in the position to prove the two statements of  Theorem \ref{thm:X_33}.

\begin{claim}
\label{claim:psi^2}
$\Psi^2$ is cohomologically trivial.
\end{claim}

\begin{proof}
Since $\Psi$ is numerically trivial, we have that $ \Psi (u) = u + \eta$ where $\eta\in\Pic(S)$ is a torsion element 
(of order $1$ or $2$).
 It follows then that $\Psi^2$ is cohomologically trivial, since $\Psi^2(u) = \Psi (u + \eta) = u + 2 \eta = u$. 
\end{proof}

\begin{claim}
\label{claim:DnotD}
$\Psi$ is not cohomologically trivial.
\end{claim}

\begin{proof}
We have to prove that $\Psi(u)\neq u$ in $\Pic(S)$.
To this end, we replace $u$ by an
auxiliary divisor supplied by the  following lemma:

\begin{lemma}
\label{lem:D=D_N}
There is an $N\in\ZZ$ with $N\leq m=s/2$ such that  
\[
D_N = u + O_S + \Theta_6 + NF_1'
\]
 satisfies $D_N\geq 0$ and $|D_N|=\{D_N\}$.
\end{lemma}

\begin{proof}
The divisor $D_N$  is effective for $N\geq m$ by Riemann--Roch,
using that $g(O_S)=m-1$ by Riemann--Hurwitz, so $O_S^2=-2$ by adjunction and $D_N^2=2(N-1)$,
while $D_N.K_S=O_S.K_S=2(m-1)$.
We claim that the  minimal $N\leq m$ such that $D_N\geq 0$ gives $|D|=\{D\}$.

To see this, consider  the following short exact sequence:
  $$ 0 \ra \hol_S (D_{k-1})\ra  \hol_S(D_{k})\ra  \hol_{F'_1} (D_{k})\ra 0.$$
 Since the cokernel is a line bundle of degree $1$ on the elliptic curve $F'_1$, 
  Riemann--Roch gives
  $h^0( \hol_{F'_1} (D_{k}))=1, $ 
  hence $$h^0(  \hol_S(D_{k})) - h^0(  \hol_S(D_{k-1})) \leq 1.$$
  Therefore there exists a minimal integer $ N \in \ZZ $ such that 
  $$h^0(  \hol_S(D_{N})) = 1, \  h^0(  \hol_S(D_{N-1}))=0.$$
as claimed, and this satisfies $N\leq m$ by the first argument.
\end{proof}

Coming back to the proof of Claim \ref{claim:DnotD}, 
we write $D=D_N$ as in Lemma \ref{lem:D=D_N}.
Then  $\Psi(u)= u$ in $\Pic(S)$ is equivalent to $\Psi(D)=D$ as divisors.

Since $D.F_1'=1$, the support of $D$ contains an irreducible bisection $C\neq O_S, R_S$, 
but it may still include some fibre components (possibly multiple times), i.e.\
\[
C = D - \sum_i \Theta_i.
\]
Regardless of the precise details, this implies that
\begin{eqnarray}
\label{eq:C<D}
C^2 \leq D^2, \;\;\; C.O_S \leq D.O_S = N-1 \;\;\; \text{ and } \;\;\; C.R_S\leq D.R_S.
\end{eqnarray}
We shall now compare $C$ against the fixed loci of $\Psi$ and of $\Psi^2$.
Starting with the latter, we have
\[
\mbox{Fix}({\Psi^2}) = \Theta_1 \cup \Theta_3 \cup \Theta_5 \cup O_S \cup R_S \cup U 
\]
where 
$U$ comprises two isolated points on each elliptic curve $F_i' \, (i=1,\hdots,s)$.
Since $C=\Psi^2(C)$ by Claim \ref{claim:psi^2},  $C$ intersects each $F_i'$ in one of these two points 
or in the intersection point with $O_S$ or $R_S$.

Switching to $\Psi$, the fixed locus becomes
\[
\mbox{Fix}(\Psi) = \Theta_3 \cup O_S \cup U_1 
\cup U_2
\]
where $U_1$ comprises the five remaining intersection points of  components of the reducible fibres
and $U_2 = R_S \cap (\Theta_7\cup F_1'\cup\hdots\cup F_{s}')$ consists of another $s+1$ points.

Assume that $\Psi(C)=C$. 
Since $\Psi$ interchanges the points in $U$,
the bisection $C$ can intersect each elliptic curve $F_i'$ only in  the intersection point with $O_S$ or $R_S$.
It follows that
\begin{eqnarray}
\label{eq:CBB}
C.(O_S+R_S)\geq s\geq2N.
\end{eqnarray}
 On the other hand, 
$R_S.u=0$ by the choice of $u$, so  $D.R_S=N$.

As above, it follows that $C.R_S\leq N$.
Together with $C.O_S\leq N-1$ from \eqref{eq:C<D}, this gives the required contradiction to \eqref{eq:CBB}.
Hence $\Psi(C)\neq C$, and equivalently $\Psi(u)\neq u$, as claimed.
\end{proof}

\subsection{Proof of Theorem \ref{thm:X_33}}

Note that we have just completed the proof of Theorem \ref{thm:X_33}:
$\Psi\in \Aut_\QQ(S)\setminus\Aut_\ZZ(S)$ while $\Psi^2\in\Aut_\ZZ(S)$.
\end{proof}

 \begin{remark}
 \label{rem:Enriques}
The above arguments remain valid for $s=2$ to give the one-dimensional family of Enriques surfaces 
with $\Aut_\QQ(S)\cong \ZZ/4\ZZ$ and $\Aut_\ZZ(S)\cong\ZZ/2\ZZ$
from \cite[Ex.\ 3]{MN84}.
\end{remark}

 \subsection{Proof of Theorem \ref{thm:iso}}
 \label{ss:pf_iso}

Theorem \ref{thm:X_33} implies that $|\Aut_\ZZ(S)|\leq 3$ as stated in Theorem \ref{thm:iso}.

Moreover, by varying the branch points with monodromy $\tau$ in the above construction,
we obtain, for $s>2$ even, $(s-1)$-dimensional families of properly elliptic surfaces with $s>2$ double fibres and 
$\Aut_\ZZ(S)\cong\ZZ/2\ZZ$, verifying another claim of Theorem \ref{thm:iso} (cf.\ Example \ref{ex:here-even}).

For the remaining claim, namely the existence of surfaces with $\Aut_\ZZ(S)\cong\ZZ/3\ZZ$,
we turn to the fibrations constructed in the proof of Proposition \ref{order3},
but in greater generality as evidenced by the following theorem
which completes  the proof of Theorem \ref{thm:iso}.
\qed

\begin{theorem}\label{order3Z}
There are infinitely many families of  isotrivial surfaces $S$ with group $G = \ZZ/3\ZZ \rtimes \mu_6$
and with $\Aut_{\ZZ}(S)$ of order $3$.
\end{theorem}

\begin{proof}
We let $C \ra \PP^1$ the Galois $G$-covering with monodromies 
$$(\s, \s^{-1}, \eta_1, \dots \eta_n), \ \s (z) : = \e z, \ \eta_j \in T : = \ZZ/3\ZZ \left(\frac{1 + \e}{3} \right), \ \sum_j \eta_j = 0.$$

$S$ is birational to $Y : = (C \times E)/G$, and the pluricanonical fibration $f\colon S \ra C/G$
has the first  singular fibre of type $\II$ (a cuspidal cubic) and the second one of type $\II^*$, that is, of type $\tilde{E_8}$.

The other  singular fibres are $ 3 F'_1, \dots, 3 F'_n$, where each $F'_j$ is again isomorphic to
the Fermat   elliptic curve.

The only singular points of $Y$ lie in the first two fibres of the fibration over $C/G$
(with monodromies $\sigma, \sigma^{-1}$) and in the
first and third fibres of the projection onto $E/G $, i.e.\ those with branching indices $6$ and $2$,
while there are no singular points over the point of $E/G $ with branching index $3$.

The first fibre of the rational fibration of $S \dasharrow E/G$ consists of the proper transform $\Phi_0$ of
 the first  fibre of 
the  fibration $ Y \ra E/G$, counted with multplicity $6$, and the last curves of the configurations of type $A_5$, 
respectively $A_2$, coming from the resolution of the singular points of $Y$ lying in the second fibre 
of $S \ra C/G$ and in the first fibre of $S \dasharrow E/G$. 

We remove   in the configuration of type $A_5$  the $(-2)$ curve  which  intersects $\Phi_0$, 
and we obtain from the $\II^*$ fibre a configuration of type $E_8$,
intersecting $\Phi_0$ only on $\Theta$, the last component of $A_2$ configuration
(appearing with multiplicity $2$ on the fibre).

Now, $S$ has $ p_g(S) = q(S)=0$, hence the second Betti number $b_2(S) = 10$, and $\Num(S)$
contains the  direct sum $ E_8 \oplus U'$, 
where $U'$ is the lattice generated by $ F'_1$ and $\Phi_0$, which is unimodular because 
$$ F'_1 \cdot \Phi_0= 1$$
by inspection of their multiplicities in $Y$.
Since $E_8$ is unimodular, and $F'_1$ is orthogonal to $E_8$, we can subtract from $\Phi_0$ 
the dual vector
$\Theta^\vee \in E_8^\vee =E_8$ so that $U'' = \langle F'_1, \Phi_0  -\Theta^\vee \rangle$ is still unimodular and now 
orthogonal to $E_8$.
Hence we get an orthogonal direct sum:
$$ \Num(S) =  E_8 \oplus U''.$$
Since
$\Psi$ acts trivially on $H_1(S,\ZZ)$ and also on $F'_1, \Phi_0$ and on $E_8$,
we infer that  $\Psi$ is cohomologically trivial.
\end{proof}

\subsection{Case $r=3$}

The reader may wonder whether it may not be easier to use the special isotrivial families with $r=3$
to produce examples with $\Aut_\ZZ(S)\cong\ZZ/3\ZZ$. 
For completeness, we discuss this case in full as well:

 \begin{theorem}\label{r=3isotrivial}
 \label{thm:X_44}
 For each  isotrivially fibred properly elliptic surface $S$ with $q(S)=p_g(S)=0$
and  $r=3$, the group $\Aut_{\ZZ}(S)$ is trivial.
 \end{theorem}

\begin{proof}

By Theorem \ref{thm:iso}, we have   $|\Aut_{\ZZ}(S)|\leq 3$. Presently with $r=3$, 
if equality holds, 
then $\mu_3$ centralizes $G$ 
by Corollary \ref{cor:Z(G)}, 
hence 
$G = T \times \mu_3$ 
 by Lemma \ref{lem:3cases} 
where $T=\ZZ/3\ZZ$.

We take generators $\s \in \mu_3$ such that $\s (z) = \e z$, and $\tau$ such that $\tau (z) = z + \eta$, where $ \eta =  \frac{1-\e}{3}$.

 If $\Aut_{\ZZ}(S)$ has order three,
 then $\Aut_\ZZ(S)=\Aut_{B,\ZZ}(S)$ (by Corollary \ref{cor: rational J(S) AutSB}) shows that
we can assume that it is generated  by $\Psi$,  the automorphism of $S$  induced by $\id \times \s$ on $C \times E$.

By  Lemma \ref{Factorization} (iv), (v) and 
 Lemma \ref{mon-restriction}, we may take
   $C$ to be  the  $G$-covering of  $\PP^1$ branched in $2 + n_1 + n_2 = 2 + n  $ points, $x_1, x_2, \xi_1, \dots, \xi_{n_1}, 
   \xi_{n_1+1 }, \dots, \xi_n$,  
  with local monodromies
$$ (\s, \s^{-1}, \tau, \dots, \tau, \tau^{-1}, \dots, \tau^{-1})$$
where $3\mid(n_1-n_2)$.

On $E$, the elements with fixed points are $\s, \s' : = \s \tau, \s'' : = \s \tau^2$ (the last is the inverse of  $\s^{-1}  \tau$)
and their inverses:
each has a triple of fixed points, mapping to
three points $y_1, y_2, y_3$ of $E/G = \PP^1$, which are the three branch points, with local monodromies $(\s, \s', \s'')$.

The quotient
$Y = (C \times E)/G$ has three singular points of type $\frac{1}{3} (1,1)$ lying over $(x_1, y_1)$
and  three singular points of type $A_2$ lying over $(x_2, y_1)$.

As in Subsection \ref{ss:geom}, let $Z$ be the minimal resolution of singularities of $Y$. 
Then
the fibre of $Z$ over $x_2$ contains three $A_2$ configurations, hence it is a fibre of type $\IV^*$.
Meanwhile the fibre over $x_1$ is not minimal, since the proper transform $\tilde F$ of the fibre of $Y$ over $x_1$ is a $(-1)$-curve as displayed in Figure \ref{fig:exc}.;
contracting it, we get a fibre of type $\IV$, with three components $\De_0, \De_1, \De_2$ meeting in a point $P \in S$.

Through this point $P$ pass all the fibres of the second projection 
(the rational map $\pi: S \dasharrow E/G$).
We denote by $F'_1, \dots, F'_n$ the divisors such that $ 3 F'_j$ is the fibre of $f$ over $\xi_j$, while 
$\Phi_2$ is such that $3 \Phi_2$ is  the fibre of $\pi$ over $y_2$.

Since $|G|=9$, it follows that $F' : = F'_1, \Phi : = \Phi_2$ generate a unimodular odd sublattice $U'\subset\Num(S)$;
moreover $e(S)=12 \Rightarrow b_2(S) = 10$, and we choose the divisors  $\De'_1 : = \De_1 - F', \De'_2 : = \De_2 - F'$
 to generate an
$A_2$ lattice, and  since the fibre of type $\IV^*$ consists of a central curve $\sX$ meeting $A'_1, A'_2, A'_3$,
with tails $A''_1, A''_2, A''_3$ we omit $A''_2$ in order to obtain an $E_6$ lattice.
 To make it orthogonal to $U'$ we replace $\sX$ by $\sX' : = \sX - F'$ as in the proof of Theorem \ref{thm:X_33}.

 Both $A_2$ and $E_6$ have  discriminant group $\ZZ/3\ZZ$, hence the orthogonal direct sum 
 
 $$ \Lambda' = U' \oplus A_2 \oplus E_6 \subset \Lambda = \Num (S) = H^2(S, \ZZ)/\Tors $$
 has index three.
 
 The choice of $\Lambda$ amounts to the choice of an isotropic subgroup $\cong \ZZ/3\ZZ$ in the discriminant group
 of the non unimodular lattice $U' \oplus A_2 \oplus E_6$,  which is the orthogonal direct sum $\ZZ/3\ZZ \oplus \ZZ/3\ZZ$.
 
 We take as generator of the discriminant group $(A_2)^{\vee} / A_2$ the linear form $v_1$ dual to $\De'_2$,
 which is represented by the divisor $- \frac{1}{3} ( \De'_1 + 2 \De'_2)$.
 
 Whereas we take as generator of the discriminant group $(E_6)^{\vee} / E_6$ the linear form $v_2$ dual to $A''_1$,
 which is represented by the divisor 
 $$- \frac{1}{3} ( 4 A''_1 + 5 A'_1 + 6 \sX' + 3 A'_2 + 4 A'_3 + 2 A''_3).$$
 
 An easy calculation shows that $v_1^2 = - \frac{2}{3}, v_2^2 = - \frac{4}{3}$,
 hence all such isotropic subspaces are given by the  multiples of $(v_1, v_2)$ or of  $(-v_1, v_2)$.
 
 One sees however that the second choice amounts to exchanging $\De_1$ with $\De_2$.
 Hence we go for the first choice, and let $v$ denote the cohomology class in $\Lambda$ corresponding to the pair 
 $(v_1, v_2)$ (hence $v^2 = -2$).
 
 Note that $\Tors(S) \cong(\ZZ/3\ZZ)^{n-1}$, generated by  $F'_1 - F'_2, F'_1 - F'_3, \dots, F'_1 - F'_n$.  

 \begin{lemma}
 \label{lem:D=D}
 There is an $N\in\ZZ$ with $N\leq n-1$ such that  
 $$ D : = D_N : =  v + \Phi +  N F'$$
 satisfies $D\geq 0$ and $|D|=\{D\}$.
 \end{lemma}

 \begin{proof}
 We first prove that  for $N=n-1$ we get $D_{n-1} \geq 0$. To see this, we observe some easy formulae, the first following from Kodaira's canonical bundle formula \eqref{eq: canonical bundle formula}, 
 $$ K_S = 2 (F'_1 + \dots +  F'_n) - F \sim (2n-3) F', \;
 \ v^2 = -2, \; v_1.\Phi =  v_2.\Phi =  v.\Phi =
 F'.v = 0,$$
 hence
 $$ D_{n-1}^2 = (2n-3)  = K_S.D_{n-1}.$$
 
 By Riemann-Roch (since $\chi(S)=1$) we obtain $ h^0(D_{n-1}) + h^0(K_S-D_{n-1}) \geq 1$.
 
 Since $ (K_S-D_{n-1}) F = -3$ and $F$ is nef, we conclude that $h^0(K_S-D_{n-1})=0$, 
 hence $D_{n-1}$ is  
  an effective divisor.

 To arrange for the second claim, it suffices to find some $N\leq n-1$ such that $h^0(D_N)=1$.
 This is ensured as in Lemma \ref{lem:D=D_N}.
%
%
%
%
 \end{proof}

%

 Note that $D$ is not irreducible, since we have
 \begin{eqnarray}
 \label{eq:DD} 
 D .\De_j = j, j = 0,1,2, \ D .\sX = 1,  D. A''_1 =1, D. A''_2 = -1, 
 \end{eqnarray}
 while for the other components of the fibre of type $\IV^*$ the intersection number is $0$.
 
 Hence $D $ contains $A''_2  + A'_2 $ as subdivisor, and $D'  : = D - A''_2  - A'_2 $ 
 has intersection numbers all equal to zero with the components of the fibre of type $\IV^*$,
 except for $D'  .A'_2  = D'   .A''_1 =1.$ 
 
 By our choice of the minimality of $N$ the vertical  part of $D' $ does not contain $F'$, and a fortiori 
 no full fibre.
 
 \begin{lemma}\label{ct}
 Either $D' $ is irreducible, 
 or it is a sum of effective divisors $D'   =   D_0 +  \sE + \De +  F''$,
 where $D_0$ is irreducible, $F''$ is a sum of submultiple fibres $F'_j \, (j>1)$, $\sE$ is supported on the fibre of type $\IV^*$, 
 but is not equal to a multiple of this fibre,
 and similarly $\De$  is supported on the fibre of type $\IV$, but is not equal to a multiple of this other fibre.

 \end{lemma} 
 
\begin{proof}[Proof of the Lemma]
 
 Since the surface $S$ has multiple fibres, hence does not have a section, the horizontal component $D_0$ of $D' $ (which is a trisection  different from $\Phi$ (otherwise $v\in\Lambda'$))
  is irreducible. Each other component is vertical, hence it is either a submultiple fibre different from $F'_1$,
  or  is contained in  the two fibres with singular support.
 Here the components (with multiplicities) never sum up to a full fibre by the minimal choice of $N$ in defining $D$.
\end{proof}

\begin{claim}
\label{claim:Psi(D)}
 $\Psi (D_0) \neq D_0$.
 \end{claim}
 
 \subsection*{Conclusion of proof of  Theorem \ref{r=3isotrivial}}
  Given this, since $|D_0| = \{ D_0\}$,
 we conclude that $\Psi (D_0)$ is not linearly equivalent to $D_0$: 
 since $q(S)=0$,
 we can conclude that  $\Psi$ is not cohomologically trivial, thus proving Theorem \ref{r=3isotrivial}.
 \end{proof}

\begin{proof}[Proof of Claim \ref{claim:Psi(D)}]

Assume in fact that  $\Psi (D_0) = D_0$: then the sets $D_0\cap \Delta_j,  D_0 \cap A'_j$ and $ D_0 \cap A''_j$ are  left invariant by $\Psi$. 
It will be sufficient to argue at the fibre of type $\IV$,
but a similar argument works at the $\IV^*$ fibre.

By Lemma \ref{ct}, there are 3 alternatives for the support of $D'$ on the $\IV$ fibre:
\[
\Delta = 0, \;\;\; \Delta = \Delta_0, \;\;\; \Delta=\Delta_0+\Delta_1.
\]

Because, in the cases $\De_1, \De_2, \De_1 + \De_2$ the intersection number of $D_0$ with $\De_0$ would be negative, a contradiction. Similarly we can exclude the case where the support equals $\De_0 + \De_2$, since then the intersection
number of $D_0$ and $\De_1$ would be negative.

In each case, we read off that one component is disjoint from $D_0$ (which therefore  does not meet the node of the fibre)
and the other two fibre components are met with multiplicity one resp.\ two.
Up to symmetry, it thus suffices to study the case from \eqref{eq:DD} with $D'$ and  $D_0$ having the same intersection numbers
$ D' .\De_j = D_0 .\De_j = j$.

It remains to use that $\Psi$ acts on each component $\Delta_j$ with exactly two fixed points,
namely the node and one other point $P_j$, say.
To see this, note that the curve $C_0$ contracted to a point in $S$ is fixed pointwise under $\Psi$
(since it meets the strict transforms of the $\Delta_j$ in three distinct points),
hence the $\Delta_j$ cannot be fixed pointwise (this follows also from an easy local calculation).

From $\Psi (D_0) = D_0$, we thus infer that $D_0\cap\Delta_j = \{P_j\}$ for $j=1,2$,
with tangential intersection at $P_2$ which we shall now use to establish a contradiction.

Take local coordinates $(x, t)$   such that $t$ is the lift of a local coordinate on the base curve $B$, where $\Psi$
acts as the identity.

Hence $\Psi^* (t) = t$, while $\Psi^* (x) = \e x$, for $\e$ a generator of $\mu_3$.

If $D_0$ is smooth and has the same tangent as $\Delta_2 = \{ t=0\}$.
then $D_0 = \{ (x,t) | t = F(x)\}$. By $\Psi$-invariance, $F(x)$ contains only powers of $x$ with exponent
divisible by $3$, hence $(D_0 . \Delta_2 )_{P_2}  \geq 3$, a contradiction.

If instead $D_0$ has a double point, its quadratic part is an eigenvector, hence either $t^2$, or $tx$, or $x^2$.
In the first case the equation of $D_0$, modulo the ideal generated by $t$, contains only powers of $x$ with exponent
divisible by $3$, hence $D_0 . \Delta_2  \geq 3$, the same  contradiction.

In the second case the other terms, modulo $t$, contain $x$ with exponent congruent to $1$ modulo $3$,
and at least $3$, hence $D_0 . \Delta_2  \geq 4$,  contradiction.

In the third case the equation of $D_0$ contains only monomials where $x$ has exponent congruent to $2$ 
modulo $3$, hence the equation is divisible by $x^2$ and $D_0$ is not irreducible and reduced.

Obviously $D_0$ cannot have multiplicity $\geq 3$ at $P_2$, hence,
all in all, we have derived the required contradiction to the assumption $\Psi(D_0)=D_0$; 
this completes the proof of Claim \ref{claim:Psi(D)}.
\end{proof}

We conclude this section with an easy consequence beyond the isotrivial case,
(thus improving on Theorem \ref{thm5} (i) in a special setting):

\begin{cor}
\label{cor:non-ss}
If $S\to B$ is a properly elliptic surface with an additive fibre, then $|\Aut_\ZZ(S)|\leq 3$.
Moreover, equality can only hold if $J(S)=X_{22}$ (and thus $p_g(S)=0$).
\end{cor}

\begin{proof}
If $J(S)$ is not rational, then $\Aut_\ZZ(S)=\Aut_\QQ(S)=\{\id_S\}$
by Corollary \ref{cor: pg>0 add fib 2}.
If $J(S)$ is rational, then $\Aut_\ZZ(S)=\Aut_{B,\ZZ}(S)$ by Corollary \ref{cor: pg=0 Aut_Z}.
Hence the bound $|\Aut_{B,\QQ}(S)|\leq 2$ from Proposition \ref{prop: pg=0 bd J(S)} (2) leaves us with
 the four special isotrivial cases with $J(S)=X_{22}, X_{33}, X_{44}$ or $X_{11}(\lambda)$
for which the upper bound $|\Aut_\ZZ(S)|\leq 3$ of  Theorem \ref{thm:iso} applies.

Then, by Proposition \ref{prop: pg=0 bd J(S)} (3) (b) there remain only the cases $r=6, r=3$. 
But the case $r=3$ is excluded by Theorem \ref{thm:X_44}, hence $r=6$ and we have only the case $J(S)=X_{22}$ as stated.
\end{proof}

\section{Proof of Theorem \ref{thm:abelian}}
\label{s:ab}

If $S$  has $\chi(\mathcal O_S)>0$, 
then the statement of Theorem \ref{thm:abelian} that $\Aut_\QQ(S)$ is abelian and 2-generated
(hence also $\Aut_\ZZ(S)$) follows from the results of this paper:
\begin{itemize}
\item
if $p_g>0$, then the statement follows from Theorem \ref{thm} (i);
\item
if $p_g=0$, this follows from Proposition \ref{prop: pg=0 bd J(S)} (1), (2)
except for the four special isotrivial  surfaces from Table \ref{table:special};
\item
if the fibration is isotrivial, then there is an additive fibre and,  as stated in the proof of 
(2) of Proposition~\ref{prop: pg=0 bd J(S)}, 
  the homomorphism $\Aut_{\QQ}(S)\rightarrow \Aut_{\QQ}(J(S))$ of Lemma~\ref{lem: Jacobian} is injective.

For the four special surfaces,  $\Aut_{\QQ}(J(S)) \cong \CC^*$ by \cite[Table3]{DM22}, so we are done.
\end{itemize}

If $S$  has $\chi(\mathcal O_S)=0$, we write $S$ as a free quotient of a product,
$S=(C\times E)/G$. 
Then \cite[Thm 1.2]{CFGLS24} states explicitly that 
$\Aut_\ZZ(S)$ is abelian except for a special case where 
$G\cong\ZZ/2m\ZZ$ for odd $m$ acts by translations on $E$ and $\Aut_\ZZ(S)/\Aut^0(S)\cong\ZZ/2\ZZ=\langle\imath\rangle$.
But here, the action of $\Aut^0(S) = E$ is induced by translations on the second factor of $C\times E$
while $\imath$ can only be induced by an involution on the first factor, possibly composed with translation by a 2-torsion point on $E$.
In either case, these automorphisms commute on $C\times E$ and thus also on $S$.
\qed

\begin{remark}
For  $\Aut_\QQ(S)$,
 there exist non-abelian examples in the case $\chi=0$, as one
 can show that any finite subgroup $H$ of $\Aut(\PP^1)$ can be realized as $\Aut_\QQ(S)$.

It suffices to define $C$ as  the fibre product of $B:= \PP^1 \ra B' : = \PP^1 /H$ and of a general ramified double cover 
$ C' \ra B' = (\PP^1 /H)$;
letting $\iota$ be the involution of $C$  induced by the identity  of $B$ and the covering  involution of $C'$,
$H \times \langle \iota \rangle $ acts on $C$. We let $E = \CC/\ZZ \oplus \tau \ZZ$ be an elliptic curve, and $S : = (C \times E)/(\ZZ/2\ZZ)$,
where $\ZZ/2\ZZ$ acts freely via $(x,z) \mapsto (\iota(x), z + 1/2)$. Then $H\times  \{\id_E\}$ induces an inclusion $H < \Aut_\QQ(S)$  by (1) of Lemma \ref{lem:Psi_Y}.
\end{remark}

\subsection*{Acknowledgements} 

We thank John Voight for start-up aid regarding Magma. The second author would like to thank Professor Jin-Xing Cai for many helpful discussions on the topic of numerically trivial automorphisms.
We are grateful to the referee for numerous comments and corrections which helped us improve the paper.

\end{document}